\numberwithin{equation}{section}
\setlist[itemize,1]{label=$\bullet$}
\setlist[itemize,2]{label=$\triangleleft$}
\setlist[enumerate,1]{label=(\roman*)}
\setlist[enumerate,2]{label=(\arabic*)}
\definecolor{TUIl-orange}{RGB}{255, 121, 0}
\definecolor{TUIl-titleblue}{RGB}{0, 68, 121}
\definecolor{TUIl-textblue}{RGB}{0, 51, 88}
\definecolor{TUIl-green}{RGB}{0, 116, 122}
\definecolor{TUIl-grey}{RGB}{165, 165, 165}
\algnewcommand\algorithmicinput{\textbf{Input:}}
\algnewcommand\AlgInput{\item[\algorithmicinput]}
\algnewcommand\algorithmicoutput{\textbf{Output:}}
\algnewcommand\AlgOutput{\item[\algorithmicoutput]}
\newtheoremstyle{dotless}{}{}{\itshape}{}{\bfseries}{}{ }{}
\newtheoremstyle{no-italic}{}{}{}{}{\bfseries}{}{ }{}
\theoremstyle{dotless}
\newtheorem{Theorem}{Theorem}[section]
\newtheorem{Example}[Theorem]{Example}
\newtheorem{Lemma}[Theorem]{Lemma}
\newtheorem{Definition}[Theorem]{Definition}
\newtheorem{Assumption}{Assumption}
\newtheorem{Remark}[Theorem]{Remark}
\newtheorem{Proposition}[Theorem]{Proposition}
\newtheorem{Corollary}[Theorem]{Corollary}
\newtheorem{Test Instance}[Theorem]{Test Instance}
\theoremstyle{no-italic}
\newcommand*{\R}{\mathbb{R}}
\newcommand*{\N}{\mathbb{N}}
\def\smallunderbrace#1{\mathop{\vtop{\m@th\ialign{##\crcr
   $\hfil\displaystyle{#1}\hfil$\crcr
   \noalign{\kern3\p@\nointerlineskip}%
   {\scriptsize\upbracefill}\crcr\noalign{\kern3\p@}}}}\limits}
\DeclareMathOperator{\cl}{cl}			
\DeclareMathOperator{\argmina}{argmin^{I}}	
\DeclareMathOperator{\argminb}{argmin^{II}} 
\DeclareMathOperator{\argmin}{argmin}
\DeclareMathOperator{\wargmin}{wargmin}	
\def\R{{\mathbb R}}
\def\N{{\mathbb N}}
\def\B{{\mathbb B}}
\def\dom{\textup{dom }}
\def\WMin{\textup{WMin}}
\def\Min{\textup{Min}}
\def\Int{\textup{int }}
\def\bd{\textup{bd }}
\def\cl{\textup{cl }}
\def\conv{\textup{conv }}
\def\gph{\textup{gph }}
\title{A Vectorization Scheme for\\ Nonconvex Set Optimization Problems}
\author{Gabriele Eichfelder\thanks{Institute of Mathematics, Technische Universität Ilmenau, Po 10 05 65, D-98684 Ilmenau, Germany,
{\texttt{\{gabriele.eichfelder,ernest.quintana-aparicio,stefan.rocktaeschel\}@tu-ilmenau.de}}}, Ernest Quintana\footnotemark[1] , Stefan Rocktäschel\footnotemark[1] }
\date{}
\begin{document}

\maketitle
\begin{abstract}
In this paper, we study a solution approach for  set optimization problems with respect to the lower set less relation. This approach can serve as a base for numerically solving set optimization problems by using established solvers from multiobjective optimization. Our strategy consists of deriving a parametric family of multiobjective optimization problems whose optimal solution sets  approximate, in a specific sense, that of the set-valued problem with arbitrary accuracy. We also examine particular classes of set-valued mappings for which the corresponding set optimization problem is  equivalent to a multiobjective optimization problem in the generated family. Surprisingly, this includes set-valued mappings with a convex graph. 
\end{abstract}

\noindent {\small\textbf{Keywords:}} set optimization, set approach, nonconvex sets, approximation algorithm, multiobjective optimization

\vspace{2ex} \noindent {\small\textbf{Mathematics subject
classifications (MSC 2010):}}
90C48,  
90C29, 
90C59 


\section{Introduction}
\label{section:intro}

In set optimization one considers the minimization of set-valued  mappings  with  a  partially ordered image space. Problems of this type have received a lot of attention during the past few decades due to its wide number of both theoretical and practical applications. For example, from a theoretical point of view, it generalizes multiobjective optimization problems \cite{Jahn2011} and naturally arises in bilevel optimization \cite{Pil2016} and optimization under uncertainty \cite{idekobis2014}. On the other hand, it also has a direct impact in finance \cite{FeinsteinZacharyRudloff2015} and socio-economics \cite{Bao9,NeukelNorman2013}. We refer the reader to \cite{KTZ} for an in-depth overview of this research area.

\paragraph{Literature Review.}
In this paper, we are interested in the computational solution of set optimization problems for which the solution concepts are based on the so called set approach. A common feature of these solution concepts is that they are based on predefined order relations in the class of all nonempty subsets of the image space of the set-valued objective mapping. Here it is worth mentioning the so called lower set less  and upper set less relations.  Given its generality, the design of numerical methods for solving this type of problems is a challenging research topic. In the literature, different algorithmic approaches have already been studied. For unconstrained problems, derivative free strategies have been considered in \cite{jahn2015desc, jahn2018tree, KK2016}. The idea of these methods is to find a descent direction of the set-valued objective mapping among the points in a precomputed subset of the unit sphere, and then to  use a suitable line search procedure to improve the solution candidate.
These methods have in common that they deliver only one optimal solution, while in set optimization, similar to multiobjective optimization, the optimal solution set---and even the cardinality of optimal values---is in general infinite. 
The case in which the set-valued problem has a finite feasible set has been analyzed in \cite{guntherkobispopovici2019, guntherkobispopovici2019part2,  kobiskuroiwatammer2017, kobistam2018}. The main feature of these procedures is the use of clever comparison strategies of the images of the set-valued mapping that, in practice, avoids the comparison of every possible pair of sets.  Other solution methods considered exploit the particular structure of the set-valued objective mapping. In particular, for those that arise in multiobjective optimization under uncertainty,  scalarization methods have been studied in \cite{EhrgottIdeSchobel2014,eichfeldernieblingrocktaschel2019,  idekobis2014, IdeKobisKuroiwa2014, jiangcaoxiong2019, schmidtschobelthom2019}, a first order descent method was considered in \cite{bouzaquintanatammer2020}, and a branch and bound scheme has been examined in \cite{eichfeldernieblingrocktaschel2019}. 

In contrast to the approaches previously mentioned, in this paper we are particularly concerned with vectorization strategies. By this, we mean any procedure that makes use of some kind of relaxation of the original set optimization problem that replaces it with a finite-dimensional multiobjective optimization problem for which numerical solution methods are already available.
Research in this direction is, to the best of our knowledge, more limited in the literature. In fact, we are only aware of two references in this case, see \cite{eichfelderrocktaeschel2021, lohneschrage2013}. The algorithm in \cite{lohneschrage2013} is designed for problems on which the graph of the set-valued objective mapping is polyhedral and the solution concept is the one associated to the lower set less relation.   The more recent work \cite{eichfelderrocktaeschel2021} assumes that the images of the set-valued mapping are convex and compact, and  deals with both the upper and lower set less relations. The idea there is to consider a discretized version of a vectorization result derived in \cite{Jahn2015A}, which shows the equivalence (in a specific sense) of set optimization problems and a corresponding infinite dimensional vector optimization problem. This results in a parametric family of multiobjective problems whose solution sets approximate that of the set optimization problem, and the quality of this approximation is controlled by the fineness of the discretization.                                                                                        

\paragraph{Contributions.}
The main contribution of this paper is a vectorization scheme for solving general nonconvex set optimization problems with the lower set less relation. Inspired by the solution concepts based on the so called vector approach  we construct, for any natural number, a corresponding multiobjective optimization problem. Thus, by doing this, we obtain a parametric family of multiobjective optimization  problems with the natural numbers as the set of parameters. Thereby, the vector approach is obtained as a particular case. The results derived for the proposed scheme are of the following two types:
\begin{itemize}
    \item \emph{Approximation properties.} We show that the optimal  solution sets of our multiobjective subproblems give inner approximations to that of the set optimization problem, and that these approximations are monotone increasing with respect to the parameter. Furthermore, we prove that the quality of the solutions obtained by these inner approximations are good enough in a specific sense. At the same time, outer approximations are provided by means of the approximate solutions of the multiobjective subproblems. As a consequence, we are able to establish that the optimal  solution set of the set optimization problem can be approximated with arbitrary accuracy by those of the multiobjective optimization  problems in our parametric family. 
    
    \item \emph{Finiteness of the vectorization scheme.} We study different classes of set-valued mappings for which the corresponding set optimization problem is equivalent to one of our multiobjective optimization  subproblems. In this case, the inner approximation determined by the multiobjective optimization problem is already the full set of optimal solutions of interest. In a specific topological sense, we are able to show that the class of set-valued mappings satisfying this property is not small, and that it includes different types usually considered in the literature.  In particular, we show that this is the case if the set-valued mapping has polytopal values, or values with a finite number of minimal elements, or a convex graph.
\end{itemize}

Given the guaranteed approximation properties mentioned above, we can replace the set optimization problem with one of our multiobjective optimization subproblems for a specific parameter. The subproblems have a linear objective function and inclusion constraints with respect to the graph of the set-valued objective mapping. Thus, our approach is computationally friendly in the sense that no extra complexity is added to the multiobjective subproblems, other than those that already come from the description of the set-valued objective mapping.

A clear advantage of our method over those in the literature is that we do not require the convexity of the images of the set-valued mapping (as in \cite{eichfelderrocktaeschel2021, lohneschrage2013}), and we avoid making expensive computational comparisons between sets (as required in \cite{jahn2015desc, jahn2018tree, KK2016, guntherkobispopovici2019, guntherkobispopovici2019part2,  kobiskuroiwatammer2017, kobistam2018}). 

\paragraph{Organization.}
The paper is organized as follows: Section \ref{section:prelims} introduces the main concepts and results that set the stage of the work. In Section \ref{section:vectorization}, we introduce our vectorization scheme and study the main connections to set optimization problems. These connections then motivate the study of classes of set-valued mappings for which our vectorization is exact in a specific sense. This is precisely the content of Section \ref{section:fdvp}. We conclude in Section \ref{section:Conclusions} with some final remarks.

\section{Preliminaries}
\label{section:prelims}

We start this section by clarifying the notation of the paper. First, for a set $A \subseteq \R^m$, we denote its interior, closure, boundary, convex hull and cardinality by $\Int A$, $\cl A$, $\bd A$, $\conv A,$ and $|A|,$ respectively.  As usual, the elements in $\R^m$ are considered as column vectors, and we denote the transpose operator with the symbol $\top.$  However, we sometimes abuse this notation when we consider vectors that are formed by other vectors together. Thus, for example, given  $x \in \R^n$ and $y \in \R^m,$ we may write $(x,y)$ instead of $(x^\top,y^\top)^\top.$ We also denote by $0_{m\times n}$ the zero matrix with $m$ rows and $n$ columns, and by $\mathcal{I}_m$ the identity matrix of dimension $m.$  Furthermore, we denote the standard Euclidean norm in $\R^m$ by $\|\cdot\|$ and its corresponding unit ball by $\mathbb{B}.$ Moreover, for a set-valued mapping $F:\R^n \rightrightarrows \R^m$ and $A \subseteq \R^n,$ we denote by $F(A)$ the set $\bigcup_{x \in A} F(x).$  Finally, for $p\in \N,$ we set $[p]:=\{1,\ldots,p\}.$ 

%

Recall that $K \subseteq \R^m$ is a cone if  for all
$t\geq 0$ and $y \in K$ it holds $t y\in K,$ and that a cone $K$ is convex if $K + K = K,$ pointed if $K\cap (-K)=\{0\},$ and solid if $\Int K \neq \emptyset.$ For a cone $K,$ we denote with $K^*$ the dual cone defined as
$$
K^*:=\{v \in \R^m \mid \forall\; y\in K: v^\top y\geq 0\}.
$$
For a solid cone 
$K \subseteq \R^m$  and $y^1,y^2 \in \R^m$
we define  binary relations by
\[\begin{array}{rcl}
y^1\preceq_K y^2: &\Longleftrightarrow &y^2-y^1\in K, \\
y^1\precneq_K y^2: &\Longleftrightarrow &y^2-y^1\in K\setminus\{0\},\\
y^1\prec_K y^2: &\Longleftrightarrow &y^2-y^1\in \Int K\\
\end{array}
\]
We  also write $y^1 \npreceq_K y^2$ and $y^1 \nprec_K y^2$ if the inequalities $y^1\preceq_K y^2$ and $y^1\prec_K y^2$ are not satisfied, respectively.
%
%
%
%
%

If the cone $K$ in the previous definition is convex and pointed, it is well known that $\preceq_K$ defines a partial order in $\R^m,$ see, for example,  \cite{Jahn2011}. By using the partial ordering introduced by a convex and pointed cone $K$ we can define  minimal and weakly minimal elements of a nonempty 
set $A \subseteq \R^m$. For the weakly minimal elements we need to assume additionally that the cone $K$ is solid. 
The set of minimal elements of $A$ with respect to $K$ is then defined as
$$\Min (A,K):= \{y \in A\mid \left(y -  K\right)\cap A =\{y\}\},$$
and  the set of weakly minimal elements of $A$ with respect to $K$ is defined as
$$\WMin (A,K):= \{y \in A\mid \left(y- \Int K\right)\cap A =\emptyset\}.$$ 
\begin{Proposition}(\cite[Theorem $3.2.9$]{SawaragiNakayamaTanino1985})\label{prop:domination property}
Let $A \subseteq \R^m$ be nonempty and compact, and suppose that $K$ is closed, convex, and pointed. Then, $A$ satisfies the so called domination property (or external stability property) with respect to $K$, that is, $\Min (A,K)\neq \emptyset$ and $$A + K = \Min(A,K) + K.$$
\end{Proposition}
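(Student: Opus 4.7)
The plan is to prove the slightly stronger inclusion $A \subseteq \Min(A,K) + K$, from which both conclusions follow at once: picking any $a \in A$ yields $\Min(A,K) \neq \emptyset$, and the nontrivial inclusion $A + K \subseteq \Min(A,K) + K$ is an immediate consequence (the reverse inclusion is trivial since $\Min(A,K) \subseteq A$ and $K + K = K$).

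The central tool is a scalarization argument. Since $K$ is closed, convex, and pointed, one can select a functional $\xi \in K^*$ that is strictly positive on $K \setminus \{0\}$, i.e., $\xi^\top k > 0$ for every $k \in K \setminus \{0\}$ (this corresponds to $\xi$ lying in the interior of $K^*$, which is nonempty in finite dimension precisely because $K$ is closed, convex, and pointed). Fixing such $\xi$, and given an arbitrary $a \in A$, I would consider the slice
\[
B_a := (a - K) \cap A.
\]
Since $K$ is closed and $A$ is compact, $B_a$ is compact, and it is nonempty as $a \in B_a$. The continuous linear functional $y \mapsto \xi^\top y$ therefore attains its minimum on $B_a$ at some $y^* \in B_a$.

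The next step is to argue $y^* \in \Min(A,K)$. Suppose, for contradiction, that some $z \in A$ satisfies $y^* - z \in K \setminus \{0\}$. Writing $a - z = (a - y^*) + (y^* - z)$ and using $y^* \in a - K$ together with $K + K = K$ (from convexity of $K$), one gets $a - z \in K$, hence $z \in B_a$. But strict positivity of $\xi$ then forces $\xi^\top z = \xi^\top y^* - \xi^\top (y^* - z) < \xi^\top y^*$, contradicting the choice of $y^*$. Therefore $y^* \in \Min(A,K)$, and since $a \in y^* + K$, the inclusion $A \subseteq \Min(A,K) + K$ follows.

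The main delicate point is the existence of the strictly positive functional $\xi$; this is where all three hypotheses on $K$ (closed, convex, pointed) enter essentially, via a standard separation-type result ensuring $\Int K^* \neq \emptyset$. If one wished to avoid this, an alternative route is a Zorn's lemma argument on $B_a$ partially ordered by $\preceq_K$: compactness of $B_a$ together with closedness of $K$ guarantees that every chain has a $\preceq_K$-lower bound in $B_a$, yielding a $\preceq_K$-minimal element of $B_a$, which is then promoted to a minimal element of $A$ by the same $K + K = K$ argument used above.
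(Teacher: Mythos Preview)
The paper does not supply its own proof of this proposition; it is quoted from \cite[Theorem~3.2.9]{SawaragiNakayamaTanino1985} without argument. Your proposal is therefore not comparable to a proof in the paper, but it is a correct and self-contained justification. The scalarization step is sound: for a closed convex cone $K$ in $\R^m$, pointedness of $K$ is equivalent to $\Int K^* \neq \emptyset$, which yields the strictly positive functional $\xi$; the section $B_a = (a-K)\cap A$ is compact because $a-K$ is closed and $A$ is compact; and the contradiction argument showing $y^* \in \Min(A,K)$ uses only $K+K=K$ and strict positivity of $\xi$. The Zorn's-lemma alternative you sketch is also standard and works equally well, avoiding the existence of $\xi$ at the cost of invoking choice. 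Either route recovers the cited result, so your write-up can stand in for the reference if a self-contained proof is desired.
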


Let now $\Omega \subseteq \R^n$ be nonempty, and consider a vector-valued  function $f:\R^n \rightarrow \R^m.$ Furthermore, let $K\subseteq \R^m$ be a closed, convex, pointed, and solid cone. The multiobjective optimization problem associated to this data is then  
\begin{equation}\label{eq:vpg}\tag{$\mathcal{VP}$}
\begin{array}{rl}
\preceq_K\textrm{-}&\min\limits_{x} \; f(x) \\
&\;\textup{s.t.} \;\; x \in \Omega,\\
\end{array}  
\end{equation} 
and its (approximate) optimal solutions are understood in the sense of Definition \ref{def:esolutions vp} below. As we will see later in the paper, we can relate (approximate) optimal solutions of a specific multiobjective optimization problem to the optimal solution set of our set optimization problem under consideration.

\begin{Definition}[\cite{Kutateladze1979}]\label{def:esolutions vp}
Let $K\subseteq \R^m$ be a closed, convex, pointed, and solid cone, and suppose that an element $e \in \Int K$ has been fixed.  Furthermore, let $\varepsilon\geq 0,$ and consider a point $\bar{x} \in \Omega.$   
\begin{enumerate}
    \item We say that $\bar{x}$ is an $\varepsilon$-weakly minimal solution of $\eqref{eq:vpg}$ if 
        \begin{equation}
        \nexists \; x\in \Omega: f(x)\prec_K f(\bar{x}) - \varepsilon e.
        \end{equation}
        When $\varepsilon = 0,$ we just say that $\bar{x}$ is a weakly minimal solution of  \eqref{eq:vpg}. The set of $\varepsilon$-weakly minimal solutions is denoted by $\varepsilon\textrm{-}\wargmin \eqref{eq:vpg}$ for $\varepsilon >0,$ and by  $\wargmin \eqref{eq:vpg}$ for $\varepsilon = 0.$ 
    \item We say that $\bar{x}$ is an $\varepsilon$-minimal solution of $\eqref{eq:vpg}$ if
        \begin{equation}
        \nexists \; x\in \Omega: f(x) \precneq_K f(\bar{x}) - \varepsilon e.
        \end{equation}
        When $\varepsilon = 0,$ we just say that $\bar{x}$ is a minimal solution of  \eqref{eq:vpg}. The set of  $\varepsilon$-minimal solutions is denoted by $\varepsilon \textrm{-}\argmin \eqref{eq:vpg}$ for $\varepsilon >0,$ and by  $\argmin \eqref{eq:vpg}$ for $\varepsilon = 0.$ 
\end{enumerate}
\end{Definition}

The remaining of this section is devoted to the definition of the set optimization problem we consider in the paper. A main ingredient in this definition is the use of so called set relations in order to compare the images of the set-valued objective mapping when determining an optimal solution. Thus, we begin by establishing those that are used  in the paper. 
We refer the reader to \cite{jahnha2011} and the references therein for an overview of other existing set relations.

\begin{Definition}\label{def-set-relation}
Let $K \subseteq \R^m$ be a closed, convex, pointed, and solid cone. The following binary relations are defined on the power set of $\R^m:$
\begin{enumerate}
    \item $\forall\; A,B \subseteq \R^m:  A\preceq^\ell_K B: \Longleftrightarrow B\subseteq A+K,$
    \item $\forall\; A,B \subseteq \R^m:  A\precneq^\ell_K B: \Longleftrightarrow B\subseteq A+K\setminus \{0\},$
    \item $ \forall\; A,B \subseteq \R^m:  A\prec^\ell_K B: \Longleftrightarrow B\subseteq A+ \Int K.$ 
\end{enumerate}
For $A,B \subseteq \R^m,$ we write $A\npreceq^\ell_K B$ and $A\nprec^\ell_K B$ if the inequalities $A\preceq^\ell_K B$ and $A\prec^\ell_K B$ are not satisfied, respectively.
\end{Definition}

The binary relation $\preceq^\ell_K$ defined above is the so called lower set less relation associated to the cone $K$ and it has been widely used in the literature, see for instance \cite{KTZ} and the references therein. One can also observe that the relations $\precneq^\ell_K$ and $\prec^\ell_K$ constitute, respectively, strong and strict versions of it. Moreover, note that
$$A\preceq^\ell_K B \Longleftrightarrow \forall \; b \in B,\; \exists\; a \in A: a \preceq_K b.$$ Thus, intuitively, comparing sets with respect to this set relation corresponds to comparing their best elements with respect to the partial order $\preceq_K.$ It is well known that $\preceq^\ell_K$ is a preorder, but in general it fails to be antisymmetric, see \cite[Proposition 3.1]{jahnha2011}. Likewise, $\precneq^\ell_K$ and $\prec^\ell_K$ are transitive  but not necessarily reflexive or antisymmetric.

Next, we recall some standard concepts from set-valued and convex analysis that will be important for our main results. For a set-valued mapping $F:\R^n \rightrightarrows \R^m$, the domain and graph of $F$ are defined, respectively, by 
$$\dom F: = \{x\in \R^n \mid F(x)\neq \emptyset\} \textrm{ and } \gph F: = \{(x,y) \in \R^n\times \R^m \mid y \in F(x)\}.$$
Moreover, we say that $F$ is locally bounded at $\bar{x}$ if there exists $\alpha>0$ and a neighborhood $U$ of $\bar{x}$ such that
$F\left(U\right) \subseteq \alpha \mathbb{B}$. 
%

Unless otherwise stated, throughout the rest of this paper we work with the following setup:
\begin{Assumption}
Let $\Omega \subseteq \R^n$ be nonempty and closed, and $F: \R^n \rightrightarrows \R^m$ be a given set-valued mapping such that $\Omega\subseteq  \dom F$ and $F(x)$ is compact for every $x \in \Omega.$ Furthermore, let  $K \subseteq \R^m$ be a closed, convex, pointed, and solid cone, and $e \in   \Int K$ be a given element.
\end{Assumption} 
For a discussion on solidness of cones and possible replacement concepts we refer to the recent paper \cite{KhazayelFarajzadehGuenther2021}. 
Now, under these assumptions, we consider the set optimization problem:
\begin{equation}\label{sp}\tag{$\mathcal{SP}$}
\begin{array}{rl}
\preceq^\ell_K\textrm{-}&\min\limits_{x} \; F(x) \\
                        &\;\textup{s.t.} \;\; x \in \Omega.\\
\end{array}  
\end{equation} 
The solutions of \eqref{sp} are understood in the following sense:
\begin{Definition}
Let $\varepsilon \geq 0$ and consider a point $\bar{x} \in \Omega.$  
\begin{enumerate}
   \item We say that $\bar{x}$ is an $\varepsilon$-weakly minimal solution of $\eqref{sp}$ if 
        \begin{equation*}
        \nexists \; x\in \Omega: F(x)\prec^\ell_K F(\bar{x}) - \varepsilon e.
        \end{equation*}
        When $\varepsilon = 0,$ we just say that $\bar{x}$ is a weakly minimal solution of  \eqref{sp}. The set of $\varepsilon$-weakly minimal solutions is denoted by $\varepsilon\textrm{-}\wargmin \eqref{sp}$ for $\varepsilon >0,$ and by  $\wargmin \eqref{sp}$ for $\varepsilon = 0.$ 
    \item We say that $\bar{x}$ is a type-one $\varepsilon$-minimal solution of $\eqref{sp}$ if
        \begin{equation*}
        \forall \; x\in \Omega: F(x) \preceq^\ell_K F(\bar{x}) - \varepsilon e \Longrightarrow F(\bar{x}) - \varepsilon e \preceq^\ell_K F(x).
        \end{equation*}
        When $\varepsilon = 0,$ we just say that $\bar{x}$ is a type-one minimal solution of  \eqref{sp}. The set of type-one $\varepsilon$-minimal solutions is denoted by $\varepsilon\textrm{-}\argmina \eqref{sp}$ for $\varepsilon >0,$ and by  $\argmina \eqref{sp}$ for $\varepsilon = 0.$ 
    \item We say that $\bar{x}$ is a type-two $\varepsilon$-minimal solution of $\eqref{sp}$ if
        \begin{equation*}
        \nexists \; x\in \Omega: F(x) \precneq^\ell_K F(\bar{x}) - \varepsilon e.
        \end{equation*}
        When $\varepsilon = 0,$ we just say that $\bar{x}$ is a type-two minimal solution of  \eqref{sp}. The set of type-two $\varepsilon$-minimal solutions is denoted by $\varepsilon\textrm{-}\argminb \eqref{sp}$ for $\varepsilon >0,$ and by  $\argminb \eqref{sp}$ for $\varepsilon = 0.$ 
\end{enumerate}
\end{Definition}
In particular, we are interested in computing (or approximating) the sets $\wargmin \eqref{sp},$ $\argmina \eqref{sp},$ and $\argminb \eqref{sp}.$  The concepts involving $\varepsilon$ are only of approximate type, but also constitute an important tool in our analysis. It is worth mentioning that, concerning minimal solutions of set optimization problems, the traditional concept in the literature is that of type-one defined above. Our motivation for considering type-two minimal solutions comes from its applications in  multiobjective optimization  under uncertainty, where such a solution concept is introduced  for set-valued mappings with a particular structure, see \cite{idekobis2014, IdeKobisKuroiwa2014}. Moreover, from the beginning we will show that type-two minimal solutions have better approximation properties than those of type-one, see Proposition \ref{prop:ssvp subset sssp} and Example \ref{ex:t-one behave bad}. Therefore, in this paper we focus on the weakly minimal and type-two minimal solutions of \eqref{sp}.

In the following proposition we clarify the relationships between the different solution concepts.
\begin{Proposition}
Let $\varepsilon \geq 0$. It holds:
$$\varepsilon\textrm{-}\argmina \eqref{sp} \subseteq \varepsilon\textrm{-}\argminb \eqref{sp} \subseteq \varepsilon\textrm{-}\wargmin \eqref{sp}. $$
\end{Proposition}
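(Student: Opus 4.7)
The proposition splits into two inclusions, and the second one is essentially immediate while the first requires the domination property.

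For the second inclusion, the plan is to show the contrapositive. Suppose $\bar{x}$ fails to be $\varepsilon$-weakly minimal, so there exists $x\in\Omega$ with $F(x) \prec^\ell_K F(\bar{x}) - \varepsilon e$, meaning $F(\bar{x}) - \varepsilon e \subseteq F(x) + \Int K$. Since $K$ is pointed and solid, $0 \notin \Int K$ (otherwise some $\pm\delta e$ would both lie in $K$, violating pointedness), hence $\Int K \subseteq K\setminus\{0\}$. Substituting gives $F(\bar{x}) - \varepsilon e \subseteq F(x) + (K\setminus\{0\})$, i.e.\ $F(x) \precneq^\ell_K F(\bar{x}) - \varepsilon e$, so $\bar{x}$ also fails to be type-two $\varepsilon$-minimal.

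For the first inclusion, I will argue by contradiction. Assume $\bar{x} \in \varepsilon\textrm{-}\argmina\eqref{sp}$ but there is some $x\in\Omega$ with $F(x) \precneq^\ell_K F(\bar{x})-\varepsilon e$, i.e.\ $F(\bar{x})-\varepsilon e \subseteq F(x) + K\setminus\{0\}$. Since $K\setminus\{0\}\subseteq K$, this already yields $F(x) \preceq^\ell_K F(\bar{x}) - \varepsilon e$, and the type-one property forces $F(\bar{x})-\varepsilon e \preceq^\ell_K F(x)$, that is, $F(x) \subseteq F(\bar{x})-\varepsilon e + K$. I plan to combine these two inclusions to derive a contradiction with minimality.

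The key step is this combination. Pick any $y\in F(\bar{x})-\varepsilon e$. The first inclusion gives $z\in F(x)$ with $y - z \in K\setminus\{0\}$; the second inclusion then supplies $y' \in F(\bar{x})-\varepsilon e$ and $k'\in K$ with $z = y' + k'$. Therefore $y - y' = k' + (y-z) \in K$, and since pointedness of $K$ rules out $k'+(y-z)=0$ (this would force $y-z = -k' \in (-K)\cap K = \{0\}$, contradicting $y-z \neq 0$), we get $y - y' \in K\setminus\{0\}$. So every element of $F(\bar{x})-\varepsilon e$ strictly dominates some other element of the same set in the $\preceq_K$ order. Now since $F(\bar{x})$ is compact (by the standing assumption), so is $F(\bar{x})-\varepsilon e$, and Proposition~\ref{prop:domination property} guarantees a minimal element $y^*\in\Min(F(\bar{x})-\varepsilon e, K)$. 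Applying the above to $y=y^*$ produces an element $y'\neq y^*$ of $F(\bar{x})-\varepsilon e$ with $y^* - y' \in K\setminus\{0\}$, contradicting $(y^* - K)\cap (F(\bar{x})-\varepsilon e) = \{y^*\}$.

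The main obstacle, such as it is, lies in this last step: one must remember that antisymmetry fails for $\preceq^\ell_K$, so the type-one condition by itself is not enough to rule out strict comparisons — the contradiction only emerges by picking a $\preceq_K$-minimal element of the compact set $F(\bar{x})-\varepsilon e$ and exploiting pointedness. Everything else is unpacking the definitions.
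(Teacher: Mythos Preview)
Your proof is correct and follows essentially the same route as the paper's: for the first inclusion both arguments derive from the two set inclusions that $F(\bar{x})-\varepsilon e \subseteq F(\bar{x})-\varepsilon e + K\setminus\{0\}$ (you unfold this element-by-element, the paper writes it as a chain of set inclusions using $K + (K\setminus\{0\}) \subseteq K\setminus\{0\}$), and then invoke the domination property to obtain a minimal element that cannot be strictly dominated; the second inclusion is in both cases just $\Int K \subseteq K\setminus\{0\}$.
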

\begin{proof}
In order to see the first inclusion, suppose that for some point $\bar{x}\in \varepsilon\textrm{-}\argmina \eqref{sp}$ we have $\bar{x}\notin \varepsilon\textrm{-}\argminb \eqref{sp}$. Then, there exists $x\in \Omega$ such that $$F(\bar{x}) - \varepsilon e \subseteq F(x) + K\setminus\{0\} \subseteq  F(x) + K.$$ Because $\bar{x} \in \argmina \eqref{sp},$ we deduce that $F(x) \subseteq F(\bar{x}) - \varepsilon e + K .$ Therefore, $$F(\bar{x}) - \varepsilon e \subseteq F(x) + K\setminus\{0\} \subseteq F(\bar{x}) - \varepsilon e + K + K\setminus\{0\}.$$ Thus, in particular, as $K$ is pointed, we find that $F(\bar{x}) \subseteq F(\bar{x})+ K\setminus\{0\}$.  According to Proposition \ref{prop:domination property}, we now get the existence of $\bar{y}\in \Min (F(\bar{x}),K)$. However, since $F(\bar{x}) \subseteq F(\bar{x})+ K\setminus\{0\}$, it is possible to find an element $y\in F(\bar{x})$ with $y\precneq_K \bar{y}.$ This contradicts the minimality of $\bar{y}$.

The second inclusion is an immediate consequence of the definitions and the fact that $\Int K \subseteq K\setminus\{0\}.$
\end{proof}

We conclude the section with a result that establishes relationships between the set of minimal  
solutions of \eqref{sp} and their approximate counterparts. 

\begin{Proposition} \label{prop:lalita=wintersectionepsilon}
It holds:
\begin{enumerate}
    \item \label{item:wargmin=intersectionepsilon} $\wargmin \eqref{sp} = \bigcap\limits_{\varepsilon >0 } \varepsilon\textrm{-}\wargmin \eqref{sp}.$
    
    \item \label{item:argmin=intersectionepsilon} $\argminb \eqref{sp} \subseteq \bigcap\limits_{\varepsilon >0 } \varepsilon\textrm{-}\argminb \eqref{sp}.$
\end{enumerate}
\end{Proposition}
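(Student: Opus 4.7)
The plan is to split (i) into its two inclusions and dispatch (ii) via a direct algebraic argument.

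For the $\subseteq$ direction of (i), I would argue by contraposition: assume $\bar{x}$ fails to be $\varepsilon$-weakly minimal for some $\varepsilon>0$, so that some $x\in\Omega$ satisfies $F(\bar{x})-\varepsilon e\subseteq F(x)+\Int K$, equivalently $F(\bar{x})\subseteq F(x)+\varepsilon e+\Int K$. Since $\varepsilon e\in K$ and the standard inclusion $K+\Int K\subseteq \Int K$ holds for convex solid cones, this collapses to $F(\bar{x})\subseteq F(x)+\Int K$, i.e. $F(x)\prec^\ell_K F(\bar{x})$, contradicting $\bar{x}\in\wargmin \eqref{sp}$.

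The $\supseteq$ direction of (i) is the main obstacle, and it is the only place where the standing compactness of the values of $F$ is used. Assuming $\bar{x}\notin\wargmin \eqref{sp}$, some $x\in\Omega$ satisfies $F(\bar{x})\subseteq F(x)+\Int K$. Here the right-hand side is open (as the Minkowski sum of the compact set $F(x)$ and the open set $\Int K$), and $F(\bar{x})$ is a compact subset of it. A routine open-cover argument then supplies $\eta>0$ with $F(\bar{x})+\eta\mathbb{B}\subseteq F(x)+\Int K$. Choosing any $\varepsilon>0$ such that $\varepsilon\|e\|\leq\eta$, one obtains $F(\bar{x})-\varepsilon e\subseteq F(\bar{x})+\eta\mathbb{B}\subseteq F(x)+\Int K$, so that $\bar{x}\notin\varepsilon\textrm{-}\wargmin \eqref{sp}$ for that particular $\varepsilon$, and hence $\bar{x}$ lies outside the intersection.

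For part (ii) the argument should be purely algebraic. Suppose $\bar{x}\in\argminb \eqref{sp}$ but, for some $\varepsilon>0$, there exists $x\in\Omega$ with $F(\bar{x})-\varepsilon e\subseteq F(x)+K\setminus\{0\}$. I would invoke pointedness of $K$ to observe that $K+(K\setminus\{0\})\subseteq K\setminus\{0\}$, since $k_1+k_2=0$ with $k_1\in K$ and $k_2\in K\setminus\{0\}$ would force $k_2\in K\cap(-K)=\{0\}$. Combined with $\varepsilon e\in K$, this yields $F(\bar{x})\subseteq F(x)+\varepsilon e+K\setminus\{0\}\subseteq F(x)+K\setminus\{0\}$, contradicting $\bar{x}\in\argminb \eqref{sp}$. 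Hence $\bar{x}\in\varepsilon\textrm{-}\argminb \eqref{sp}$ for every $\varepsilon>0$, as required.
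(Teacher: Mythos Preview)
Your argument is correct in all three parts. The compactness argument for the $\supseteq$ direction in (i) is clean, the cone identities $K+\Int K\subseteq\Int K$ and $K+(K\setminus\{0\})\subseteq K\setminus\{0\}$ are exactly the right tools, and each step goes through as written.

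The paper's own treatment is different only in presentation, not in substance: for (i) it simply cites \cite[Theorem 4.2]{DhingraLalitha2017} rather than giving the argument, and for (ii) it states that the inclusion is ``immediate from the definition'' without spelling out the pointedness computation you provide. Your write-up therefore supplies a self-contained proof where the paper defers to the literature; this is a net gain in readability, and in particular your explicit use of the compactness of $F(\bar{x})$ in the $\supseteq$ direction of (i) makes transparent why the standing assumption on the values of $F$ is needed here.
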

\begin{proof}
For statement \ref{item:wargmin=intersectionepsilon}, see \cite[Theorem 4.2]{DhingraLalitha2017}. Statement \ref{item:argmin=intersectionepsilon} is immediate from the definition.
\end{proof}

\section{The Vectorization Scheme}
\label{section:vectorization}

This section is devoted to the formal description of our vectorization scheme and its main properties. With this in mind, we start by recalling the solution concepts for set optimization problems that are based on the so called vector approach  \cite[Section  2.6]{KTZ} as this delivers a motivation for our strategy. There, a pair $(\bar{x}, \bar{y})$ is said to be a (weak) minimizer of \eqref{sp} if and only if $(\bar{x},\bar{y})$ is a (weakly) minimal solution of the multiobjective optimization problem
\begin{equation}\label{vp1}
\begin{array}{rl}
\preceq_{K}\textrm{-} &\min\limits_{x,y} \; y \\
                      &\; \textup{s.t.} \; \;  y \in  F(x), \\
                      &  \qquad x \in \Omega.
\end{array} 
\end{equation} Note that, because we are mainly interested in optimal solutions in the decision space, the vector $y$ in \eqref{vp1} can be seen as a vector of slack variables. In the literature, connections between the solution concepts based on the vector and the set approach have already been studied, see \cite[Lemma 2.6 and Proposition 2.10]{HR2007}. There, it is established that
\begin{equation}\label{eq:minimizer is minimal}
    (\bar{x}, \bar{y}) \textrm{ is a weak minimizer of } \eqref{sp} \Longrightarrow \bar{x} \in \wargmin \eqref{sp}. 
\end{equation} 
This makes the process of finding solutions to \eqref{sp} by means of \eqref{vp1} an attractive computational idea. However, simple examples show that the converse implication in \eqref{eq:minimizer is minimal} does not hold and that, in general, the gap between the set of weakly minimal solutions and those that can be computed by solving \eqref{vp1} can be arbitrarily large. The main contribution of this paper is an attempt at remedying this limitation by introducing a generalization of problem \eqref{vp1} consisting of adding several slack vectors, instead of just one. Formally, for $p\in \N,$ we consider the multiobjective optimization problem
\begin{equation}\label{vpp}
\begin{array}{rl}
\preceq_{K^p}\textrm{-} & \min\limits_{x,y^1, \ldots,y^p} \; \begin{pmatrix}
y^1 \\ \vdots \\y^p
\end{pmatrix}   \tag{$\mathcal{VP}_p$} \\
  \\
                        & \quad \textup{s.t.} \;\; y^i  \in F(x) \textrm{ for each }  i \in [p], \\
                        & \qquad \quad x \in \Omega,
\end{array} 
\end{equation} where $K^p:= \prod_{i = 1}^p K \subseteq\R^{mp}.$ 

It is now easy to see that \eqref{vp1} is just \eqref{vpp} for $p = 1.$ This fact, together with statement \eqref{eq:minimizer is minimal}, clearly motivates a study on the relationships between \eqref{sp} and \eqref{vpp}. However, before proceeding, we make some additional clarifications. First, naturally, the approximate solutions of \eqref{vpp} are considered with respect to the vector  $e^p:= (e,  \ldots , e) \in \Int K^p.$ Moreover, for $\varepsilon > 0,$ we denote by $\varepsilon\textrm{-}\wargmin_x\eqref{vpp}$ the projection of the set $\varepsilon\textrm{-}\wargmin \eqref{vpp}$ onto $\R^n,$ that is,
$$\varepsilon\textrm{-}\wargmin_x\eqref{vpp}:= \{x' \in \R^n \mid \exists\; y^1,\ldots y^p \in F(x') : (x',y^1,\ldots, y^p) \in \varepsilon\textrm{-}\wargmin \eqref{vpp}\}.$$ Similarly,  we denote by $\varepsilon\textrm{-}\argmin_x\eqref{vpp}$ the projection of $\varepsilon\textrm{-}\argmin\eqref{vpp}$ onto $\R^n.$ Finally, for our analysis it will be helpful to introduce the vector-valued functional $f^p: \R^n \times \prod_{i = 1}^p \R^m \rightarrow \prod_{i = 1}^p \R^m $ and the set-valued mapping $F^p:\R^n \rightarrow \prod_{i = 1}^p \R^m$ defined respectively as 
$$\forall \;x \in \R^n ,y^1,\ldots,y^p \in \R^m:   f^p\left(x,y^1,\ldots,y^p\right) := \begin{pmatrix}
y^1 \\ \vdots \\y^p
\end{pmatrix}, \;\; F^p(x):= \prod\limits_{i = 1}^p F(x).$$ 

By using these maps we can rewrite  \eqref{vpp} as
$$
\begin{array}{rrl}
\preceq_{K^p}\textrm{-} &  \min\limits_{z} &f^p(z)\\ 
&\mbox{s.t.}&z\in \gph F^p.
\end{array}$$Our starting point is now a simple proposition that shows a monotonicity property of the solution sets of the  multiobjective optimization  problems \eqref{vpp} with respect to $p.$ 

\begin{Proposition}\label{prop:sset monot}
Let $p_1,p_2 \in \N$ and $\varepsilon \geq 0$ be given, and suppose that $p_1 \leq p_2.$ Then,
\begin{enumerate}
    \item \label{item:monot p} $\varepsilon\textrm{-} \wargmin_x \;(\mathcal{VP}_{p_1}) \subseteq \varepsilon\textrm{-} \wargmin_x \;(\mathcal{VP}_{p_2}),$
    \item \label{item:monot p for min} $ \varepsilon\textrm{-} \argmin_x \;(\mathcal{VP}_{p_1}) \subseteq \varepsilon\textrm{-} \argmin_x \;(\mathcal{VP}_{p_2}).$
\end{enumerate}
\end{Proposition}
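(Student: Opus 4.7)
The plan is to prove both inclusions by a direct ``padding'' argument: take a solution tuple of $(\mathcal{VP}_{p_1})$ and extend it to a tuple of length $p_2$ by copying one of its components, then show that any hypothetical dominator of the extended tuple would yield a dominator of the original.

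For part \ref{item:monot p}, I would start with $\bar{x}\in \varepsilon\textrm{-}\wargmin_x(\mathcal{VP}_{p_1})$, so there exist $\bar{y}^1,\dots,\bar{y}^{p_1}\in F(\bar{x})$ such that $(\bar{x},\bar{y}^1,\dots,\bar{y}^{p_1})\in \varepsilon\textrm{-}\wargmin(\mathcal{VP}_{p_1})$. I set $\bar{y}^i := \bar{y}^{p_1}$ for $i\in\{p_1+1,\dots,p_2\}$, so the extended tuple is feasible for $(\mathcal{VP}_{p_2})$. Suppose it were not $\varepsilon$-weakly minimal for $(\mathcal{VP}_{p_2})$; then one finds $(x,y^1,\dots,y^{p_2})$ feasible with $y^i\prec_K \bar{y}^i - \varepsilon e$ for every $i\in[p_2]$. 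The first $p_1$ of these strict inequalities, together with the feasibility of $(x,y^1,\dots,y^{p_1})$ for $(\mathcal{VP}_{p_1})$, immediately contradict the $\varepsilon$-weak minimality of $(\bar{x},\bar{y}^1,\dots,\bar{y}^{p_1})$. This closes part \ref{item:monot p}.

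The main obstacle is part \ref{item:monot p for min}, because for the relation $\precneq_{K^{p_2}}$ the strict component may sit in a newly added coordinate $i_0\in\{p_1+1,\dots,p_2\}$, and then the naive restriction to the first $p_1$ coordinates only gives a non-strict inequality. To handle this I would extend with a \emph{fixed} component, namely $\bar{y}^i := \bar{y}^1$ for $i\in\{p_1+1,\dots,p_2\}$; this symmetry is the crux. Suppose $(\bar{x},\bar{y}^1,\dots,\bar{y}^{p_2})\notin \varepsilon\textrm{-}\argmin(\mathcal{VP}_{p_2})$; pick a dominator $(x,y^1,\dots,y^{p_2})$, so $y^i\preceq_K \bar{y}^i-\varepsilon e$ for all $i\in[p_2]$ with strict inequality at some index $i_0$. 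If $i_0\in[p_1]$, restricting to the first $p_1$ coordinates contradicts minimality of $(\bar{x},\bar{y}^1,\dots,\bar{y}^{p_1})$. Otherwise $i_0\in\{p_1+1,\dots,p_2\}$, in which case $\bar{y}^{i_0}=\bar{y}^1$ gives $y^{i_0}\precneq_K \bar{y}^1-\varepsilon e$; combining this with the (non-strict) inequalities $y^j\preceq_K \bar{y}^j-\varepsilon e$ for $j=2,\dots,p_1$, the tuple $(x,y^{i_0},y^2,\dots,y^{p_1})$, which is feasible because all its slack components lie in $F(x)$, strictly dominates $(\bar{x},\bar{y}^1,\dots,\bar{y}^{p_1})$ in the sense of $\precneq_{K^{p_1}}$, again a contradiction.

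Routine verifications (feasibility of the extensions, the componentwise characterizations of $\prec_{K^p}$ and $\precneq_{K^p}$, and that the chosen padding element lies in $F(\bar{x})$) are straightforward and I would only mention them briefly. The proof is essentially two lines once the right extension is chosen; the entire subtlety is noticing that repeating $\bar{y}^1$ (rather than, say, $\bar{y}^{p_1}$) is what lets the strict-component argument survive in case 2 of part \ref{item:monot p for min}.
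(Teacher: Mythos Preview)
Your proposal is correct and follows essentially the same approach as the paper: extend the tuple by padding with copies of one slack vector, and in part \ref{item:monot p for min} exploit that the padding equals $\bar{y}^1$ so that a strict inequality in a new coordinate can be swapped into position $1$. The only cosmetic difference is that in part \ref{item:monot p} you pad with $\bar{y}^{p_1}$ while the paper pads with $\bar{y}^1$ throughout; since any padding works for the weak case, this is immaterial.
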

\begin{proof}
For the proof of \ref{item:monot p}, let $\bar{x} \in \varepsilon\textrm{-} \wargmin_x \;(\mathcal{VP}_{p_1}).$ Then, we can find $\{\bar{y}^1,\ldots,\bar{y}^{p_1}\} \subseteq F(\bar{x})$ such that $$(\bar{x},\bar{y}^1,\ldots,\bar{y}^{p_1}) \in \varepsilon\textrm{-} \wargmin \;(\mathcal{VP}_{p_1}).$$ It is then easy to check that the vector where we add $p_2-p_1$ copies of $\bar y^1$, i.e., $\bar z:=
(\bar{x},\bar{y}^1,\ldots,\bar{y}^{p_1},\bar{y}^1,\ldots, \bar{y}^1)$ is an element of  $\varepsilon\textrm{-} \wargmin \;(\mathcal{VP}_{p_2}),$ from which we deduce that $\bar{x} \in \varepsilon\textrm{-} \wargmin_x \;(\mathcal{VP}_{p_2}).$

For the proof of \ref{item:monot p for min}, let $\bar{x} \in \varepsilon\textrm{-} \argmin_x \;(\mathcal{VP}_{p_1}).$ Then, we can find $\{\bar{y}^1,\ldots,\bar{y}^{p_1}\} \subseteq F(\bar{x})$ such that $(\bar{x},\bar{y}^1,\ldots,\bar{y}^{p_1}) \in \varepsilon\textrm{-} \argmin \;(\mathcal{VP}_{p_1}).$ 
Now let $\bar z\in \gph F^{p_2}$ be defined as in part \ref{item:monot p} of the proof and
assume that there exists $z=(x,y^1,\ldots,y^{p_1},\ldots,y^{p_2})$ with $z\in \gph F^{p_2}$ such that $ f^{p_2}(z) \precneq_{K^{p_2}} f^{p_2}(\bar{z})- \varepsilon e^{p_2}$.
This implies that $y^i \preceq_K \bar{y}^i - \varepsilon e$ for all $i\in [p_1]$. Furthermore, it is $y^{p_1+i}\precneq_K \bar{y}^1- \varepsilon e$ for an $i\in [p_2-p_1]$ or $y^j\precneq_K \bar{y}^j - \varepsilon e$ for a $j\in [p_1]$. In the fist case it is $f^{p_1}(x,y^{p_1+i},y^2,y^3,\ldots, y^{p_1}) \precneq_{K^{p_1}} f^{p_1}(\bar{x},\bar{y}^1,\ldots,\bar{y}^{p_1})-  \varepsilon e^{p_1}$ and in the second case it is $f^{p_1}(x,y^1,\ldots,y^{p_1}) \precneq_{K^{p_1}} f^{p_1}(\bar{x},\bar{y}^1,\ldots,\bar{y}^{p_1})-  \varepsilon e^{p_1}$. In both cases this contradicts $(\bar{x},\bar{y}^1,\ldots,\bar{y}^{p_1}) \in \varepsilon\textrm{-} \argmin \;(\mathcal{VP}_{p_1})$.
\end{proof}
The next result shows that a property similar to the one in \eqref{eq:minimizer is minimal} still holds for 
\eqref{vpp}.
\begin{Proposition}\label{prop:ssvp subset sssp}
Let $p \in \N$  and $\varepsilon \geq 0$ be given. Then, 
\begin{enumerate}
    \item \label{item:monot epsilon} $\varepsilon\textrm{-} \wargmin_x \eqref{vpp} \subseteq \varepsilon\textrm{-} \wargmin \eqref{sp},$
    \item \label{item:monot epsilon for min} $\varepsilon\textrm{-} \argmin_x \eqref{vpp} \subseteq \varepsilon\textrm{-} \argminb \eqref{sp}.$
\end{enumerate}
\end{Proposition}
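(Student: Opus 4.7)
Both statements will be proved by contrapositive: assume $\bar{x}$ violates the set-valued condition on the right-hand side, and manufacture a tuple that violates the vector-valued condition on the left-hand side. The key bridging device is that a certifying tuple $(\bar{x},\bar{y}^1,\ldots,\bar{y}^p)$ for $\bar{x}\in \varepsilon\textrm{-}\wargmin_x\eqref{vpp}$ or $\varepsilon\textrm{-}\argmin_x\eqref{vpp}$ is explicitly available by definition of the projection; the $x$ produced from the set-valued violation serves as the first coordinate of the contradicting tuple, and the remaining coordinates $y^1,\ldots,y^p$ are selected from $F(x)$ by using, componentwise, the set inclusion supplied by the corresponding $\ell$-relation.

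\textbf{Part (i).} Suppose for contradiction that $\bar{x}\in \varepsilon\textrm{-}\wargmin_x\eqref{vpp}\setminus \varepsilon\textrm{-}\wargmin\eqref{sp}$. Pick a tuple $(\bar x,\bar y^1,\ldots,\bar y^p)\in \varepsilon\textrm{-}\wargmin\eqref{vpp}$ and, from the failure of $\varepsilon$-weak minimality for $\eqref{sp}$, some $x\in \Omega$ with $F(\bar x)-\varepsilon e \subseteq F(x)+\Int K$. Applying this inclusion to each $\bar y^i\in F(\bar x)$, I can choose $y^i\in F(x)$ with $\bar y^i-\varepsilon e - y^i\in \Int K$ for every $i\in[p]$. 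Since $\Int K^p=(\Int K)^p$, the tuple $(x,y^1,\ldots,y^p)\in \gph F^p$ satisfies $f^p(x,y^1,\ldots,y^p)\prec_{K^p} f^p(\bar x,\bar y^1,\ldots,\bar y^p)-\varepsilon e^p$, contradicting the weak $\varepsilon$-minimality of $(\bar x,\bar y^1,\ldots,\bar y^p)$.

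\textbf{Part (ii).} The argument is almost identical but with $\Int K$ replaced by $K\setminus\{0\}$. Assume $\bar x\in \varepsilon\textrm{-}\argmin_x\eqref{vpp}\setminus \varepsilon\textrm{-}\argminb\eqref{sp}$ and pick $(\bar x,\bar y^1,\ldots,\bar y^p)\in \varepsilon\textrm{-}\argmin\eqref{vpp}$ together with $x\in \Omega$ satisfying $F(\bar x)-\varepsilon e \subseteq F(x)+K\setminus\{0\}$. For each $i\in[p]$ select $y^i\in F(x)$ with $\bar y^i-\varepsilon e - y^i\in K\setminus\{0\}$. The concatenated difference lies in $\prod_{i=1}^p(K\setminus\{0\})$, which is contained in $K^p\setminus\{0\}$ since each component is in $K$ and nonzero. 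Hence $f^p(x,y^1,\ldots,y^p)\precneq_{K^p} f^p(\bar x,\bar y^1,\ldots,\bar y^p)-\varepsilon e^p$, contradicting the $\varepsilon$-minimality of $(\bar x,\bar y^1,\ldots,\bar y^p)$ in $\eqref{vpp}$.

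\textbf{Expected difficulty.} Neither part poses a real obstacle; the only fiddly point is choosing the correct ``strength'' of the ordering cone in the product space, namely verifying $\Int K^p=(\Int K)^p$ in part~(i) and $\prod_{i=1}^p(K\setminus\{0\})\subseteq K^p\setminus\{0\}$ in part~(ii), so that the componentwise selection from $F(x)$ yields a single violating tuple in the product order rather than only componentwise violations.
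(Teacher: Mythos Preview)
Your proof is correct and follows essentially the same contrapositive argument as the paper: pick a certifying tuple for $\bar{x}$, use the failure of the set-valued optimality to obtain $x\in\Omega$, and then componentwise select $y^i\in F(x)$ to build a tuple contradicting $\varepsilon$-(weak) minimality in \eqref{vpp}. The paper disposes of part~(ii) with ``the proof is analog,'' whereas you explicitly record the product-cone facts $\Int K^p=(\Int K)^p$ and $\prod_{i=1}^p(K\setminus\{0\})\subseteq K^p\setminus\{0\}$, which is a nice clarification but not a different approach.
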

\begin{proof}
For the proof of \ref{item:monot epsilon}, let us fix $p \in \N$ and consider an element $\bar{x} \in \varepsilon\textrm{-} \wargmin_x \eqref{vpp}.$ Again, there exists $\{\bar{y}^1,\ldots,\bar{y}^p\} \subseteq F(\bar{x})$ such that $ (\bar{x},\bar{y}^1,\ldots,\bar{y}^p ) \in \varepsilon\textrm{-} \wargmin \eqref{vpp}.$  Suppose now that $\bar{x} \notin \varepsilon\textrm{-}\wargmin \eqref{sp}.$ Then, we can find  $x \in \Omega$ for which the inequality $F(x) \prec^\ell_K F(\bar{x})- \varepsilon e$ is satisfied. Thus, in particular, we have that $$F(x) \prec^\ell_K\{\bar{y}^1,\ldots,\bar{y}^{p_1}\} - \varepsilon e.$$ This implies 
the existence of $y^1,\ldots,y^p \in F(x)$ such that 
$$\forall\; i \in [p]: y^i \prec_K \bar{y}^i - \varepsilon e,$$ from which we deduce that $f^p\left(x,y^1,\ldots,y^p\right) \prec_{K^p} f^p\left(\bar{x},\bar{y}^1,\ldots,\bar{y}^p\right) - \varepsilon e^p.$ This contradicts the supposed $\varepsilon$-weak minimality of $(\bar{x},\bar{y}^1,\ldots,\bar{y}^p)$ for \eqref{vpp}. The proof of \ref{item:monot epsilon for min} is analog.
\end{proof}

The following example shows that a statement like the one in Proposition \ref{prop:ssvp subset sssp} \ref{item:monot epsilon for min} does not hold for type-one minimal solutions.
\begin{Example}\label{ex:t-one behave bad}
Let $\Omega=[0.25,0.5]$, $Y=\R^2$, $K=\R^2_+$ and consider $F: \R \rightrightarrows \R^2$ given by $F(x):= 
      \conv \left\lbrace (1,0)^\top, (0,1)^\top, (x,x)^\top \right\rbrace   \textrm{ if } x\in \Omega,
$ and $F(x)=
      \emptyset$   if  $x \notin \Omega$.
Then, it is easy to check that $ \argmina \eqref{sp}=\{ 0.25 \} $ and $\argminb \eqref{sp}=\Omega$. 

However, for every $p\in\N$ it is $(0.5,  (1,0)^\top, \ldots,  (1,0)^\top)\in \argmin \eqref{vpp}$ (with $p$ copies of $(1,0)^\top$). Hence, we have $0.5\in \argmin_x \eqref{vpp}\setminus \argmina \eqref{sp}$ for all $p\in\N,$ and therefore  $\argmin_x \eqref{vpp} \nsubseteq \argmina \eqref{sp} .$
\end{Example}
\begin{Proposition}
Let $p \in \N$. Then, the following statements hold:
\begin{enumerate}
    \item For every $\bar{x} \in \wargmin_x \eqref{vpp}$ there exists $\bar y^1,\ldots,\bar y^p\in$
    $\Min(F(\bar{x}),K)$ such that 
    \begin{equation*}
     (\bar{x},\bar{y}^1,\ldots,\bar{y}^p ) \in \wargmin \eqref{vpp}.
    \end{equation*} Conversely, suppose that $(\bar{x},\bar{y}^1,\ldots,\bar{y}^p ) \in \wargmin \eqref{vpp}.$ Then, for some $i \in [p]$ we have $\bar{y}^i \in \WMin(F(\bar{x}),K).$
    \item For every $\bar{x} \in  \argmin_x \eqref{vpp}$ there exists $\bar y^1,\ldots,\bar y^p\in \Min(F(\bar{x}),K)$ such that 
    \begin{equation*}
     (\bar{x},\bar{y}^1,\ldots,\bar{y}^p ) \in  \argmin \eqref{vpp}.
    \end{equation*} Conversely, if $(\bar{x},\bar{y}^1,\ldots,\bar{y}^p ) \in \argmin \eqref{vpp},$ then $\bar y^1,\ldots,\bar y^p\in
    \Min(F(\bar{x}),K).$
\end{enumerate}
\end{Proposition}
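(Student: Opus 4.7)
The plan is to treat the four claims separately, using two main tools: the domination property (Proposition~\ref{prop:domination property}), valid since each $F(\bar{x})$ is compact and $K$ is closed, convex, pointed; and the elementary inclusion $K+\Int K\subseteq \Int K$ that follows from convexity of $K$.

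For the first part of (i), I would start from $\bar{x}\in\wargmin_x\eqref{vpp}$ to extract witnesses $y^1,\ldots,y^p\in F(\bar{x})$ for which $(\bar{x},y^1,\ldots,y^p)$ is weakly minimal. The domination property produces, for each $i\in[p]$, an element $\bar{y}^i\in\Min(F(\bar{x}),K)$ with $\bar{y}^i\preceq_K y^i$. To verify that the replaced tuple is still weakly minimal, I argue by contradiction: any admissible $(x,z^1,\ldots,z^p)$ with $z^i\prec_K\bar{y}^i$ for all $i$ would, via $y^i-z^i=(y^i-\bar{y}^i)+(\bar{y}^i-z^i)\in K+\Int K\subseteq \Int K$, satisfy $z^i\prec_K y^i$ for every $i$, contradicting the weak minimality of the original tuple.

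The converse of (i) and the converse of (ii) are both direct contradictions using $x=\bar{x}$. If none of $\bar{y}^1,\ldots,\bar{y}^p$ were in $\WMin(F(\bar{x}),K)$, then picking dominators $z^i\in F(\bar{x})$ with $z^i\prec_K\bar{y}^i$ would give an admissible strict dominator $(\bar{x},z^1,\ldots,z^p)\in\gph F^p$ for \eqref{vpp}, contradicting weak minimality. Similarly, if some $\bar{y}^j\notin\Min(F(\bar{x}),K)$, choosing $y'\in F(\bar{x})$ with $y'\precneq_K\bar{y}^j$ and replacing only the $j$-th slot produces a tuple whose $f^p$-value differs from $f^p(\bar{x},\bar{y}^1,\ldots,\bar{y}^p)$ by a vector that is zero in every slot except the $j$-th, where it lies in $K\setminus\{0\}$; this yields a $\precneq_{K^p}$-dominator and contradicts minimality.

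The first statement of (ii) is then immediate: any witnesses $y^1,\ldots,y^p$ arising from $\bar{x}\in\argmin_x\eqref{vpp}$ automatically satisfy $y^i\in\Min(F(\bar{x}),K)$ by the converse just proved, so one takes $\bar{y}^i=y^i$. I expect the only delicate point, and hence the main obstacle to write cleanly, to be the inequality propagation used in the first part of (i): one must combine a strict inequality $z^i\prec_K\bar{y}^i$ with a weak inequality $\bar{y}^i\preceq_K y^i$ to conclude strict inequality $z^i\prec_K y^i$, which genuinely requires the inclusion $K+\Int K\subseteq\Int K$ rather than merely the transitivity of $\preceq_K$.
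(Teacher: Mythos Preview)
Your proposal is correct and follows essentially the same approach as the paper: domination property to push witnesses down to $\Min(F(\bar{x}),K)$, and direct contradictions with $x=\bar{x}$ for the converses. The only minor deviation is your shortcut for the first claim of (ii), where you invoke the already-proved converse of (ii) to see that the original witnesses $y^i$ already lie in $\Min(F(\bar{x}),K)$, whereas the paper simply says the argument is analogous to (i); your route is slightly cleaner and avoids repeating the domination-and-replace step.
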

 
\begin{proof}
 $(i)$ If $\bar{x} \in \wargmin_x \eqref{vpp},$ by the definition we can find $\{y^1,\ldots,y^p\} \subseteq F(\bar{x})$ such that  $(\bar{x},y^1,\ldots,y^p ) \in \wargmin \eqref{vpp}.$ Furthermore, since $F(\bar{x})$ is compact, we can apply Proposition \ref{prop:domination property} to obtain the existence of $\{\bar{y}^1,\ldots,\bar{y}^p\} \subseteq F(\bar{x})$ such that, for all $ i \in [p],$ it holds $\bar{y}^i \preceq_K y^i.$ It is then straightforward to verify that $ (\bar{x},\bar{y}^1,\ldots,\bar{y}^p ) \in \wargmin \eqref{vpp}.$ 

 Suppose now that $(\bar{x},\bar{y}^1,\ldots,\bar{y}^p ) \in \wargmin \eqref{vpp},$ and that $$\{\bar{y}^1, \ldots, \bar{y}^p\} \cap \WMin(F(\bar{x}),K) = \emptyset.$$ Then, for each $i \in [p]$ we can find $\hat{y}^i \in F(\bar{x})$ such that $\hat{y}^i \prec_K \bar{y}^i.$  However, this would imply  that $f^p (\bar{x},\hat{y}^1,\ldots,\hat{y}^p) \prec_{K^p}f^p(\bar{x},\bar{y}^1,\ldots,\bar{y}^p),$ a contradiction to the weak minimality of $(\bar{x},\bar{y}^1,\ldots,\bar{y}^p)$ for \eqref{vpp}. 

 $(ii)$ The proof of the first part of the statement is analog to that in $(i)$. Suppose next that $(\bar{x},\bar{y}^1,\ldots,\bar{y}^p ) \in \argmin \eqref{vpp}$ and that, for some $i \in [p],$ we have $\bar{y}^i \notin \Min(F(\bar{x}),K).$ Then, there exists $\hat{y}^i$ such that $\hat{y}^i \precneq_K \bar{y}^i.$ It is then possible to check that $f^p(\bar{x},\bar{y}^1,\ldots, \hat{y}^i, \ldots, \bar{y}^p) \precneq_{K^p} f^p(\bar{x},\bar{y}^1,\ldots, \bar{y}^i, \ldots, \bar{y}^p),$ a contradiction.  
\end{proof}

 Before continuing we recall that, for a bounded  set $A \subseteq \R^m$ and $\varepsilon > 0,$ the so called $\varepsilon$-external covering number of $A$  is defined by 
\begin{equation*}
    n_\varepsilon(A) := \min\left\{|B| \mid B \subseteq \R^m, \; |B| < +\infty, \; A \subseteq B + \varepsilon\mathbb{B} \right\}.
\end{equation*} This concept has proven to be a useful tool in other areas of research, including both geometric analysis and machine learning,  see for example \cite[Chapter 4]{AvidanGianMilman2015} and \cite[Chapter 27]{shwartzben-david2014}, respectively. It is well known that $n_\varepsilon$ is well defined and finite for each bounded $A$, and that it is monotone increasing with respect to inclusion, that is, $A \subseteq B$ implies $n_\varepsilon(A) \leq n_\varepsilon(B).$  

 We are now ready to state the main result of the section. It establishes that it is possible to approximate with arbitrary precision the set of (weakly) minimal solutions of \eqref{sp} by those of \eqref{vpp}.
 
\begin{Theorem}\label{thm:relations vectorization scheme}
The following relationships hold:
\begin{enumerate}
    \item \label{item:relations vectorization scheme_wmin} $\bigcup\limits_{p \in \N} \wargmin_x \eqref{vpp} \subseteq \wargmin \eqref{sp} = \bigcap\limits_{\varepsilon >0}\bigcup\limits_{p\in \N} \varepsilon\textrm{-}\wargmin_x  \eqref{vpp},$
    \item \label{item:relations vectorization scheme_min} $\bigcup\limits_{p \in \N} \argmin_x \eqref{vpp} \subseteq \argminb \eqref{sp} \subseteq \bigcap\limits_{\varepsilon >0}\bigcup\limits_{p\in \N} \varepsilon\textrm{-}\argmin_x  \eqref{vpp}.$
\end{enumerate}
\end{Theorem}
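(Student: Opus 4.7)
My plan is to split the four claimed inclusions into two groups. The first inclusions in \ref{item:relations vectorization scheme_wmin} and \ref{item:relations vectorization scheme_min}, together with the ``$\subseteq$'' direction of the equality in \ref{item:relations vectorization scheme_wmin}, are essentially bookkeeping based on the earlier results. For the first inclusions I would apply Proposition \ref{prop:ssvp subset sssp} with $\varepsilon=0$ to each $p\in\N$ and take the union over $p$. For the ``$\subseteq$'' direction in \ref{item:relations vectorization scheme_wmin} I would apply Proposition \ref{prop:ssvp subset sssp} with $\varepsilon>0$ to get $\bigcup_p \varepsilon\textrm{-}\wargmin_x \eqref{vpp} \subseteq \varepsilon\textrm{-}\wargmin \eqref{sp}$, intersect this over $\varepsilon>0$, and invoke Proposition \ref{prop:lalita=wintersectionepsilon}\ref{item:wargmin=intersectionepsilon}.

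The remaining ``$\supseteq$'' inclusion in \ref{item:relations vectorization scheme_wmin} and the second inclusion in \ref{item:relations vectorization scheme_min} both reduce to the same technical claim: given $\bar x\in\wargmin\eqref{sp}$ (respectively $\bar x\in\argminb\eqref{sp}$) and $\varepsilon>0$, one can construct $p\in\N$ and $\bar y^1,\dots,\bar y^p\in F(\bar x)$ such that $(\bar x,\bar y^1,\dots,\bar y^p)\in\varepsilon\textrm{-}\wargmin\eqref{vpp}$ (respectively $\in\varepsilon\textrm{-}\argmin\eqref{vpp}$). My plan here is to first pick $\delta>0$ with $e+\delta\mathbb{B}\subseteq K$, which exists because $e\in\Int K$, and then use compactness of $F(\bar x)$ to extract a finite $\tfrac{\delta\varepsilon}{2}$-net $\{\bar y^1,\dots,\bar y^p\}\subseteq F(\bar x)$. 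The choice of $\delta$ yields the scaling estimate $v\preceq_K \tfrac{\|v\|}{\delta}\, e$ for every $v\in\R^m$ (verified by inserting $u=-v/\|v\|$ into $e+\delta u\in K$ and rescaling), so the net property translates into $\bar y^i-\tilde y\preceq_K \tfrac{\varepsilon}{2}\, e$ whenever $\|\bar y^i-\tilde y\|\le \tfrac{\delta\varepsilon}{2}$.

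I would then prove the claim by contradiction. An adversarial $(x,y^1,\dots,y^p)\in\gph F^p$ violating $\varepsilon$-weak minimality (respectively $\varepsilon$-type-two minimality) of $(\bar x,\bar y^1,\dots,\bar y^p)$ for \eqref{vpp} satisfies, in either case, $y^i\preceq_K \bar y^i-\varepsilon e$ for every $i\in[p]$. Given any $\tilde y\in F(\bar x)$, selecting $i$ from the net so that $\bar y^i-\tilde y\preceq_K \tfrac{\varepsilon}{2}\, e$ and adding the two preorder inequalities yields $y^i+\tfrac{\varepsilon}{2}\,e\preceq_K \tilde y$. Since $\tfrac{\varepsilon}{2}\,e\in\Int K$ and $K+\Int K\subseteq\Int K$, this upgrades to the strict relation $y^i\prec_K \tilde y$. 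As $\tilde y\in F(\bar x)$ was arbitrary, I obtain $F(x)\prec^\ell_K F(\bar x)$, which directly contradicts $\bar x\in\wargmin\eqref{sp}$; since $K$ is pointed one has $\Int K\subseteq K\setminus\{0\}$, and hence $F(x)\precneq^\ell_K F(\bar x)$ as well, contradicting $\bar x\in\argminb\eqref{sp}$. Thus both remaining inclusions will follow at once.

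The step I expect to be the main obstacle is the calibration between the metric radius of the net and the cone parameter $\varepsilon$. The constant $\delta$ must convert Euclidean proximity into a $K$-order bound in the direction of $e$, and the specific radius $\tfrac{\delta\varepsilon}{2}$ is chosen so that the resulting slack inside $\Int K$ is exactly what is needed to promote a non-strict inequality into a strict one. Without this extra margin the argument would yield only $y^i\preceq_K \tilde y$, which is insufficient to contradict either weak minimality or type-two minimality.
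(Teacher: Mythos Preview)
Your proposal is correct and follows essentially the same route as the paper: the easy inclusions come from Proposition~\ref{prop:ssvp subset sssp} and Proposition~\ref{prop:lalita=wintersectionepsilon}, and the substantive inclusion is obtained by choosing a finite net in $F(\bar x)$ whose radius is calibrated to $\varepsilon e\in\Int K$ and then deriving $F(x)\prec^\ell_K F(\bar x)$ from a hypothetical violator. The only cosmetic differences are that the paper phrases the net via the external covering number $n_{r_\varepsilon/2}(F(\bar x))$ (which it reuses in the subsequent corollary) rather than directly picking a net inside $F(\bar x)$, and that your labels ``$\subseteq$'' and ``$\supseteq$'' for the two halves of the equality are interchanged relative to the usual convention.
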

\begin{proof}
The first inclusion of \ref{item:relations vectorization scheme_wmin} is just an immediate consequence of Proposition \ref{prop:ssvp subset sssp} $\ref{item:monot epsilon}$ with $\varepsilon = 0.$ In order to see the equality in the second part, we first apply Proposition \ref{prop:ssvp subset sssp} $\ref{item:monot epsilon}$ and Proposition \ref{prop:lalita=wintersectionepsilon} \ref{item:wargmin=intersectionepsilon} to obtain

\begin{equation*}
\bigcap\limits_{\varepsilon >0}\bigcup\limits_{p\in \N} \varepsilon\textrm{-}\wargmin_x  \eqref{vpp} \subseteq \bigcap\limits_{\varepsilon >0} \varepsilon\textrm{-}\wargmin  \eqref{sp} = \wargmin \eqref{sp}.
\end{equation*} Thus, it remains to show that $\wargmin \eqref{sp} \subseteq \bigcap_{\varepsilon >0}\bigcup_{p\in \N} \varepsilon\textrm{-}\wargmin_x  \eqref{vpp}.$

Fix $\bar{x} \in \wargmin \eqref{sp}$ and let $\varepsilon >0.$ Then, because $\varepsilon e \in \Int K,$ there exists $r_\varepsilon >0$ such that $\varepsilon e +r_\varepsilon \B \subseteq \Int K.$ From this, it follows that 
\begin{equation}\label{eq:y include intK}
\forall \; y \in \R^m: y+ r_\varepsilon \B \subseteq y - \varepsilon e + \Int K.
\end{equation} Since $F(\bar{x})$ is compact, we have that $p:=n_{\frac{r_\varepsilon}{2}}(F(\bar{x})) < + \infty.$ Therefore, we can find $\{\hat{y}^1,\ldots,\hat{y}^p\} \subseteq \R^m$ such that 
\begin{equation}\label{eq:finite cover Fhat}
F(\bar{x}) \subseteq \{\hat{y}^1,\ldots,\hat{y}^p\} + \frac{r_\varepsilon}{2} \B.
\end{equation}
Now, by the definition of $p,$ it follows the existence of $\{\bar{y}^1,\ldots,\bar{y}^p\} \subseteq F(\bar{x})$ such that
\begin{equation}\label{eq:ybar def}
\forall \; i\in [p]: \bar{y}^i \in \left(\hat{y}^i +\frac{r_\varepsilon}{2} \B \right)\cap F(\bar{x}).
\end{equation} Otherwise, we can assume without loss of generality  that $\left(\hat{y}^p +\frac{r_\varepsilon}{2} \B \right)\cap F(\bar{x}) = \emptyset,$ which would imply  $n_{\frac{r_\varepsilon}{2}}(F(\bar{x})) \leq p-1,$ a contradiction.

 We now claim that $(\bar{x},\bar{y}^1,\ldots,\bar{y}^p) \in \varepsilon\textrm{-} \wargmin \eqref{vpp}.$ Indeed, otherwise we can find $(x,y^1,\ldots,y^p) \in \gph  F^p$ such that
\begin{equation}\label{eq:epsilon decrease cover}
\forall\; i \in [p]: y^i \prec_K \bar{y}^i - \varepsilon e.
\end{equation} Thus, from \eqref{eq:finite cover Fhat}, \eqref{eq:ybar def}, \eqref{eq:y include intK} and \eqref{eq:epsilon decrease cover}, we deduce that
%
\begin{eqnarray*}
F(\bar{x}) & \subseteq & F(\bar{x}) + K\\
           & \subseteq & \bigcup\limits_{i=1}^p \left(\hat{y}^i + \frac{r_\varepsilon}{2} \B\right) + K\\
           & \subseteq &\bigcup\limits_{i=1}^p \left(\bar{y}^i + r_\varepsilon \B\right) + K\\
           & \subseteq &  \bigcup\limits_{i=1}^p \left(\bar{y}^i - \varepsilon e + \Int K\right) + K\\ 
           & \subseteq & \bigcup\limits_{i=1}^p \left(y^i + \Int K\right) + K\\
           & = & \left\{y^1,\ldots, y^p\right\} + \Int K\\
           & \subseteq & F(x) + \Int K,
\end{eqnarray*} 
which implies that $F(x) \prec^\ell_K F(\bar{x}),$ a contradiction to $\bar{x} \in \wargmin \eqref{sp}.$ This proves \ref{item:relations vectorization scheme_wmin}.

The first inclusion of \ref{item:relations vectorization scheme_min} is just an immediate consequence of Proposition \ref{prop:ssvp subset sssp} $\ref{item:monot epsilon for min}$ with $\varepsilon = 0.$ The proof of the second inclusion follows the same arguments as those given for the similar inclusion in \ref{item:relations vectorization scheme_wmin}, but replacing $\wargmin \eqref{sp},$ $\varepsilon\textrm{-} \wargmin_{x} \eqref{vpp}$ and \eqref{eq:epsilon decrease cover}  by  $\argminb \eqref{sp},$ $\varepsilon\textrm{-} \argmin_{x} \eqref{vpp}$ and 
\begin{equation}\label{eq:epsilon decrease cover'}
\forall\; i \in [p]: y^i \preceq_K \bar{y}^i - \varepsilon e,
\end{equation} respectively. The statement follows.
\end{proof}

\begin{Remark}
As a consequence of the proof of Theorem \ref{thm:relations vectorization scheme} we in fact also have that 
$$ \wargmin \eqref{sp}
=
\bigcap\limits_{\varepsilon >0}\bigcup\limits_{p\in \N} \varepsilon\textrm{-}\wargmin_x  \eqref{vpp}
= 
\bigcap\limits_{\varepsilon >0}\bigcup\limits_{p\in \N} \varepsilon\textrm{-}\argmin_x  \eqref{vpp}$$
\end{Remark}

The following example shows that, in Theorem \ref{thm:relations vectorization scheme}  \ref{item:relations vectorization scheme_min}, the second inclusion is indeed strict in general. Hence, we do not obtain equality for the minimal solutions but only for the weakly minimal solutions.
\begin{Example}
\label{ex:violation vectorization equality for min}
Let $\Omega=[0,1]$, $K=\R^2_+$, $e= (1,1)^\top,$ and consider $F: \R \rightrightarrows \R^2$ given by 
$$F(x):= \left\{
\begin{array}{ll} 
\{ (0,x)^\top  
\}
& \textrm{ if } x \in \Omega, \\
\emptyset & \textrm{ if } x \notin \Omega. \end{array} \right. $$
Then, it is easy to check that $ \argminb \eqref{sp}=\{ 0 \} $. 
However, for every $\varepsilon >0$  it is $(x,(0,x)^\top)\in \varepsilon\textrm{-}\argmin \eqref{vpp}$ for $p=1$ and for all $x\in\Omega$. Hence, we have $$ \argminb \eqref{sp} =\{0\} \nsupseteq [0,1]= \bigcap\limits_{\varepsilon >0}\bigcup\limits_{p\in \N} \varepsilon\textrm{-}\argmin_x  \eqref{vpp}.$$
%
\end{Example}

\begin{Corollary} The following statements hold:
\begin{enumerate}
    \item \label{cor:sandwich_wmin} Suppose that $F\left(\wargmin \eqref{sp}\right)$ is bounded. Then, for every $\varepsilon>0$ we can find $p \in \N$ such that
\begin{displaymath}
\wargmin_x \eqref{vpp} \subseteq \wargmin \eqref{sp} \subseteq \varepsilon\textrm{-} \wargmin_x \eqref{vpp}.
\end{displaymath}
\item \label{cor:sandwich_min} Suppose that $F\left(\argminb \eqref{sp}\right)$ is bounded. Then, for every $\varepsilon>0$ we can find $p \in \N$ such that
\begin{displaymath}
\argmin_x \eqref{vpp} \subseteq \argminb \eqref{sp} \subseteq \varepsilon\textrm{-} \argmin_x \eqref{vpp}.
\end{displaymath}
\end{enumerate}

\end{Corollary}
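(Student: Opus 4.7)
The first inclusion in both \ref{cor:sandwich_wmin} and \ref{cor:sandwich_min} is immediate from Proposition \ref{prop:ssvp subset sssp} applied with $\varepsilon = 0$. The content lies in the second inclusions, which upgrade Theorem \ref{thm:relations vectorization scheme} from a statement with an $\bar x$-dependent $p$ (hidden inside the union over $p \in \N$) to a statement with a single $p$ that works uniformly for every $\bar{x} \in \wargmin \eqref{sp}$ (resp.\ $\argminb \eqref{sp}$).

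My plan is to revisit the cover-based construction from the proof of Theorem \ref{thm:relations vectorization scheme}, where the number of slack vectors was chosen as $p_{\bar x} := n_{r_\varepsilon/2}(F(\bar{x}))$, and replace the local cover by a \emph{global} one obtained from the enlarged set $F(\wargmin \eqref{sp})$. Concretely, I would fix $\varepsilon > 0$, pick $r_\varepsilon > 0$ with $\varepsilon e + r_\varepsilon \B \subseteq \Int K$, and set $p := n_{r_\varepsilon / 2}\bigl(F(\wargmin \eqref{sp})\bigr)$, which is finite by the boundedness hypothesis. This yields a cover $\{\hat{y}^1, \ldots, \hat{y}^p\}$ with $F(\wargmin \eqref{sp}) \subseteq \{\hat y^1,\dots,\hat y^p\} + \tfrac{r_\varepsilon}{2}\B$, and in particular $F(\bar{x}) \subseteq \{\hat y^1,\dots,\hat y^p\} + \tfrac{r_\varepsilon}{2}\B$ for every $\bar{x} \in \wargmin \eqref{sp}$.

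Given such an $\bar{x}$, define $I(\bar{x}) := \{i \in [p] \mid (\hat{y}^i + \tfrac{r_\varepsilon}{2} \B) \cap F(\bar{x}) \neq \emptyset\}$; for $i \in I(\bar x)$ pick $\bar{y}^i$ in this nonempty intersection, and for $i \notin I(\bar x)$ pick $\bar{y}^i \in F(\bar{x})$ arbitrarily. Then $F(\bar x) \subseteq \bigcup_{i \in I(\bar x)}(\hat y^i + \tfrac{r_\varepsilon}{2}\B) \subseteq \bigcup_{i=1}^p(\bar y^i + r_\varepsilon \B)$, and the same inclusion chain used in the proof of Theorem \ref{thm:relations vectorization scheme} yields $(\bar{x},\bar{y}^1,\ldots,\bar{y}^p) \in \varepsilon\textrm{-}\wargmin \eqref{vpp}$, hence $\bar x \in \varepsilon\textrm{-}\wargmin_x \eqref{vpp}$. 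The proof of \ref{cor:sandwich_min} is identical, using the set $F(\argminb \eqref{sp})$ in place of $F(\wargmin \eqref{sp})$ and replacing $\prec_K$ by $\precneq_K$ throughout the cover chain, exactly as was done in the second half of Theorem \ref{thm:relations vectorization scheme}.

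I expect the main obstacle to be precisely the uniformity of $p$: because the cover $\{\hat y^1,\dots,\hat y^p\}$ is chosen globally on $F(\wargmin \eqref{sp})$, some of the balls $\hat y^i + \tfrac{r_\varepsilon}{2}\B$ may fail to meet $F(\bar x)$, which is why a bare translation of the Theorem \ref{thm:relations vectorization scheme} argument does not immediately go through. The resolution is the observation above that the chain of inclusions only uses indices in $I(\bar x)$, so the ``unused'' slots can be filled with arbitrary elements of $F(\bar x)$ without affecting $\varepsilon$-weak (resp.\ $\varepsilon$-)minimality; monotonicity of $n_{r_\varepsilon/2}$ with respect to inclusion then ensures that this fixed $p$ dominates the $\bar x$-local covering numbers appearing in Theorem \ref{thm:relations vectorization scheme}.
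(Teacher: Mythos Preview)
Your proof is correct, and you identify exactly the same uniform bound $p = n_{r_\varepsilon/2}\bigl(F(\wargmin \eqref{sp})\bigr)$ that the paper uses. The difference is in how you pass from this choice of $p$ to the conclusion. The paper argues more modularly: it extracts from the proof of Theorem \ref{thm:relations vectorization scheme} the implication $\bar{x} \in \wargmin \eqref{sp} \Rightarrow \bar{x} \in \varepsilon\textrm{-}\wargmin_x(\mathcal{VP}_{p_{\bar x}})$ with $p_{\bar x} = n_{r_\varepsilon/2}(F(\bar{x}))$, observes that $p_{\bar x} \leq p$ by monotonicity of the covering number, and then invokes Proposition \ref{prop:sset monot} \ref{item:monot p} to lift membership from $(\mathcal{VP}_{p_{\bar x}})$ to $(\mathcal{VP}_p)$. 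You instead re-execute the covering chain directly with the global cover, introducing the index set $I(\bar x)$ and filling unused slots with arbitrary elements of $F(\bar x)$. Both work; the paper's route is shorter because Proposition \ref{prop:sset monot} already encapsulates your ``unused slots can be filled arbitrarily'' observation, whereas your route is self-contained and makes that mechanism explicit. Your final remark about monotonicity of $n_{r_\varepsilon/2}$ is precisely the paper's argument in disguise, so you have in fact sketched both routes.
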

\begin{proof}
The first inclusions are already known from Theorem \ref{thm:relations vectorization scheme}. In order to show the second inclusion of \ref{cor:sandwich_wmin}, note that from the proof of Theorem \ref{thm:relations vectorization scheme} we also get:
\begin{equation}\label{eq:pforx}
x \in \wargmin \eqref{sp} \Longrightarrow x \in \varepsilon\textrm{-}\wargmin_x (\mathcal{VP}_{p_x}),
\end{equation} where $p_x:= n_{\frac{r_\varepsilon}{2}}(F(x)),$ and $r_\varepsilon$ is such that \eqref{eq:y include intK} holds. Set now $p:= n_{\frac{r_\varepsilon}{2}}\left(F\left(\wargmin \eqref{sp}\right)\right).$ Then, since $F\left(\wargmin \eqref{sp}\right)$ is bounded, it is clear that $p \in \N.$ Furthermore, we also have that 
\begin{equation}\label{eq:suppx}
\max \left\{p_x \mid x \in \wargmin \eqref{sp} \right\}  \leq p.
\end{equation} Thus, taking into account \eqref{eq:pforx}, \eqref{eq:suppx} and Proposition \ref{prop:sset monot} \ref{item:monot p}, we deduce that 
\begin{eqnarray*}
\wargmin \eqref{sp} & \subseteq & \bigcup\limits_{x \in \wargmin \eqref{sp}} \varepsilon\textrm{-} \wargmin_x \left(\mathcal{VP}_{p_x}\right)\\
                    & \subseteq & \varepsilon \textrm{-}\wargmin_x \eqref{vpp}, 
\end{eqnarray*} as desired. The second inclusion of \ref{cor:sandwich_min} holds analogously.
\end{proof}

As Example \ref{ex:strict inclusion} in the next section will show, the first inclusions in the statements of Theorem \ref{thm:relations vectorization scheme} may be strict. In the final result of this section we show that, nevertheless, we can restrict ourselves to compute solutions of \eqref{sp} using the vectorization scheme. The reason is that, by doing that, we lose no quality sets in the image space.

\begin{Theorem}
Suppose that $\Omega$ is compact, that $F\left(\Omega\right)$ is bounded, and that $\gph F$ is closed. Then,

\begin{displaymath}
\forall\; x\in \Omega, \exists\; \bar{x} \in \cl \left(\bigcup\limits_{p \in \N} \argmin_x  \eqref{vpp}\right) : F(\bar{x})\preceq^\ell_K F(x).
\end{displaymath}
\end{Theorem}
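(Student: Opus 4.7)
Fix $x \in \Omega$. The strategy is to construct, for each $p \in \N$, a point $\bar{x}_p \in \argmin_x \eqref{vpp}$ whose image componentwise dominates a finite $\varepsilon_p$-covering of $F(x)$, with $\varepsilon_p \to 0^+$. A compactness argument will then extract a limit point $\bar{x} \in \cl \left(\bigcup\limits_{p \in \N} \argmin_x \eqref{vpp}\right)$, and the closedness of $\gph F$ and of $K$ will upgrade the componentwise domination in the limit to the set relation $F(\bar{x}) \preceq^\ell_K F(x)$.

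For the construction, first observe that under the standing hypotheses the set $G^p := \{(x', y^1, \ldots, y^p) \mid x' \in \Omega,\; y^i \in F(x') \text{ for all } i \in [p]\}$ is compact: closedness follows from that of $\gph F$, and boundedness from the compactness of $\Omega$ together with the boundedness of $F(\Omega)$. Therefore $f^p(G^p)$ is compact, and Proposition \ref{prop:domination property} ensures it satisfies the domination property with respect to $K^p$; equivalently, every feasible point of \eqref{vpp} is componentwise dominated by some element of $\argmin \eqref{vpp}$. Since $F(x)$ is compact and $n_\varepsilon(F(x)) < +\infty$ for every $\varepsilon > 0$, one can pick a sequence $\varepsilon_p \to 0^+$ and points $y_0^{p,1}, \ldots, y_0^{p,p} \in F(x)$ with $F(x) \subseteq \bigcup_{i=1}^p \bigl(y_0^{p,i} + \varepsilon_p \B\bigr)$. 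This is possible by taking any $(\varepsilon_p/2)$-cover of $F(x)$ of cardinality $n_{\varepsilon_p/2}(F(x))$, shifting each centre to a nearby point of $F(x)$ at the cost of doubling the radius, and padding with repetitions to reach exactly $p$ points; the constraint $n_{\varepsilon_p/2}(F(x)) \leq p$ determines how fast $\varepsilon_p$ can tend to zero. Applying the domination property to $(x, y_0^{p,1}, \ldots, y_0^{p,p}) \in G^p$ yields a point $(\bar{x}_p, \bar{y}_p^1, \ldots, \bar{y}_p^p) \in \argmin \eqref{vpp}$ with $\bar{y}_p^i \preceq_K y_0^{p,i}$ for every $i \in [p]$.

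For the limit step, compactness of $\Omega$ allows us to extract a subsequence with $\bar{x}_{p_k} \to \bar{x}$, and by construction $\bar{x} \in \cl \left(\bigcup\limits_{p \in \N} \argmin_x \eqref{vpp}\right)$. To verify $F(\bar{x}) \preceq^\ell_K F(x)$, fix an arbitrary $y \in F(x)$, and for each $k$ choose $i_k \in [p_k]$ with $\|y - y_0^{p_k, i_k}\| \leq \varepsilon_{p_k}$, so that $y_0^{p_k, i_k} \to y$. Boundedness of $F(\Omega)$ permits a further ($y$-dependent) subsequence with $\bar{y}_{p_k}^{i_k} \to y^*$ for some $y^* \in \R^m$. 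Closedness of $\gph F$ applied to the convergence $(\bar{x}_{p_k}, \bar{y}_{p_k}^{i_k}) \to (\bar{x}, y^*)$ gives $y^* \in F(\bar{x})$, while closedness of $K$ together with $\bar{y}_{p_k}^{i_k} \preceq_K y_0^{p_k, i_k}$ gives $y^* \preceq_K y$ after passing to the limit. Since $y$ was arbitrary, $F(x) \subseteq F(\bar{x}) + K$, which is the desired inequality.

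The main technical obstacle is orchestrating three compactness/covering arguments simultaneously — one in $\Omega$, one in $F(\Omega)$, and one over the discretization indices $i_k$ — in such a way that the single limit $\bar{x}$ dominates every element of the possibly uncountable set $F(x)$. The resolution is that the final extraction need only be carried out pointwise in $y$: for each $y$ one may freely pass to a further subsequence of the already-fixed $(\bar{x}_{p_k})$, since only the existence of \emph{some} $y^* \in F(\bar{x})$ with $y^* \preceq_K y$ is required, not a construction uniform in $y$.
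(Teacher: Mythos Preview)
Your proof is correct and follows essentially the same strategy as the paper: establish compactness of the feasible set of \eqref{vpp}, apply the domination property (Proposition~\ref{prop:domination property}) to dominate a fine discretization of $F(x)$, extract a limit $\bar{x}$ by compactness of $\Omega$, and conclude via the closedness of $\gph F$ and of $K$. The only notable difference is cosmetic: the paper discretizes $F(x)$ once and for all by fixing a countable dense subset $\{y^k\}_{k\in\N}$ and using the nested tuples $(x,y^1,\ldots,y^p)$, then finishes with a density argument, whereas you use $\varepsilon_p$-covers that vary with $p$ and argue pointwise in $y$ at the end; both routes are equivalent.
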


\begin{proof}
Under our assumptions, it is easy to check that the feasible sets of the problems \eqref{vpp}, that is, $\gph F^p \cap \left(\Omega \times \prod_{i=1}^p\R^m\right)$ are compact. Thus, in particular, we have that also the sets $f^p \left( \gph F^p \cap \left(\Omega \times \prod_{i=1}^p\R^m\right)\right)$ for $p \in \N$ are compact.

Now, fix $x \in \Omega,$ together with a countable dense subset $\{y^k\}_{k\in \N}$ of $F(x).$ Suppose that $x \notin \bigcup_{p \in \N} \argmin_x  \eqref{vpp}:$ otherwise there is nothing to prove. Then,
\begin{equation}\label{eq:notin argmin vp}
\forall\; p\in \N: \left(x,y^1,\ldots,y^p \right) \notin \argmin \eqref{vpp}.
\end{equation} Therefore, for every $p \in \N,$ we can use the compactness of $f^p \left( \gph F^p \cap \left(\Omega \times \prod_{i=1}^p\R^m\right)\right),$  \eqref{eq:notin argmin vp},  and Proposition \ref{prop:domination property}, to obtain an element $\left(x^p,y^{1,p},\ldots,y^{p,p}\right) \in \argmin \eqref{vpp}$ such that
\begin{equation}\label{eq:ykplessyk}
\forall\; k \in [p]: y^{k,p} \preceq_K y^k.
\end{equation} Now, since $\Omega$ is compact, we can assume without loss of generality that the sequence $\{x^p\}_{p\in \N}$ converges to some element $\bar{x} \in \Omega.$ We claim that $F(\bar{x}) \preceq^\ell_K F(x).$ Indeed, it is clear that $\bar{x} \in  \cl \left(\bigcup_{p \in \N} \argmin_x  \eqref{vpp}\right).$ Moreover, because $F(\Omega)$ is bounded, we can find for every $k \in \N$ an accumulation point $\bar{y}^k$ of the sequence $\{y^{k,p}\}_{p\geq k}.$ Taking the limit with respect to $p$ in \eqref{eq:ykplessyk} over a suitable subsequence, we deduce that
\begin{equation}\label{eq:ybarkplessyk}
\forall\; k \in \N: \bar{y}^k \preceq_K y^k.
\end{equation} Furthermore, because $\gph F$ is closed, we have that $\{\bar{y}^k\}_{k\in \N}\subseteq F(\bar{x}).$  Suppose now that $F(\bar{x}) \npreceq^\ell_K F(x).$ Then, we can find an element $y \in F(x) \setminus \left(F(\bar{x})+K\right).$ Since $F(\bar{x})$ is compact, the set $F(\bar{x})+K$ is closed. Therefore, there exists $\varepsilon >0$ such that $\left( y + \varepsilon \B\right) \cap \left( F(\bar{x})+K \right) = \emptyset.$ However, since $\{y^k\}_{k\in \N}$ is dense in $F(x),$ we can find $y^{k_0} \in y + \varepsilon \B$ for some $k_0 \in \N.$ This implies
\begin{equation*}
y^{k_0} \in \left(y + \varepsilon \B\right) \cap \left( \bar{y}^{k_0}+K \right) \subseteq \left(y + \varepsilon \B\right) \cap \left( F(\bar{x})+K\right),
\end{equation*} a contradiction. The statement follows.
\end{proof}

\section{Finite Dimensional Vectorization}
\label{section:fdvp}

In the previous section we saw that the solutions of \eqref{vpp} can approximate with arbitrary accuracy those of \eqref{sp}. In particular, Theorem \ref{thm:relations vectorization scheme} shows that the inclusions $\wargmin_x \eqref{vpp} \subseteq \wargmin \eqref{sp}$  and  $ \argmin_x \eqref{vpp} \subseteq \argminb \eqref{sp}$ hold for any $p \in \N.$ However, ideally one would like to have equality in the above inclusions. This would give us a guarantee that, after solving the multiobjective optimization problem \eqref{vpp}, we have already computed all of the solutions of \eqref{sp} that are of interest. In this section we study that particular case. This ideal notion of equivalence between the set optimization problem and the multiobjective optimization problems is formally captured in the following definition.

\begin{Definition}
Consider problem \eqref{sp} and the parametric family \eqref{vpp} for $p \in \N.$ \begin{enumerate}
    \item We say that \eqref{sp} satisfies the weakly minimal finite dimensional vectorization  property \textup{(wFDVP)} if
\begin{equation*}
\exists\; p \in \N: \wargmin_x \eqref{vpp} = \wargmin \eqref{sp}.
\end{equation*} 
\item  We say that \eqref{sp} satisfies the minimal finite dimensional vectorization  property \textup{(mFDVP)} if
\begin{equation*}
\exists\; p \in \N: \argmin_x \eqref{vpp} = \argminb \eqref{sp}.
\end{equation*} 
\end{enumerate}
\end{Definition}

A natural question is whether the class of set optimization problems satisfying either finite dimensional vectorization property is large enough. In our first result of the section, we provide a partial answer to this question in a specific topological sense. In order to do this, we consider a metric on a particular class of set-valued mappings that has been used in the stability analysis of set optimization problems, see \cite{KarunaLalitha2020, SongTangWang2013}.  This metric is defined by means of the classical Hausdorff distance on the class of nonempty compact subsets of $\R^m,$ which is denoted by $d_H$ in this paper. Recall that, for nonempty compact sets $A,B \subseteq \R^m,$ 
$$d_H(A,B):= \inf \left\{\varepsilon \geq 0 \mid A \subseteq B + \varepsilon \B, B \subseteq A + \varepsilon \B\right\}.$$

\begin{Theorem}\label{thm:densityresult}
Consider the metric space $\left(\mathcal{F}(\Omega),d\right),$ where
$$\mathcal{F}(\Omega):= \left\{F: \R^n \rightrightarrows \R^m \mid \dom F = \Omega, \; F(\Omega) \textrm{ is bounded, } \forall\; x\in \Omega: F(x) \textrm{ is compact}\right\} $$  and $d:\mathcal{F}(\Omega) \times \mathcal{F}(\Omega) \rightarrow \R_+$ is given by $d(F_1,F_2):= \sup\limits_{x \in \Omega} d_H(F_1(x),F_2(x)).$  Furthermore, let 
 $$\mathcal{V}(\Omega):= \left\{F \in \mathcal{F}(\Omega) \mid \eqref{sp} \textrm{ satisfies \textup{(wFDVP)}}\right\}.$$ Then, $\mathcal{V}(\Omega)$ is dense in $\mathcal{F}(\Omega).$
\end{Theorem}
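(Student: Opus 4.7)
The plan is to show that every $F \in \mathcal{F}(\Omega)$ can be approximated arbitrarily well (in the sup-Hausdorff metric $d$) by a set-valued mapping whose images are finite sets with a uniform cardinality bound, and then to observe that any such finite-image mapping automatically satisfies \textup{(wFDVP)}.

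\textbf{Step 1: finite-image mappings lie in $\mathcal{V}(\Omega)$.} I will first prove the auxiliary claim: if $G \in \mathcal{F}(\Omega)$ is such that $|G(x)| \le p$ for all $x \in \Omega$, then the corresponding problem \eqref{sp} satisfies \textup{(wFDVP)} with this $p$. By Theorem \ref{thm:relations vectorization scheme}\,\ref{item:relations vectorization scheme_wmin}, the inclusion $\wargmin_x (\mathcal{VP}_p) \subseteq \wargmin (\mathcal{SP})$ always holds, so it remains to establish the reverse inclusion. Given $\bar x \in \wargmin (\mathcal{SP})$, I would enumerate $G(\bar x) = \{\bar y^1, \dots, \bar y^q\}$ with $q \le p$ and extend by repetition to a tuple $(\bar y^1, \dots, \bar y^p) \in G(\bar x)^p$. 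If $(\bar x, \bar y^1, \dots, \bar y^p)$ were not weakly minimal for $(\mathcal{VP}_p)$, one would obtain $(x, y^1, \dots, y^p) \in \gph G^p$ with $y^i \prec_K \bar y^i$ for all $i \in [p]$; using only the first $q$ indices, $G(\bar x) = \{\bar y^1, \dots, \bar y^q\} \subseteq G(x) + \Int K$, contradicting $\bar x \in \wargmin (\mathcal{SP})$.

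\textbf{Step 2: finite-image approximation of $F$.} Fix $F \in \mathcal{F}(\Omega)$ and $\varepsilon > 0$. Since $F(\Omega)$ is bounded, it is totally bounded, so there exist points $z^1, \dots, z^p \in \R^m$ with $F(\Omega) \subseteq \{z^1, \dots, z^p\} + \varepsilon \B$. I would then define
\[
G(x) := \{\, z^i \mid (z^i + \varepsilon \B) \cap F(x) \neq \emptyset \,\} \quad \text{for } x \in \Omega,
\]
and $G(x) := \emptyset$ otherwise. A short verification shows that $G(x)$ is nonempty (because $F(x)$ is nonempty and covered by the $\varepsilon$-balls around the $z^i$), that $G(\Omega) \subseteq \{z^1, \dots, z^p\}$ is finite, and that $d_H(F(x), G(x)) \le \varepsilon$ for every $x \in \Omega$ (both inclusions $F(x) \subseteq G(x) + \varepsilon \B$ and $G(x) \subseteq F(x) + \varepsilon \B$ follow directly from the construction). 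Hence $G \in \mathcal{F}(\Omega)$, $d(F, G) \le \varepsilon$, and $|G(x)| \le p$ for all $x \in \Omega$, so by Step 1, $G \in \mathcal{V}(\Omega)$.

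Combining Steps 1 and 2 yields that every $\varepsilon$-ball around $F$ in $\mathcal{F}(\Omega)$ meets $\mathcal{V}(\Omega)$, which is the claimed density. The main conceptual obstacle is spotting the right approximation class: once one recognizes that set-valued mappings with uniformly finitely many values in their images automatically give the finite-dimensional vectorization property (because the slack vectors in $(\mathcal{VP}_p)$ can enumerate the entire image set $F(\bar x)$), the remaining construction is essentially a standard totally-bounded covering argument, and the verification of $d_H$-closeness is routine.
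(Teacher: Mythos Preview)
The proposal is correct and follows essentially the same strategy as the paper: approximate $F$ in the sup-Hausdorff metric by a mapping with uniformly finite images and then invoke the fact that such mappings satisfy \textup{(wFDVP)} (which is Theorem~\ref{thm:fdvpfinite} in the paper, here reproved directly in your Step~1). The only cosmetic difference is that the paper uses a per-point $\varepsilon$-cover $\{y^{x,1},\ldots,y^{x,p_x}\}$ of each $F(x)$ and then bounds $\max_x p_x$ via boundedness of $F(\Omega)$, whereas you fix a single global $\varepsilon$-net for $F(\Omega)$ and intersect it with each $F(x)$; both constructions yield the same conclusion.
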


\begin{proof}
Fix $F \in \mathcal{F}(\Omega)$ and $\varepsilon >0.$ Next, similar to the proof of Theorem  \ref{thm:relations vectorization scheme}, for every $x \in \Omega$ we can set $p_x: = n_{\varepsilon}(F(x))$ and find $\left\{y^{x,1},\ldots, y^{x,p_x}\right\} \subseteq \R^m$ such that
\begin{equation}\label{eq:cover1}
F(x) \subseteq \left\{y^{x,1},\ldots, y^{x,p_x}\right\}+ \varepsilon \mathbb{B}.
\end{equation} Moreover, because of the minimality of $p_x,$ it is easy to check that we also have 
\begin{equation}\label{eq:cover2}
\left\{y^{x,1},\ldots, y^{x,p_x}\right\} \subseteq F(x) + \varepsilon \mathbb{B}.
\end{equation} Thus, the inclusions \eqref{eq:cover1} and \eqref{eq:cover2} together imply 
\begin{equation}\label{eq:partialhausdorff}
d_H \left(F(x), \left\{y^{x,1},\ldots, y^{x,p_x}\right\}\right) \leq \varepsilon.
\end{equation} Define now $\tilde{F}: \R^n \rightrightarrows \R^m$ as 
$$\tilde{F}(x):= \left\{
\begin{array}{ll}
      \left\{y^{x,1},\ldots, y^{x,p_x}\right\} & \textrm{ if }x \in \Omega,\\
      \emptyset & \textrm{ if } x\notin \Omega. \\
\end{array} 
\right. $$ 
Then, $\tilde{F}$ is compact valued. Moreover, \eqref{eq:cover2} together with  the fact that $F(\Omega)$ is bounded, imply that $\tilde{F}(\Omega)$ is bounded. Thus, $\tilde{F} \in \mathcal{F}(\Omega).$ Next, taking the supremum over all $x \in \Omega$ in \eqref{eq:partialhausdorff}, we obtain that $d(F, \tilde{F}) \leq \varepsilon.$ Finally, because $F(\Omega)$ is bounded, we have that $\max\limits_{x \in \Omega} p_x  < + \infty.$ The fact that $\tilde{F} \in \mathcal{V}(\Omega)$ follows from Theorem \ref{thm:fdvpfinite} below, where it is shown that, for set-valued mappings with finite image sets, the corresponding set optimization problem  satisfies $\textup{(wFDVP)}$.
\end{proof}

If we  define 
$$\mathcal{V}'(\Omega) := \left\{F \in \mathcal{F}(\Omega) \mid \eqref{sp} \textrm{ satisfies \textup{(mFDVP)}}\right\},$$  one could attempt to replace $\mathcal{V}(\Omega)$ by $\mathcal{V}'(\Omega)$ in Theorem \ref{thm:densityresult}. However, in this case a proof (if any)  would require different arguments since Theorem  \ref{thm:fdvpfinite} cannot be applied, see Example \ref{ex:violation mFDVP}.

The following example illustrates a set-valued mapping $F$ for which the associated set optimization problem exhibits a pathological behaviour. Specifically, it shows that, in general, we have $ \mathcal{F}(\Omega)\neq \mathcal{V}(\Omega),$  $ \mathcal{F}(\Omega)\neq \mathcal{V}'(\Omega),$ and the first inclusions in Theorem \ref{thm:relations vectorization scheme} \ref{item:relations vectorization scheme_wmin}, \ref{item:relations vectorization scheme_min} are strict. 

\begin{Example}\label{ex:strict inclusion}
Let $\Omega = [0,1]$, $Y=\R^2$ and $K=\R^2_+$. 
We construct a set-valued map $F\colon\R \rightrightarrows \R^2$ with $\dom F = \Omega$ which is piecewise constant and for which the image sets $F(x)$ have a finite cardinality for any $x\in[0,1)$, and where $F(1)$ has infinite many elements. We illustrate some of the sets $F(x)$ in 
Figure \ref{fig:Cantor++}, which we explain in more detail below after constructing the elements of $F(x)$.
\begin{figure}[ht]\centering
  \begin{tikzpicture}[scale= 2.6]
    \draw[thick] (2,-3.95) -- (2,-4.05) node[below]{$2$};
	\draw[thick] (3,-3.95) -- (3,-4.05) node[below]{$3$};
	\draw[thick] (4,-3.95) -- (4,-4.05) node[below]{$4$};
	\draw[thick] (5,-3.95) -- (5,-4.05) node[below]{$5$};
	
	\draw[thick] (2.05,-4) -- (1.95,-4) node[left]{$-4$};
	\draw[thick] (2.05,-3) -- (1.95,-3) node[left]{$-3$};
	\draw[thick] (2.05,-2) -- (1.95,-2) node[left]{$-2$};
	\draw[thick] (2.05,-1) -- (1.95,-1) node[left]{$-1$};
	
	\draw[thick,->] (2,-4) -- (5.5,-4) node[right]{$y_1$};
	\draw[thick,->] (2,-4) -- (2,-0.5) node[left]{$y_2$};
    
   \draw[fill=black] (2.5,-2.5) circle (1pt) node[above left] {$\bar{y}^{0,0}$};
   
   \draw[fill=green] (3,-2) circle (1pt) node[above left] {$\bar{y}^{0,1}$};
   \draw[fill=green] (4.25,-3.25) circle (1pt) node[above left] {$\bar{y}^{1,1}$};
   
   \draw[fill=blue] (3.25,-1.75) circle (1pt) node[above left] {$\bar{y}^{0,2}$};
   \draw[fill=blue] (4.5,-3) circle (1pt) node[above left] {$\bar{y}^{1,2}$};
   \draw[fill=blue] (3.875,-2.375) circle (1pt) node[above left] {$\bar{y}^{2,2}$};
   
   \draw[fill=red] (3.375,-1.625) circle (1pt) node[above left] {$\bar{y}^{0,3}$};
   \draw[fill=red] (4.625,-2.875) circle (1pt) node[above left] {$\bar{y}^{1,3}$};
   \draw[fill=red] (4,-2.25) circle (1pt) node[above left] {$\bar{y}^{2,3}$};
   \draw[fill=red] (3.6875,-1.9375) circle (1pt) node[above left] {$\bar{y}^{3,3}$};
   
   \draw (3.4375,-1.5625) node[rotate=45]{...};
   \draw (4.6875,-2.8125) node[rotate=45]{...};  
   \draw (4.0625,-2.1875) node[rotate=45]{...};
   \draw (3.750,-1.88125) node[rotate=45]{...};
   
   \draw[fill=orange] (3.5,-1.5) circle (1pt) node[above right] {$\bar{y}^{0,*}$};
   \draw[fill=orange] (4.75,-2.75) circle (1pt) node[above right] {$\bar{y}^{1,*}$};
   \draw[fill=orange] (4.125,-2.125) circle (1pt) node[above right] {$\bar{y}^{2,*}$};
   \draw[fill=orange] (3.8125,-1.8125) circle (1pt) node[above right] {$\bar{y}^{3,*}$};
   \draw[fill=orange] (3.65625,-1.65625) circle (1pt) node[above right] {$\bar{y}^{4,*}$};
   
   \draw (3.578125,-1.578125) node[rotate=-45]{...};
   
   \matrix [draw,below left] at (5.5,-0.5) {
  \node [shape=circle, draw=black, fill=black, line width=1pt,label=right:$F(0)$] {}; \\
  \node [shape=circle, draw=black, fill=green, line width=1pt,label=right:$F(\frac{1}{2})$] {}; \\
  \node [shape=circle, draw=black, fill=blue, line width=1pt,label=right:$F(\frac{2}{3})$] {}; \\
  \node [shape=circle, draw=black, fill=red, line width=1pt,label=right:$F(\frac{3}{4})$] {}; \\
  \node [shape=circle, draw=black, fill=orange, line width=1pt,label=right:$\in F(1)$] {}; \\
};
	
	\end{tikzpicture}
	\caption{Image sets $F(x)$ from Example \ref{ex:strict inclusion}}
	\label{fig:Cantor++}
\end{figure}
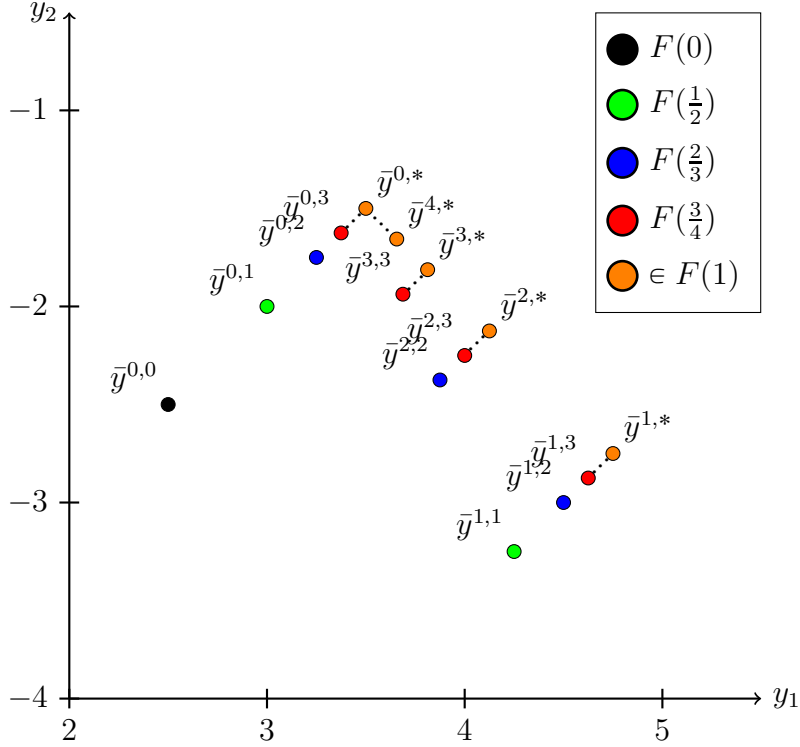

First, we define $\bar{y}^{0}:= (2.5, -2.5)^\top$ and $$\bar{y}^{i}:=\begin{pmatrix} 1+2^{-i}+2^{-i-1} \\ 1-2^{-i+1}-2^{-i}-2^{-i-1} \end{pmatrix} +\bar{y}^0 $$ for all $i\in\N$. Furthermore, for all $k\in\N \cup \{0\}$ and all $i\in\N \cup \{0\}$ with $i\leq k,$ we define $$\bar{y}^{i,k}:= \bar{y}^i+ \left(\sum_{s=i+1}^{k}2^{-s}\right)\begin{pmatrix} 1 \\ 1 \end{pmatrix}.$$
Note that this implies $\bar{y}^{i,i}=\bar{y}^i.$ 
The sets of points $\{\bar{y}^{0,k},\ldots,\bar{y}^{k,k}\}$ for $k\in\N \cup \{0\}$ will built  the image sets $F(x)$ for $x\in[0,1)$. For the image set $F(1)$ we need 
 for each $i \in \N \cup \{0\}$ the limit points of the  the sequences  $(\bar{y}^{i,k})_{k\geq i}$, i.e., 
  $$\bar{y}^{i,*}:= \lim_{k\rightarrow\infty} \bar{y}^{i,k}.$$
Next, we define $z:[0,1)\rightarrow \N \cup \{0\}$ by $z(x):= \left\lfloor \frac{1}{1-x} \right\rfloor -1$ and obtain $F\colon\R \rightrightarrows \R^2$ as
\begin{equation*}
F(x):=\begin{cases}
\bigcup\limits_{i=0}^{z(x)}\{ \bar{y}^{i,z(x)} \}  & \, \textrm{ if }x \in [0,1), \\
\bigcup\limits_{i\in\N \cup \{0\}}\{ \bar{y}^{i,*} \} & \, \textrm{ if }x=1,\\
\emptyset & \textrm{ if } x \notin \Omega.
\end{cases}
\end{equation*}
Note that by definition $F$ is piecewise constant. In fact, for every $t\in\N$ and any $x\in \left[\frac{t-1}{t},\frac{t}{t+1}\right)$, it holds
\begin{equation*}
    F(x)=F\left(\frac{t-1}{t}\right). 
\end{equation*} Figure \ref{fig:Cantor++} illustrates the sets $F(x)$ for some selected values of $x$ in $[0,1]$. Due to the fact that $F$ is piecewise constant, we only illustrate representative image sets of the first four intervals $[\frac{t-1}{t},\frac{t}{t+1})$, $t\in [4]$. We also depict the first five points $\bar{y}^{i,*}$, $i\in  [4] \cup \{0\},$ of $F(1)$. Since the set $F(1)$ consists of infinitely many nonconnected points, we cannot display the whole set. However, one can check that
\begin{equation*}
    \bar{y}^{i+1,*}=\frac{\bar{y}^{0,*}+\bar{y}^{i,*}}{2} \mbox{ holds for all $i\in \N$.}
\end{equation*}
With this equation one gets an understanding of the structure of $F(1)$.

The set-valued mapping $F$ and its corresponding problem \eqref{sp} satisfy the following properties:
\begin{enumerate}
    \item $F\in\mathcal{F}(\Omega).$
    
     This is a consequence of the definitions of $F$ and $\Omega.$
    \item $\argmin \eqref{sp}=\wargmin \eqref{sp}=\Omega.$
    
    Indeed, fix elements $v, w\in [0,1)$ and assume without loss of generality that  $k_1:=z(v) \neq z(w)=:k_2$. Otherwise, we would have $F(v)=F(w)$ and nothing would have to be shown. We assume without loss of generality $k_1+1\leq k_2$.  Now one can verify that
    \begin{equation*}
        \forall\; i\in [k_2] \cup \{0\}:\; \bar{y}^{0,k_1}_1 < \bar{y}^{i,k_2}_1 
    \end{equation*} 
    and that $\bar{y}^{0,k_1}\in F(v)$, but $\bar{y}^{0,k_1}\notin F(w)+\R^2_+$. Thus, 
    $
        F(v)\nsubseteq F(w)+\R^2_+.
    $ 
    Additionally, one can check that 
     $\bar{y}^{k_1+1,k_2}_2 < \bar{y}^{0,k_1}_2$ and 
    $\bar{y}^{k_1+1,k_2}_1 < \bar{y}^{i,k_1}_1$ holds for all $i\in [k_1]$. Thus, $\bar{y}^{k_1+1,k_2}\in F(w)$, but
     $\bar{y}^{k_1+1,k_2}\notin F(v)+\R^2_+,$
    and hence
   $
        F(w)\nsubseteq F(v)+\R^2_+.
    $ 
    
    It remains to consider the case $v\in[0,1)$ and $w=1$. Again, let $k_1=z(v)$. One can check that 
    \begin{equation*}
        \forall \; i\in\N: \;\bar{y}^{0,k_1}_1 < \bar{y}^{i,*}_1 
    \end{equation*}
     This implies 
     $F(v)\nsubseteq F(1)+\R^2_+.$
     Furthermore, it is easy to verify that $\bar{y}^{k_1+1,*}_2 < \bar{y}^{0,k_1}_2$ and $\bar{y}^{k_1+1,*}_1 < \bar{y}^{i,k_1}_1$ holds for all $i\in [k_1].$ Thus, in particular, $F(1)\nsubseteq F(v)+\R^2_+.$
      Then, we conclude that $\argmin \eqref{sp}=\Omega,$ and hence $\wargmin \eqref{sp}=\Omega$.
    
  \item \label{ex:proof wmin gap} $\bigcup_{p \in \N} \wargmin_x \eqref{vpp} \neq \wargmin \eqref{sp}$ and $\bigcup_{p \in \N} \argmin_x \eqref{vpp} \neq \argminb \eqref{sp}$.
    
    We only show the first statement for our example. The second one can be shown analogously.
    
    Since $\wargmin \eqref{sp}=\Omega $, we know that $1\in \wargmin \eqref{sp}$. Now, assume that there exists $p\in\N$ such that $1\in\wargmin_x \eqref{vpp} $. This implies the existence of  $y^1,\ldots,y^p\in F(1)$ such that $(1,y^1,\ldots,y^p)\in \wargmin \eqref{vpp}.$ 
    Then, for all $i\in [p]$ we have that  $y^i=\bar{y}^{k_i,*}$ for some $k_i\in\N \cup \{0\}$. Next, we set $j:=\max\{ k_i\mid i\in [p]\}$
    and $v:=\frac{j}{j+1}$. Then, $z(v)=j$ and $k_i\leq j$ for all $i\in [p]$ and thus $\bar{y}^{k_i,j}\in F(v)$. 
    It is now easy to check that  
     $\bar{y}^{k_i,j} \prec_{\R^2_+} \bar{y}^{k_i,*}$  for all $i\in [p]$.
    
    Hence, it is 
    $$ f^p(v,\bar{y}^{k_1,j},\ldots,\bar{y}^{k_p,j})\prec_{\R^{2p}_+} f^p(1,y^1,\ldots, y^p) .$$ This contradicts $(1,y^1,\ldots, y^p)\in \wargmin \eqref{vpp}$. Thus, 
    $ 1\in \wargmin \eqref{sp}$ but $1\not\in 
    \bigcup\limits_{p \in \N} \wargmin_x \eqref{vpp}.$ 
    
    \item $F\notin \mathcal{V}(\Omega) \cup \mathcal{V}'(\Omega).$
    
    This statement follows from \ref{ex:proof wmin gap}.
\end{enumerate}

\end{Example} 

In the rest of the section we focus on describing special classes of set optimization problems that satisfy the weakly minimal finite dimensional vectorization property $\textup{(wFDVP)}$. We start by analyzing the case in which the feasible set $\Omega$ is finite.

\begin{Theorem}
\label{thm:fdvp finite preimage}
Suppose that $|\Omega|< + \infty.$ Then,
$$
\eqref{sp} \ \mbox{  satisfies \textup{(wFDVP)}.}
$$
\end{Theorem}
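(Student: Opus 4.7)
The plan is to invoke the inclusion $\wargmin_x \eqref{vpp} \subseteq \wargmin \eqref{sp}$ already established in Theorem \ref{thm:relations vectorization scheme} \ref{item:relations vectorization scheme_wmin}, so that only the reverse inclusion must be proved. Crucially, a single parameter $p$ has to work uniformly for every $\bar x \in \wargmin \eqref{sp}$, and since $\Omega$ is finite I will take $p := |\Omega|$ and fix an enumeration $\Omega = \{x_1, \ldots, x_p\}$.

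Given $\bar x \in \wargmin \eqref{sp}$, the idea is to use one slack copy per element of $\Omega$ as a witness that $\bar x$ cannot be strictly dominated by that element. More precisely, for each $i \in [p]$ the weak minimality of $\bar x$ forbids $F(x_i) \prec^\ell_K F(\bar x)$, so $F(\bar x) \not\subseteq F(x_i) + \Int K$ and I can choose
\[
\bar y^i \in F(\bar x) \setminus \left(F(x_i) + \Int K\right).
\]
I then claim $(\bar x, \bar y^1, \ldots, \bar y^p) \in \wargmin \eqref{vpp}$. Arguing by contradiction, any strictly dominating feasible tuple $(x, y^1, \ldots, y^p) \in \gph F^p$ would satisfy $y^i \prec_K \bar y^i$ for every $i \in [p]$, and since $x \in \Omega$ we would have $x = x_{i_0}$ for some $i_0 \in [p]$. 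Then $y^{i_0} \in F(x_{i_0})$ together with $y^{i_0} \prec_K \bar y^{i_0}$ yields $\bar y^{i_0} \in F(x_{i_0}) + \Int K$, contradicting the construction of $\bar y^{i_0}$.

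The only real subtlety, rather than an obstacle, is the uniformity of $p$: the slack tuple constructed above is tailored to $\bar x$, but the indexing is taken over all of $\Omega$, so the ambient $p = |\Omega|$ is independent of $\bar x$. This uniformity is precisely what can fail in the general setting, as Example \ref{ex:strict inclusion} illustrates, and the finiteness of $\Omega$ is exactly what makes the $\varepsilon$-cushion used in the proof of Theorem \ref{thm:relations vectorization scheme} unnecessary here.
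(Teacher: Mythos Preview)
Your argument is correct and follows essentially the same route as the paper's proof: both pick, for each competitor $x_i \in \Omega$, a witness $\bar y^i \in F(\bar x)\setminus (F(x_i)+\Int K)$ and derive a contradiction from any strictly dominating tuple by looking at the coordinate indexed by the competitor. The only cosmetic differences are that the paper takes $p:=|\Omega|-1$ (omitting the redundant witness of $\bar x$ against itself) and, via Proposition~\ref{prop:domination property}, additionally pushes each $\bar y^i$ down into $\Min(F(\bar x),K)$, neither of which affects the logic.
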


\begin{proof}
We claim that $\textup{(wFDVP)}$ holds with $p := |\Omega| - 1.$ By Theorem \ref{thm:relations vectorization scheme} \ref{item:relations vectorization scheme_wmin} we already have
$\wargmin_x \eqref{vpp} \subseteq \wargmin \eqref{sp}$. For showing the remaining inclusion let $\Omega = \left\{x^1,\ldots,x^{|\Omega|}\right\}$ and suppose without loss of generality that $x^{|\Omega|} \in \wargmin \eqref{sp}.$  Then, for each $i \in [p],$ there exists $\bar{y}^i \in F\left(x^{|\Omega|}\right) \setminus \left(F(x^i) + \Int K\right).$ Furthermore, because of Proposition \ref{prop:domination property}, we can assume without loss of generality that $\bar{y}^i \in \Min\left(F\left(x^{|\Omega|}\right),K\right)$ for each $i \in [p].$  We then derive that 
 $(x^{|\Omega|},\bar{y}^1,\ldots, \bar{y}^p) \in \wargmin \eqref{vpp}.$ Indeed, otherwise  there would  exist    $(x^k,y^1,\ldots,y^p)\in \gph F^p$ for some $k\in[p]$ with $y^i \prec_K \bar{y}^i$ for all $i\in[p]$. Thus, for $i=k$ we get $\bar{y}^k\in F(x^k)+ \Int K,$ which contradicts our choice of $\bar{y}^k$. 
\end{proof}

The next class of set optimization problems that we study are those  in which the values of the set-valued objective mapping have finite cardinality. Problems with this structure arise in the study of set-based robustness concepts for  multiobjective optimization problems under uncertainty when the cardinality of the uncertainty set is finite, see \cite{idekobis2014}.

\begin{Theorem}\label{thm:fdvpfinite}
Suppose that $\max\limits_{x\in \Omega} |\Min(F(x),K)|< + \infty.$ Then,
$$
\eqref{sp}\ \mbox{ satisfies \textup{(wFDVP)}.}
$$
\end{Theorem}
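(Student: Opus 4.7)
The plan is to show that $p := \max_{x \in \Omega} |\Min(F(x), K)|$ is a parameter that realizes the (wFDVP) property. Since Theorem \ref{thm:relations vectorization scheme} \ref{item:relations vectorization scheme_wmin} already yields $\wargmin_x \eqref{vpp} \subseteq \wargmin \eqref{sp}$ for every $p \in \N$, only the reverse inclusion needs to be established for this specific choice of $p$.

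To prove the reverse inclusion, I would fix an arbitrary $\bar{x} \in \wargmin \eqref{sp}$ and construct an explicit lifting into $\wargmin \eqref{vpp}$. Since $F(\bar{x})$ is compact, Proposition \ref{prop:domination property} guarantees that $\Min(F(\bar{x}), K) \neq \emptyset$ and $F(\bar{x}) + K = \Min(F(\bar{x}), K) + K$. By the hypothesis, this minimal set has cardinality $q \leq p$, so I can enumerate its elements as $\bar{y}^1, \ldots, \bar{y}^q$ and pad, if $q < p$, by repeating $\bar{y}^1$ to obtain a tuple $(\bar{y}^1, \ldots, \bar{y}^p)$ with $\bar{y}^i \in \Min(F(\bar{x}), K)$ for every $i \in [p]$. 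I claim that $(\bar{x}, \bar{y}^1, \ldots, \bar{y}^p) \in \wargmin \eqref{vpp}$.

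The claim will be verified by contradiction. Suppose there exists $(x, y^1, \ldots, y^p) \in \gph F^p$ with $y^i \prec_K \bar{y}^i$ for every $i \in [p]$. The strategy is to deduce $F(x) \prec^\ell_K F(\bar{x})$, which would contradict $\bar{x} \in \wargmin \eqref{sp}$. For any $z \in F(\bar{x})$, the domination property yields $z = \bar{y}^i + k$ for some $i \in [p]$ and some $k \in K$. Combining with $\bar{y}^i - y^i \in \Int K$ and the fact that $\Int K + K \subseteq \Int K$, it follows that $z \in y^i + \Int K \subseteq F(x) + \Int K$, so $F(\bar{x}) \subseteq F(x) + \Int K$, i.e., $F(x) \prec^\ell_K F(\bar{x})$.

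The subtle point, and the only real obstacle, is making sure that the chosen $p$ works \emph{uniformly} across all $\bar{x} \in \wargmin \eqref{sp}$. This is exactly why the hypothesis is formulated as a supremum of the cardinalities of $\Min(F(x), K)$ over $x \in \Omega$: the padding argument requires having at least as many slots in the product $K^p$ as minimal elements in $F(\bar{x})$, and the uniform bound $p$ supplied by the assumption does the job. The rest of the reasoning is a direct translation between the componentwise strict order $\prec_{K^p}$ on the slack variables and the lower set less relation $\prec^\ell_K$ on the images, mediated by the external stability of $\Min(F(\bar{x}), K)$.
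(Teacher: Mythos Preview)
Your proof is correct and follows essentially the same approach as the paper's: set $p := \max_{x\in \Omega} |\Min(F(x),K)|$, list the minimal elements of $F(\bar{x})$, and derive $F(x)\prec^\ell_K F(\bar{x})$ from a hypothetical strict improvement in every slot via the domination property. The only cosmetic difference is that the paper first proves $(\bar{x},\bar{y}^1,\ldots,\bar{y}^{\bar p})\in \wargmin(\mathcal{VP}_{\bar p})$ for $\bar p=|\Min(F(\bar{x}),K)|$ and then invokes the monotonicity of Proposition~\ref{prop:sset monot}~\ref{item:monot p} to pass to $p$, whereas you pad the tuple with copies of $\bar{y}^1$ and argue directly in $(\mathcal{VP}_p)$; these are equivalent maneuvers.
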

\begin{proof}
Set $p:= \max\limits_{x\in \Omega} |\Min(F(x),K)|.$ According to our assumption, we have that $p\in \N.$ By Theorem \ref{thm:relations vectorization scheme} \ref{item:relations vectorization scheme_wmin}, it remains to show $ \wargmin \eqref{sp}\subseteq \wargmin_x \eqref{vpp}$. For that, fix  $\bar{x} \in \wargmin \eqref{sp}.$ Then, there exists $\bar{p} \leq p$  and elements $\bar{y}^1,\ldots,\bar{y}^{\bar{p}} \in \R^m$ such that
$$\Min(F(\bar{x}),K) = \left\{\bar{y}^1,\ldots,\bar{y}^{\bar{p}} \right\}.$$ We now claim that $(\bar{x},\bar{y}^1,\ldots,\bar{y}^{\bar{p}} ) \in \wargmin (\mathcal{VP}_{\bar{p}}).$ Indeed, otherwise we can find $(x,y^1,\ldots,y^{\bar{p}}) \in \gph F^{\bar{p}}$ such that 
\begin{equation}\label{eq:yilessbaryi}
\forall \; i \in [\bar{p}]: y^i \prec_K \bar{y}^i.
\end{equation}
Taking into account Proposition \ref{prop:domination property} and \eqref{eq:yilessbaryi}, we then find that
$$
F(\bar{x})   \subseteq    \left\{\bar{y}^1,\ldots,\bar{y}^{\bar{p}} \right\}+ K 
            \subseteq   \left\{y^1,\ldots, y^{\bar{p}} \right\} + \Int K\\
             \subseteq   F(x)+ \Int K,     $$
%
a contradiction to the fact that $\bar{x} \in \wargmin \eqref{sp}.$ Therefore, our claim is true and, in particular, we get that $\bar{x} \in \wargmin_x (\mathcal{VP}_{\bar{p}}).$ Finally, according to Proposition \ref{prop:sset monot} \ref{item:monot p}, this implies 
that $\bar{x} \in \wargmin_x \eqref{vpp}.$ 
\end{proof} The set optimization problems studied in Theorem \ref{thm:fdvp finite preimage} and Theorem \ref{thm:fdvpfinite} satisfy the
weakly minimal finite dimensional vectorization property $\textup{(wFDVP)}$. The next example shows that they do in general not also satisfy the  minimal finite dimensional vectorization property $\textup{(mFDVP)}$.
\begin{Example}\label{ex:violation mFDVP}
Let $\Omega=\{0,1,2\}$, $K=\R^2_+,$ and consider $F: \R\rightrightarrows \R^2$ given by 
$$F(x):=\begin{cases}
\{(2,0)^\top,(0,2)^\top\} &  \textrm{ if } x=0, \\ 
\{(1,-1)^\top, (0,2)^\top\} & \textrm{ if } x=1, \\
\{(2,0)^\top,(-1,1)^\top\} & \textrm{ if } x=2,\\
\emptyset & \textrm{ otherwise.}
\end{cases} $$
%
Then, it is easy to check that $ \argminb \eqref{sp}=\Omega $ and that the assumptions of Theorems \ref{thm:fdvp finite preimage} and \ref{thm:fdvpfinite} are fulfilled. Now, assume that \textup{(mFDVP)} holds. Then, there exists $p\in\N$ such that  $0\in\argminb \eqref{sp} = \argmin_x \eqref{vpp}$ holds. Therefore, there exist elements $\bar{y}^i\in F(0)$  for $i\in[p]$ such that $(0,\bar{y}^1,\ldots,\bar{y}^p)\in \argmin \eqref{vpp}$. If $\bar{y}^i= (2,0)^\top$ for all $i\in[p]$, we choose $x:=1$ and $y^i:= (1,-1)^\top$  for all $i\in[p]$. Else, we choose $x:=2$ and for all $i\in[p]$
$$y^i:=
\begin{cases}
 (2,0)^\top & \textrm{ if }\  \bar{y}^i=(2,0)^\top, \\
(-1,1)^\top  & \textrm{ if }\ \bar{y}^i= (0,2)^\top.
\end{cases}
%
$$
Then, in both cases we have $(x,y^1,\ldots,y^p)\in \gph F^p$ and 
$$f^p(x,y^1,\ldots,y^p) \precneq_{K^p} f^p(0,\bar{y}^1,\ldots,\bar{y}^p), $$ which contradicts $(0,\bar{y}^1,\ldots,\bar{y}^p)\in \argmin \eqref{vpp}$. Therefore, $F$ does not satisfy \textup{(mFDVP)}.
\end{Example}

In the next theorem, for a convex set $A \subseteq \R^m,$ we denote by $\textup{ext}(A)$  the set of extreme points of $A.$

\begin{Theorem}
\label{thm:fdvppolytope}
Suppose that $F$ only takes polyhedral values  and that $\max\limits_{x\in \Omega} |\textup{ext}(F(x))|< + \infty.$ 
Then,
$$
\eqref{sp}\ \mbox{satisfies } \textup{(wFDVP)}. 
$$

\end{Theorem}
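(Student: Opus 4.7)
The plan is to adapt the argument of Theorem \ref{thm:fdvpfinite} by replacing the set of minimal elements of $F(\bar x)$ with the set of extreme points of $F(\bar x).$ Concretely, I would set $p := \max_{x \in \Omega} |\textup{ext}(F(x))|,$ which is finite by hypothesis. Thanks to Theorem \ref{thm:relations vectorization scheme} \ref{item:relations vectorization scheme_wmin}, only the inclusion $\wargmin \eqref{sp} \subseteq \wargmin_x \eqref{vpp}$ needs attention.

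The key reduction is the following equivalence: for any set $B \subseteq \R^m,$
\begin{equation*}
F(\bar{x}) \subseteq B + \Int K \;\;\Longleftrightarrow\;\; \textup{ext}(F(\bar{x})) \subseteq B + \Int K.
\end{equation*}
The implication ``$\Rightarrow$'' is trivial. For ``$\Leftarrow$,'' I would use that $F(\bar x)$ is a polytope (being polyhedral and compact), so every $y \in F(\bar x)$ is a convex combination $y = \sum_{i=1}^{\bar p} \lambda_i \bar y^i$ of the extreme points $\bar y^1,\dots,\bar y^{\bar p}$ of $F(\bar x).$ Writing $\bar y^i = a^i + k^i$ with $a^i \in B$ and $k^i \in \Int K,$ I need $\sum \lambda_i a^i \in B$ and $\sum \lambda_i k^i \in \Int K.$ The first holds when $B$ is convex; the second holds since $\Int K$ is a convex set. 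In our application we will take $B = F(x),$ which is also a polytope (hence convex) under the hypothesis, so both requirements are met.

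With this equivalence in hand, the rest mimics Theorem \ref{thm:fdvpfinite}. Fix $\bar x \in \wargmin \eqref{sp}$ and list $\textup{ext}(F(\bar x)) = \{\bar y^1,\dots,\bar y^{\bar p}\}$ with $\bar p \leq p.$ I would claim that $(\bar x, \bar y^1,\dots,\bar y^{\bar p}) \in \wargmin(\mathcal{VP}_{\bar p}).$ Indeed, if this failed there would exist $(x,y^1,\dots,y^{\bar p}) \in \gph F^{\bar p}$ with $y^i \prec_K \bar y^i$ for each $i \in [\bar p],$ hence $\bar y^i \in F(x) + \Int K$ for every $i.$ That is $\textup{ext}(F(\bar x)) \subseteq F(x) + \Int K,$ and the equivalence above upgrades this to $F(\bar x) \subseteq F(x) + \Int K,$ i.e.\ $F(x) \prec^\ell_K F(\bar x),$ contradicting $\bar x \in \wargmin \eqref{sp}.$ Proposition \ref{prop:sset monot} \ref{item:monot p} then lifts the conclusion from $(\mathcal{VP}_{\bar p})$ to $\eqref{vpp},$ giving $\bar x \in \wargmin_x \eqref{vpp}$ as required.

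The only non-routine step is the ``$\Leftarrow$'' direction of the extreme-point equivalence, and even that hinges solely on the convexity of both $F(\bar x)$ and $F(x),$ which is guaranteed by the polyhedrality assumption combined with the standing compactness assumption. No new technical tools are needed beyond what is already used for Theorem \ref{thm:fdvpfinite}.
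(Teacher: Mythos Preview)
Your proposal is correct and follows essentially the same route as the paper's proof: set $p := \max_{x\in\Omega}|\textup{ext}(F(x))|$, use Theorem \ref{thm:relations vectorization scheme} \ref{item:relations vectorization scheme_wmin} to reduce to the inclusion $\wargmin\eqref{sp}\subseteq\wargmin_x\eqref{vpp}$, list the extreme points of $F(\bar x)$, argue by contradiction, and use convexity of $F(x)$ together with convexity of $\Int K$ to pass from domination of the extreme points to domination of the full polytope, then lift via Proposition \ref{prop:sset monot} \ref{item:monot p}. The only cosmetic difference is that you isolate the ``extreme-point equivalence'' as a separate observation, whereas the paper carries out the convex-combination computation inline; both rely on exactly the same facts (compactness plus polyhedrality of $F(\bar x)$ for the Minkowski representation, and convexity of $F(x)$ for the inclusion $\sum\lambda_i a^i\in F(x)$).
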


\begin{proof}
Similarly to the proof of Theorem \ref{thm:fdvpfinite}, we set $p:= \max\limits_{x\in \Omega} |\textup{ext}(F(x))|.$ Then, our assumption implies that $p\in \N.$ 
By Theorem \ref{thm:relations vectorization scheme} \ref{item:relations vectorization scheme_wmin}, it remains to show 
$ \wargmin \eqref{sp}\subseteq \wargmin_x \eqref{vpp}$. For that, fix
 $\bar{x} \in \wargmin \eqref{sp}.$ Then, we can find $\bar{p} \leq p$  and elements $\bar{y}^1,\ldots,\bar{y}^{\bar{p}} \in \R^m$ such that
$$\textup{ext}(F(\bar{x})) = \left\{\bar{y}^1,\ldots,\bar{y}^{\bar{p}} \right\}.$$ We now claim that $(\bar{x},\bar{y}^1,\ldots,\bar{y}^{\bar{p}} ) \in \wargmin (\mathcal{VP}_{\bar{p}}).$ Indeed, otherwise there exists $(x,y^1,\ldots,y^{\bar{p}}) \in \gph F^{\bar{p}}$ such that 
\begin{equation}\label{eq:yilessbaryi2}
\forall \; i \in [\bar{p}]: y^i \prec_K \bar{y}^i.
\end{equation} Take now $\bar{y} \in F(\bar{x}).$ Because $F(\bar{x})$ is a bounded polyhedron, we have the existence of $\lambda \in \R_+^{\bar{p}}$ such that $\sum_{i=1}^{\bar{p}} \lambda_i = 1$ and $\bar{y} = \sum_{i=1}^{\bar{p}} \lambda_i \bar{y}^i.$ It then follows from \eqref{eq:yilessbaryi2} that 
$$
\bar{y}   \in   \sum\limits_{i=1}^{\bar{p}} \lambda_i y^i+ \Int K 
          \subseteq  \conv \left\{y^1,\ldots, y^{\bar{p}} \right\} + \Int K 
          \subseteq   F(x) + \Int K.      
$$ 
%
Since $\bar{y}$ was arbitrarily chosen in $F(\bar{x}),$  we derive $F(\bar x)\subseteq F(x)+\Int K$ which is a contradiction to the fact that $\bar{x} \in \wargmin \eqref{sp}.$ Therefore  $\bar{x} \in \wargmin_{x} (\mathcal{VP}_{\bar{p}}),$ and the statement of the theorem follows from Proposition \ref{prop:sset monot} \ref{item:monot p}.
\end{proof}

The following example shows that for this class $\textup{(mFDVP)}$ is not fulfilled either.
\begin{Example} We use the same definitions as in Example \ref{ex:violation mFDVP} and study the set-valued mapping
$$\tilde{F}(x):=\conv F(x). $$
Then, $\tilde{F}$ takes only bounded  polyhedral values and the assumptions of Theorem \ref{thm:fdvppolytope} are fulfilled. For $\tilde{F}$ it is also easy to check that $ \argminb \eqref{sp}=\Omega $. Now, assume that \textup{(mFDVP)} holds. Then, there exists $p\in\N$ such that  
$0\in\argminb \eqref{sp} = \argmin_x \eqref{vpp}$ 
holds. Therefore, there exist elements $\bar{y}^i\in \tilde{F}(0)$ such that $(0,\bar{y}^1,\ldots,\bar{y}^p)\in \argmin \eqref{vpp}$. If $\bar{y}^i=(2,0)^\top$ for all $i\in[p]$, we choose $x:=1$ and 
$y^i:=(1,-1)^\top 
\in \tilde{F}(1)$ for all $i\in[p]$.
Else, we choose $x:=2$, and since
$\bar{y}^i \in \tilde{F}(0)= 
\conv  \left\{(2,0)^\top, (0,2)^\top \right\}$ we have
$$ \bar{y}^i= \begin{pmatrix} 2-2\lambda_i \\ 2\lambda_i \end{pmatrix}$$ for some $\lambda_i\in [0,1]$ for all $i\in [p]$. Now, we choose for $i\in[p]$
$$y^i:= \begin{pmatrix} 2-2\lambda_i \\ \frac{2}{3}\lambda_i \end{pmatrix} \in \tilde{F}(2).$$
Then, in both cases we have $(x,y^1,\ldots,y^p)\in \gph \tilde{F}^p$ and 
$$f^p(x,y^1,\ldots,y^p) \precneq_{K^p} f^p(0,\bar{y}^1,\ldots,\bar{y}^p), $$ which contradicts $(0,\bar{y}^1,\ldots,\bar{y}^p)\in \argmin \eqref{vpp}$. Therefore, $\tilde{F}$ does not satisfy \textup{(mFDVP)}.
\end{Example}

Now we proceed to the final result of the section. It establishes that convex set optimization problems are indeed equivalent to a finite dimensional convex multiobjective optimization problem.  For proving our result  we need to recall two additional concepts. The first of these is that of the normal cone to a nonempty convex set $\Omega \subseteq \R^n$ at some point $\bar{x} \in \Omega.$ Formally, this is the set 
$$N(\bar{x},\Omega) := \{u\in  \R^n\mid \forall\; x\in \Omega:  u^\top (x-\bar{x})\leq 0\}.$$  The second concept we introduce is that of  the coderivative of a set-valued mapping with a convex graph:
\begin{Definition} \label{defcord}(\cite[Definition 2.56]
{MordukhovichNam2014path}) 
Let $F:\R^n \rightrightarrows \R^m$ be a set-valued mapping with convex graph and let $(\bar{x}, \bar{y}) \in \gph F.$ The coderivative of $F$ at $(\bar{x},\bar{y})$ is the set-valued mapping $D^* F(\bar{x},\bar{y}):\R^m \rightrightarrows \R^n $ with the values
\begin{equation*}
D^{*}F(\bar{x},\bar{y})(v)=\big\{u\in \R^n\mid(u,-v)\in N\big((\bar{x},\bar{y}),\gph F\big)\}.
\label{cod}%
\end{equation*}
\end{Definition} 
Explicit calculations of the coderivative in some particular settings can be found, for instance, in \cite[Section 2.6]
{MordukhovichNam2014path}.

Next, as preparation for the proof of the final result 
, we present a sequence of lemmata. The first of these is the classic normal cone intersection formula.
\begin{Lemma}(\cite[Corollary 2.19]{MordukhovichNam2014path})\label{thm:ncif}
Let $\Omega_i \subseteq \R^n$ for $i \in [p]$ be nonempty convex sets and let $\bar{x} \in \bigcap
_{i = 1}^p \Omega_i.$ Furthermore, assume the validity of the following constraint qualification condition:
$$\left[\sum_{i = 1}^p u^i = 0,\quad u^i \in N(\bar{x}, \Omega_i)\right] \Longrightarrow \left[ \forall \; i \in [p]:\ u^i = 0\right].$$ Then, the following intersection formula holds:
$$N\left(\bar{x}, \bigcap_{i = 1}^p \Omega_i\right) = \sum_{i = 1}^p N(\bar{x},\Omega_i),$$
where the sum on the right hand side is the Minkowski sum.
\end{Lemma}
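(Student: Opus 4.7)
The plan is to prove the two inclusions separately. The easy direction $\sum_{i=1}^p N(\bar{x}, \Omega_i) \subseteq N(\bar{x}, \bigcap_{i=1}^p \Omega_i)$ follows directly from the definition: given $u = \sum_{i=1}^p u^i$ with $u^i \in N(\bar{x},\Omega_i)$ and $x \in \bigcap_{i=1}^p \Omega_i,$ we have $u^\top(x-\bar{x}) = \sum_{i=1}^p (u^i)^\top(x-\bar{x}) \leq 0,$ so no constraint qualification is needed for this half.

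For the nontrivial inclusion I would argue by strict separation. Set $S := \sum_{i=1}^p N(\bar{x}, \Omega_i),$ which is clearly a convex cone, and first verify that the CQ forces $S$ to be closed. Suppose $s^k = \sum_{i=1}^p u^{i,k} \to \bar{s}$ in $S,$ and set $\alpha_k := \max_{i \in [p]} \|u^{i,k}\|.$ If $\{\alpha_k\}$ is bounded, a standard subsequence argument combined with closedness of each $N(\bar{x}, \Omega_i)$ places $\bar{s}$ in $S.$ If $\alpha_k \to \infty$ along a subsequence, renormalizing produces accumulation points $v^i \in N(\bar{x}, \Omega_i)$ (each normal cone is positively homogeneous) with $\max_i \|v^i\| = 1$ and $\sum_{i=1}^p v^i = \lim s^k/\alpha_k = 0,$ directly contradicting the CQ.

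Suppose now toward contradiction that some $u \in N\bigl(\bar{x}, \bigcap_{i=1}^p \Omega_i\bigr) \setminus S$ exists. Since $S$ is closed and convex, strict separation yields $v \in \R^n$ with $v^\top u > 0$ and $v^\top s \leq 0$ for every $s \in S.$ Taking $s$ with only one nonzero summand lying in $N(\bar{x},\Omega_i)$ reveals that $v$ belongs to the polar of each $N(\bar{x},\Omega_i),$ which for a convex set coincides with $\cl T(\bar{x},\Omega_i) = \cl\bigl(\R_+(\Omega_i - \bar{x})\bigr).$ The last step is to promote this to $v \in \cl T\bigl(\bar{x}, \bigcap_{i=1}^p \Omega_i\bigr)$ via the primal tangent-cone intersection identity, which holds under the same CQ; combined with $u \in N\bigl(\bar{x}, \bigcap_i \Omega_i\bigr),$ this yields $v^\top u \leq 0,$ a contradiction.

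The main obstacle is precisely the tangent-cone intersection identity invoked at the end, which is dual to the statement being proved and therefore calls for an independent justification (typically via another separation argument at the level of epigraphs or using Dubovitskii--Milyutin-type cone calculus). A cleaner route that avoids this near-circularity is to recognize the claim as the Moreau--Rockafellar subdifferential sum rule applied to the indicator functions $f_i := \delta_{\Omega_i},$ since $\partial f_i(\bar{x}) = N(\bar{x},\Omega_i),$ $\sum_{i=1}^p f_i = \delta_{\bigcap_{i=1}^p \Omega_i},$ and the stated CQ is exactly the standard qualification for that sum rule; one can then quote it directly from convex analysis rather than re-deriving the dual identity by hand.
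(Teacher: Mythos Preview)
The paper does not prove this lemma at all: it is stated with a citation to \cite[Corollary 2.19]{MordukhovichNam2014path} and used as a black box, so there is no ``paper's own proof'' to compare your attempt against.

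As for your sketch itself, the easy inclusion and the closedness of $S=\sum_i N(\bar{x},\Omega_i)$ under the CQ are both fine and standard. You correctly diagnose the weak point in your separation argument: deducing $v\in\cl T\bigl(\bar{x},\bigcap_i\Omega_i\bigr)$ from $v\in\bigcap_i\cl T(\bar{x},\Omega_i)$ is, by polarity, exactly equivalent to the inclusion $N\bigl(\bar{x},\bigcap_i\Omega_i\bigr)\subseteq S$ you are trying to establish (once $S$ is closed, $S=S^{\circ\circ}=\bigl(\bigcap_i\cl T(\bar{x},\Omega_i)\bigr)^\circ$), so invoking the tangent-cone identity here is genuinely circular, not merely inelegant. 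Your proposed escape via the subdifferential sum rule for $\delta_{\Omega_1}+\cdots+\delta_{\Omega_p}$ is valid in spirit---this is essentially how Mordukhovich--Nam derive the result---but note that the classical Moreau--Rockafellar hypothesis is a relative-interior condition, not the dual CQ stated here; you would be appealing to the version of the sum rule proved under the basic qualification condition, which is precisely the cited Corollary 2.19. In other words, your fix amounts to citing the same source the paper cites, which is perfectly acceptable given that the paper treats this lemma as an imported tool rather than something to be proved.
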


\begin{Lemma}\label{lem:ncintGi}
Suppose that $\Omega \subseteq \Int \dom F $, that $ \gph F$ is convex, and that for some $p\in \N$ an element  $ (\bar{x},\bar{y}^1,\ldots,\bar{y}^p ) \in \gph F^p$ with  $\bar{x} \in \Omega$ is given. Then, $\gph F^p$ is convex and 
\begin{equation*}
N\left((\bar{x},\bar{y}^1,\ldots,\bar{y}^p\right),\gph F^p) =   \sum\limits_{i=1}^p \left\{ \left(u,0,\ldots,\smash[b] {\smallunderbrace{v}_{\mathclap{(i + 1)^{th} \textrm{ term}} }},\ldots,0\right) \mid \left(u,v\right) \in N\left((\bar{x},\bar{y}^i\right),\gph F)\right\}.
\end{equation*}
\end{Lemma}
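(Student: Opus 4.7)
The plan is to realize $\gph F^{p}$ as a finite intersection of convex sets each of which is a lift of $\gph F$, compute their normal cones, verify the constraint qualification, and then apply the intersection formula from Lemma~\ref{thm:ncif}.

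More precisely, for each $i\in[p]$ I would set
\[
G_i:=\bigl\{(x,y^1,\ldots,y^p)\in\R^n\times (\R^m)^p \,\big|\, (x,y^i)\in\gph F\bigr\},
\]
so that $\gph F^p=\bigcap_{i=1}^p G_i$. Each $G_i$ is the preimage of the convex set $\gph F$ under the linear (surjective) projection $P_i(x,y^1,\ldots,y^p):=(x,y^i)$, hence convex, which also yields the convexity of $\gph F^{p}$. A direct computation from the definition of the normal cone (using the surjectivity of $P_i$, or a one-line argument checking the defining inequality on each component separately) shows that
\[
N\bigl((\bar x,\bar y^1,\ldots,\bar y^p),G_i\bigr)=\Bigl\{\bigl(u,0,\ldots,\smash{\smallunderbrace{v}_{\mathclap{(i+1)^{\text{th}}}}},\ldots,0\bigr)\,\Big|\,(u,v)\in N\bigl((\bar x,\bar y^i),\gph F\bigr)\Bigr\}.
\]

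The second step is to check the qualification condition of Lemma~\ref{thm:ncif}. Assume $\sum_{i=1}^p u^i=0$ with $u^i=(w^i,0,\ldots,v^i,\ldots,0)\in N((\bar x,\ldots),G_i)$ and $(w^i,v^i)\in N((\bar x,\bar y^i),\gph F)$. Looking at the $(i+1)^{\text{th}}$ block, which only $u^i$ populates, the sum forces $v^i=0$ for every $i\in[p]$. Hence $(w^i,0)\in N((\bar x,\bar y^i),\gph F)$, which by the definition of the normal cone means $w^{i\top}(x-\bar x)\le 0$ for all $x\in\dom F$, i.e.\ $w^i\in N(\bar x,\dom F)$. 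This is exactly the place where the assumption $\Omega\subseteq\Int\dom F$ is used: since $\bar x\in\Omega$ lies in the interior of $\dom F$, we get $N(\bar x,\dom F)=\{0\}$ and therefore $w^i=0$ for each $i$, so every $u^i$ vanishes and the qualification condition holds.

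With the CQ in hand, Lemma~\ref{thm:ncif} gives
\[
N\bigl((\bar x,\bar y^1,\ldots,\bar y^p),\gph F^p\bigr)=\sum_{i=1}^{p} N\bigl((\bar x,\bar y^1,\ldots,\bar y^p),G_i\bigr),
\]
and substituting the expression for $N(\cdot,G_i)$ obtained in the first step yields the desired formula. I expect the main obstacle to be the careful verification of the constraint qualification, as this is the sole place where the interiority hypothesis $\Omega\subseteq\Int\dom F$ enters and must be exploited to kill the "horizontal'' $\R^n$-components $w^i$; the normal cone computation for $G_i$ and the convexity of $\gph F^p$ are routine preimage-under-a-linear-map arguments.
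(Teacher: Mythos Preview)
Your proposal is correct and follows essentially the same route as the paper: decompose $\gph F^p$ as $\bigcap_{i=1}^p G_i$, compute $N(\cdot,G_i)$ as the lift of $N((\bar x,\bar y^i),\gph F)$, verify the constraint qualification using $\bar x\in\Int\dom F$, and apply Lemma~\ref{thm:ncif}. Your justification of the key step---rewriting $(w^i,0)\in N((\bar x,\bar y^i),\gph F)$ as $w^i\in N(\bar x,\dom F)=\{0\}$---is in fact slightly more explicit than the paper's, which leaves this as ``straightforward to verify.''
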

\begin{proof}
Define, for $i \in [p],$ the set 
\begin{equation*}
G_i:= \left\{(x,y^1,\ldots,y^p) \in \R^n\times \prod\limits_{j=1}^p \R^m  \mid (x,y^i) \in \gph F\right\}.
\end{equation*} Then, it is easy to check that each $G_i$ is convex and that $\gph F^p= \bigcap_{i=1}^p G_i.$ Thus, in particular, the set $\gph F^p$ is convex. The statement will be a consequence of the normal cone intersection formula in Lemma \ref{thm:ncif}. In order to apply this result, we need to check the following constraint qualification:
\begin{equation}\label{eq:cq}
\left[\sum\limits_{i=1}^p w^i = 0,\; w^i \in N\left( (\bar{x},\bar{y}^1,\ldots,\bar{y}^p ),G_i\right) \right] \Longrightarrow \left[\forall\; i\in [p]: w^i = 0\right].
\end{equation} We proceed to show that \eqref{eq:cq} holds. Indeed, it is easy to verify that 
\begin{equation}\label{eq:NconeGi}
 N\left( (\bar{x},\bar{y}^1,\ldots,\bar{y}^p ),G_i\right) = \left\{ \left(u,0,\ldots,\smash[b]{\smallunderbrace{v}_{\mathclap{(i + 1)^{th} \textrm{ term}} }},\ldots,0\right) \mid \left(u,v\right) \in N\left( (\bar{x},\bar{y}^i), \gph F \right)\right\}.
\end{equation} \vspace{1pt}

For $i \in [p],$ take now $w^i \in N\left((\bar{x},\bar{y}^1,\ldots,\bar{y}^p),G_i\right)$ and suppose that $\sum_{i=1}^p w^i = 0.$ Then, according to \eqref{eq:NconeGi}, we can find $\left(u^i,v^i\right) \in N\left( (\bar{x},\bar{y}^i ),\gph F\right)$ such that  $$w^i = \left(u^i,
0,\ldots,\smash[b]{\smallunderbrace{v^{i}}_{\mathclap{(i + 1)^{th} \textrm{ term}} }},\ldots,0\right).$$ \vspace{1pt} 

Thus, from $\sum_{i=1}^p w^i = 0,$ we deduce that $v^i = 0$ for each $i\in [p]$ and that $\sum_{i=1}^p u^i = 0.$ In particular, we get that 
\begin{equation}\label{eq:morducriteria}
\forall\; i\in [p]: (u^i,0) \in N\left( (\bar{x},\bar{y}^i ),\gph F\right).
\end{equation} 
Since $\bar{x} \in \Omega \subseteq \Int \dom F ,$ there exists a neighborhood of $\bar{x}$ on which $F$ has  nonempty values. Taking this into account, together with the convexity of $\gph F,$ it is straightforward  to verify that \eqref{eq:morducriteria} implies that $u^i = 0$ for every $i \in [p].$ Thus, we find that  for all $i\in[p ]$ it holds $w^i=0$, and so \eqref{eq:cq} holds. 

Applying now Lemma \ref{thm:ncif}, we obtain that 
\begin{equation}\label{eq:ncifapplied}
N\left( (\bar{x},\bar{y}^1,\ldots,\bar{y}^p ), \bigcap\limits_{i=1}^p G_i\right) =   \sum\limits_{i=1}^p N\left( (\bar{x},\bar{y}^1,\ldots,\bar{y}^p ),G_i\right).
\end{equation} 
The statement of the lemma follows then from \eqref{eq:NconeGi} and \eqref{eq:ncifapplied}.
\end{proof}

\begin{Lemma}\label{lem:ncintgphFomega}
Suppose that $\Omega$ is convex and satisfies $\Omega \subseteq \Int \dom F.$ Furthermore, assume that $\gph F$ is convex and that, for some $p\in \N,$ an element $ (\bar{x},\bar{y}^1,\ldots,\bar{y}^p ) \in \gph F^p$ with  $\bar{x} \in \Omega$ is given. Then, the set $\gph F^p \cap \left(\Omega \times \prod_{i=1}^p\R^m\right)$ is convex and
\begin{equation*}
N\left( (\bar{x},\bar{y}^1,\ldots,\bar{y}^p ),\gph F^p \cap \left(\Omega \times \prod\limits_{i=1}^p\R^m\right)\right) = N\left( (\bar{x},\bar{y}^1,\ldots,\bar{y}^p ),\gph F^p\right)  +  N\left(\bar{x},\Omega\right) \times \{0\}.
\end{equation*}
\end{Lemma}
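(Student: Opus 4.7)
The plan is to establish the formula by applying the normal cone intersection formula (Lemma \ref{thm:ncif}) to the pair of convex sets $\gph F^p$ and $\Omega \times \prod_{i=1}^p \R^m$. First, I would observe that $\gph F^p$ is convex by Lemma \ref{lem:ncintGi}, while $\Omega \times \prod_{i=1}^p \R^m$ is convex as a product of convex sets. Consequently the intersection $\gph F^p \cap \bigl(\Omega \times \prod_{i=1}^p \R^m\bigr)$ is convex, which settles the first assertion.

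For the normal cone formula, a direct calculation from the definition gives
$$N\left((\bar{x},\bar{y}^1,\ldots,\bar{y}^p),\ \Omega \times \prod\limits_{i=1}^p \R^m\right) = N(\bar{x},\Omega) \times \{0\},$$
since the components associated with $y^1,\ldots,y^p$ are unconstrained. To invoke Lemma \ref{thm:ncif}, I then need to check the constraint qualification
$$\bigl[\, w^1 + w^2 = 0,\ w^1 \in N((\bar{x},\bar{y}^1,\ldots,\bar{y}^p),\gph F^p),\ w^2 \in N(\bar{x},\Omega) \times \{0\}\,\bigr] \Longrightarrow w^1 = w^2 = 0.$$

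To verify it, I would write $w^2 = (\tilde{u},0,\ldots,0)$ with $\tilde{u} \in N(\bar{x},\Omega)$, and use the formula from Lemma \ref{lem:ncintGi} to decompose $w^1$ as a sum of $p$ terms, the $i$-th of which has the form $(u^i,0,\ldots,v^i,\ldots,0)$ with $v^i$ sitting in the $(i+1)$-th block and $(u^i,v^i) \in N((\bar{x},\bar{y}^i),\gph F)$. The equality $w^1 + w^2 = 0$ then forces $v^i = 0$ for every $i \in [p]$ and $\sum_{i=1}^p u^i + \tilde{u} = 0$. As already observed in the proof of Lemma \ref{lem:ncintGi}, the interiority hypothesis $\bar{x} \in \Int \dom F$ combined with $(u^i,0) \in N((\bar{x},\bar{y}^i),\gph F)$ implies $u^i = 0$ for each $i$. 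Hence $\tilde{u} = 0$ as well, so $w^1 = w^2 = 0$, and the constraint qualification holds.

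Applying Lemma \ref{thm:ncif} then yields the desired sum formula. The only real work here is the constraint qualification, but I do not expect any serious obstacle: it reuses precisely the same interiority argument already exploited in the proof of Lemma \ref{lem:ncintGi}, with the single additional summand $N(\bar{x},\Omega) \times \{0\}$ only contributing a component in the $x$-direction that is absorbed into $\sum_{i=1}^p u^i$.
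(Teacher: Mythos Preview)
Your proposal is correct and follows essentially the same approach as the paper: establish convexity, compute $N(\bar{x},\Omega)\times\{0\}$, verify the constraint qualification via the interiority hypothesis $\bar{x}\in\Int\dom F$, and apply Lemma~\ref{thm:ncif}. The only cosmetic difference is that the paper does not decompose $w^1$ via the sum formula of Lemma~\ref{lem:ncintGi}; it simply observes that $w^1+w^2=0$ forces $w^1=(u,0,\ldots,0)\in N((\bar{x},\bar{y}^1,\ldots,\bar{y}^p),\gph F^p)$ and then invokes the interiority argument directly for $F^p$ (using $\dom F^p=\dom F$) to conclude $u=0$, whereas you pass through the individual components $(u^i,0)\in N((\bar{x},\bar{y}^i),\gph F)$.
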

\begin{proof}
By Lemma \ref{lem:ncintGi}, we have that $\gph F^p$ is convex. Furthermore, since $\Omega$ is convex, it is then easy to see that $\gph F^p \cap \left(\Omega \times \prod_{i=1}^p\R^m\right)$ is convex.  For the computation of the normal cone, we will apply again the normal cone intersection formula from Lemma \ref{thm:ncif}. In order to do this, we first need to verify that the constraint qualification 
\begin{equation}\label{eq:cqfeasiblesetfp}
 N\left( (\bar{x},\bar{y}^1,\ldots,\bar{y}^p ),\gph F^p\right)\cap \left[- N\left( (\bar{x},\bar{y}^1,\ldots,\bar{y}^p ),\Omega \times \prod\limits_{i=1}^p \R^m\right)\right] = \{0\}
\end{equation} holds. Indeed, let $(u, v^1,\ldots,v^p)$ belong to that intersection. Then, taking into account that 
\begin{equation}\label{eq:nconeomegatimes0}
N\left( (\bar{x},\bar{y}^1,\ldots,\bar{y}^p ),\Omega \times \prod\limits_{i=1}^p \R^m\right) = N\left(\bar{x},\Omega\right) \times \{0\},  
\end{equation} we obtain that $u \in -N\left(\bar{x},\Omega\right) $ and $v^1,\ldots,v^p = 0.$ Thus, we get
\begin{equation*}
(u, 0,\ldots,0) \in  N\left( (\bar{x},\bar{y}^1,\ldots,\bar{y}^p ),\gph F^p\right).
\end{equation*} Similarly to the proof of Lemma \ref{lem:ncintGi}, because $\gph F^p$ is convex and $\Omega \subseteq \Int  \dom F,$ we obtain $u=0.$ This shows that \eqref{eq:cqfeasiblesetfp} holds. The statement of the lemma is then a consequence of Lemma \ref{thm:ncif} and \eqref{eq:nconeomegatimes0}.
\end{proof}

The ideas behind our next lemmata were introduced by the second author in his PhD thesis, see Step 1 and Step 2 in the proof of \cite[Theorem 4.6.1]{ernestdiss}. In the following, we need the notion of the epigraphical multifunction of $F.$ This is the set-valued mapping $\mathcal{E}_F: \R^n \rightrightarrows \R^m$ defined as $$\mathcal{E}_F(x):= F(x) + K.$$
\begin{Lemma}\label{lem:coderivatives equal}
Suppose that $\gph F$ is convex. Then, so is $\gph \mathcal{E}_F,$ and  for every point $(\bar x,\bar y)\in \gph F$ it holds
\begin{equation*}
\forall\;  v \in K^*: D^* \mathcal{E}_F(\bar{x},\bar{y})(v) = D^*F(\bar{x},\bar{y})(v).
\end{equation*}
\end{Lemma}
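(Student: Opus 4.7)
The plan is first to deal with convexity and then to reduce the coderivative identity to a normal cone identity by unwinding Definition \ref{defcord}. For the convexity claim, observe that
$$\gph \mathcal{E}_F = \gph F + \left(\{0\}\times K\right),$$
which is a Minkowski sum of two convex sets (since $K$ is a convex cone by assumption and $\gph F$ is convex by hypothesis), and hence convex. Also, since $0\in K$, we have $\gph F\subseteq \gph \mathcal{E}_F$, a fact that will be used repeatedly.

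For the coderivative identity, fix $v\in K^*$. By Definition \ref{defcord}, it suffices to show the equivalence
$$(u,-v)\in N\big((\bar x,\bar y),\gph \mathcal{E}_F\big) \iff (u,-v)\in N\big((\bar x,\bar y),\gph F\big).$$
For the ``$\Rightarrow$'' direction, I would simply invoke $\gph F\subseteq \gph \mathcal{E}_F$: since the defining inequality $u^\top(x-\bar x)-v^\top(y-\bar y)\leq 0$ must hold for all $(x,y)\in \gph \mathcal{E}_F$, it in particular holds for all $(x,y)\in \gph F$, so convex normality is preserved when passing to the smaller convex set.

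For the ``$\Leftarrow$'' direction, which is the substantive step but still short, let $(u,-v)\in N((\bar x,\bar y),\gph F)$ and pick an arbitrary $(x,y')\in \gph \mathcal{E}_F$. By definition of $\mathcal{E}_F$ we can write $y'=y+k$ with $y\in F(x)$ and $k\in K$. Then
$$u^\top(x-\bar x)-v^\top(y'-\bar y)=\underbrace{\bigl[u^\top(x-\bar x)-v^\top(y-\bar y)\bigr]}_{\leq 0\text{ since }(x,y)\in \gph F}-\underbrace{v^\top k}_{\geq 0\text{ since }v\in K^*,\, k\in K}\leq 0,$$
so $(u,-v)\in N((\bar x,\bar y),\gph \mathcal{E}_F)$, as required. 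The main (and only) delicate point is noticing that the hypothesis $v\in K^*$ is exactly what is needed to absorb the extra $K$-component in $\gph \mathcal{E}_F$; without it the reverse inclusion would fail. Combining both directions yields $D^*\mathcal{E}_F(\bar x,\bar y)(v)=D^*F(\bar x,\bar y)(v)$ for every $v\in K^*$, which completes the proof.
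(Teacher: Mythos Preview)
Your proof is correct and follows essentially the same route as the paper: both write $\gph \mathcal{E}_F = \gph F + (\{0\}\times K)$ for convexity, then verify the normal-cone equivalence via the decomposition $y'=y+k$ and the sign of $v^\top k$. The only cosmetic difference is that the paper establishes the slightly stronger intermediate identity $N((\bar x,\bar y),\gph \mathcal{E}_F)=N((\bar x,\bar y),\gph F)\cap(\R^n\times -K^*)$ (so it also records that every normal to $\gph \mathcal{E}_F$ has second component in $-K^*$), whereas you fix $v\in K^*$ from the start and prove exactly what the lemma requires.
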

\begin{proof}
From the definition of $\mathcal{E}_F$ we have that 
\begin{equation}\label{eq:epi F = gph F+cone}
\gph \mathcal{E}_F = \gph F + (\{0\}\times K).
\end{equation}
Thus, in particular, $\gph \mathcal{E}_F$ is also convex. Taking into account the definition of the coderivative, we see that the statement of the lemma  holds in case we can show 
\begin{equation}\label{eq: normal cones to graph and epi}
N\big((\bar{x},\bar{y}),\gph \mathcal{E}_F\big) = N\big((\bar{x},\bar{y}),\gph F\big)\cap (\R^m \times -K^*).
\end{equation}  
We now proceed to show that \eqref{eq: normal cones to graph and epi} holds. Suppose first that $(u,v) \in N\big((\bar{x},\bar{y}),\gph \mathcal{E}_F\big).$ Then, according to \eqref{eq:epi F = gph F+cone}, this is the same as 
\begin{equation}\label{eq: normal auxiliar 1}
\forall\; (x,y) \in \gph F,\ k \in K:   u^\top(x-\bar{x}) + v^\top (y+k-\bar{y}) \leq 0.
\end{equation} Taking into account that $\gph F$ is also convex and that $0\in K$, we can put $k = 0$  in \eqref{eq: normal auxiliar 1} to obtain that $(u,v) \in N\big((\bar{x},\bar{y}),\gph F\big).$ On the other hand, by substituting $x= \bar{x},y = \bar{y}$ in \eqref{eq: normal auxiliar 1}, we get $v^\top k \leq 0$ for every $k \in K.$ According to the definition of the dual cone, this implies that $v \in -K^*.$ Hence, 
$$N\big((\bar{x},\bar{y}),\gph \mathcal{E}_F\big) \subseteq N\big((\bar{x},\bar{y}),\gph F\big)\cap (\R^m \times -K^*).$$ In order to see the reverse inclusion, choose $(u,v) \in N\big((\bar{x},\bar{y}),\gph F\big)\cap (\R^m \times -K^*).$ Then, $v \in -K^*$ and 
$$\forall\; (x,y) \in \gph F:   u^\top(x-\bar{x}) + v^\top (y-\bar{y}) \leq 0.$$ It is then easy to see that this implies \eqref{eq: normal auxiliar 1}, and  together with  the convexity of  $\gph \mathcal{E}_F$  the statement follows.
\end{proof}

\begin{Lemma}\label{lem:Acompact}
Suppose that $\Omega \subseteq \Int \dom F $ and that $\gph F$ is convex. Furthermore, consider an element $\bar{x} \in \Omega$ together with the set 
\begin{equation}\label{eq:Bdef}
B:= \left\{v \in K^* \mid v^\top e  = 1\right\}.
\end{equation}
Then, the set 
\begin{equation}\label{eq:Adef}
A:= \bigcup_{\bar{y} \in \Min(F(\bar{x}),K)} D^*F(\bar{x},\bar{y})\left(B\right) 
\end{equation}
 is compact.
\end{Lemma}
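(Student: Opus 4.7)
The plan is to prove $A$ is compact by showing it is both bounded and closed.

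For boundedness, first note that $B$ is compact: since $e\in\Int K$ there exists $\delta>0$ with $e-\delta\mathbb B\subseteq K$, which gives $v^\top e\geq \delta\|v\|$ for all $v\in K^*$; in particular $\|v\|\leq 1/\delta$ on $B$, and closedness of $B$ comes from closedness of $K^*$. Moreover, $\Min(F(\bar x),K)\subseteq F(\bar x)$ is bounded. Since $\gph F$ is convex, $F(\bar x)$ is compact, and $\bar x\in\Omega\subseteq\Int\dom F$, the mapping $F$ is locally bounded around $\bar x$, so there are $r>0$ and $M>0$ with $F(\bar x+r\mathbb B)\subseteq M\mathbb B$. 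For an arbitrary $u\in A$, pick $\bar y\in\Min(F(\bar x),K)$ and $v\in B$ with $(u,-v)\in N((\bar x,\bar y),\gph F)$; evaluating the defining inequality $u^\top(x-\bar x)\leq v^\top(y-\bar y)$ at points $(\bar x+rh,y_h)\in\gph F$ with $\|h\|=1$ and $y_h\in F(\bar x+rh)$ yields $r\,u^\top h\leq \|v\|(M+\|\bar y\|)$, a bound uniform in all choices. Taking the supremum over unit $h$ gives a uniform bound on $\|u\|$.

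For closedness, take a sequence $u_k\to u$ with $u_k\in D^*F(\bar x,\bar y_k)(v_k)$, $\bar y_k\in\Min(F(\bar x),K)$ and $v_k\in B$. By compactness of $F(\bar x)$ and $B$, passing to subsequences, $\bar y_k\to \bar y\in F(\bar x)$ and $v_k\to v\in B\subseteq K^*$. Because $\gph F$ is convex, its normal cone mapping has closed graph, so $(u,-v)\in N((\bar x,\bar y),\gph F)$, i.e.\ $u\in D^*F(\bar x,\bar y)(v)$.

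The main obstacle is that $\Min(F(\bar x),K)$ need not be closed, so the limit $\bar y$ may fail to be minimal. The plan is to bridge this gap via the epigraphical mapping $\mathcal E_F$. By Proposition \ref{prop:domination property} there exists $\bar y'\in\Min(F(\bar x),K)$ with $k:=\bar y-\bar y'\in K$. Since $v\in K^*$, Lemma \ref{lem:coderivatives equal} gives $(u,-v)\in N((\bar x,\bar y),\gph\mathcal E_F)$. For every $(x,y)\in\gph\mathcal E_F$ one computes
$$u^\top(x-\bar x)-v^\top(y-\bar y')=\bigl[u^\top(x-\bar x)-v^\top(y-\bar y)\bigr]-v^\top k\leq 0,$$
using $v^\top k\geq 0$; the expression vanishes at $(\bar x,\bar y')\in\gph\mathcal E_F$, hence $(u,-v)\in N((\bar x,\bar y'),\gph\mathcal E_F)$. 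Applying Lemma \ref{lem:coderivatives equal} once more transfers this back to $F$, yielding $u\in D^*F(\bar x,\bar y')(v)\subseteq A$, which completes the closedness argument and with it the proof.
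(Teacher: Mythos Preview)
Your proof is correct. The closedness argument is essentially the paper's: extract convergent subsequences in $F(\bar x)$ and $B$, use closedness of the normal cone map, then pass to a minimal $\bar y'\preceq_K\bar y$ via the domination property and shift the normal vector there using $v\in K^*$. Your detour through $\mathcal E_F$ and Lemma~\ref{lem:coderivatives equal} is unnecessary, though---the inequality $u^\top(x-\bar x)-v^\top(y-\bar y')\leq 0$ holds directly for $(x,y)\in\gph F$ by the same one-line computation you wrote, without ever leaving $\gph F$; this is exactly how the paper argues.

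The boundedness argument, however, genuinely differs. The paper argues by contradiction: an unbounded sequence in $A$, after normalizing, produces $(\bar u,0)\in N((\bar x,\bar y),\gph F)$ with $\|\bar u\|=1$, which is impossible since $\bar x\in\Int\dom F$. Your route is constructive: plug test points $(\bar x+rh,y_h)\in\gph F$ into the normal cone inequality and read off a uniform bound on $u$. This is valid, but your assertion that $F$ is locally bounded near $\bar x$ is stated without proof; it does follow from convexity of $\gph F$, compactness of $F(\bar x)$ and $\bar x\in\Int\dom F$ (e.g.\ via a simplex argument, or by noting that an unbounded sequence $(x_k,y_k)\in\gph F$ with $x_k\to\bar x$ would, after a convex combination with a fixed bounded selection on the opposite side of $\bar x$, force $F(\bar x)$ to be unbounded). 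Your argument in fact only needs the weaker statement that a \emph{bounded selection} $y_h\in F(\bar x+rh)$ exists, which comes immediately from writing $\bar x+rh$ as a convex combination of finitely many fixed points $a_i\in\dom F$ and taking the corresponding combination of fixed $b_i\in F(a_i)$. Either way, the gap is easily filled.
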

\begin{proof}
It suffices to show that $A$ is both closed and bounded. In order to see the closedness of $A,$ let $\{u^k\}_{k\in \N} \subseteq A$ be such that $u^k \to \bar{u}.$ Hence, taking into account the definition of the coderivative, there are sequences $\{y^k\}_{k\in \N} \subseteq \Min(F(\bar{x}),K)$ and $\{v^k\}_{k\in \N} \subseteq B$ such that 
\begin{equation}\label{eq:bounded auxiliar 1}
\left(u^k,-v^k\right) \in N\left((\bar{x},y^k),\gph F\right).
\end{equation} Since $F(\bar{x})$ is compact, we can assume without loss of generality that $y^k \to \hat{y} \in F(\bar{x}).$ On the other hand, from \cite[Lemma 2.4]{DureaTammer2009} it follows that the set $B$ is compact. Thus, without loss of generality we can also assume that  $v^k \to \bar{v} \in B.$ Furthermore, it is well known that the set-valued mapping $N(\cdot, \gph F)$ is closed at any point. Therefore, taking the limit when $k \to +\infty$ in \eqref{eq:bounded auxiliar 1}, we obtain 
$$(\bar{u},-\bar{v}) \in N\big((\bar{x},\hat{y}),\gph F\big),$$ or equivalently, $\bar{u} \in D^*F(\bar{x},\hat{y})(\bar{v}).$ Since $F(\bar{x})$ is compact, we can apply Proposition \ref{prop:domination property} to obtain an element $\bar{y} \in \Min(F(\bar{x}),K)$ such that $\bar{y}\preceq_K \hat{y}.$ Next, because $\bar{y} - \hat{y} \in -K,$ we have $\bar{v}^\top (\bar{y} - \hat{y}) \leq 0.$ Then, for every $(x,y) \in \gph F,$ we find that
$$\bar{u}^\top (x - \bar{x}) \leq \bar{v}^\top (y - \hat{y}) = \bar{v}^\top (y- \bar{y}) + \bar{v}^\top (\bar{y} - \hat{y}) \leq \bar{v}^\top (y - \bar{y}).$$ Thus,  $\bar{u} \in D^*F(\bar{x},\bar{y})(\bar{v})$ and the closedness of $A$ follows.

Suppose now that $A$ is not bounded. Then, we can find an unbounded sequence $\{u^k\}_{k\in \N} \subseteq A.$ Let  $\{y^k\}_{k\in \N} \subseteq \Min(F(\bar{x}),K)$ and $\{v^k\}_{k\in \N} \subseteq B$ be the sequences that satisfy \eqref{eq:bounded auxiliar 1}, and without loss of generality let us assume that $y^k \to \bar{y} \in F(\bar{x})$ and $v^k \to\bar{v} \in B$. Then, we also have
\begin{equation}\label{eq:bounded auxiliar 2}
\left(\frac{u^k}{\|u^k\|},-\frac{v^k}{\|u^k\|}\right) \in N\big((\bar{x},y^k),\gph F\big).
\end{equation} Without loss of generality we can now assume that the sequence $\left\{\frac{u^k}{\|u^k\|} \right\}_{k \in \N} $ converges to some element $\bar{u} \in \R^n$ \ with $\|\bar{u}\|=1$. Then, taking the limit when $k \to + \infty$ in \eqref{eq:bounded auxiliar 2}, we get, again due to the closedness of the set-valued mapping $N(\cdot, \gph F)$,  
\begin{equation}\label{eq:morducrit2}
(\bar{u},0) \in N\big((\bar{x},\bar{y}),\gph F\big).
\end{equation} Similarly to the proof of Lemma \ref{lem:ncintGi}, since $F$ is well defined in a neighborhood of $\bar{x},$ the inclusion \eqref{eq:morducrit2} implies that $\bar{u} = 0.$ This contradicts  the fact that $\|\bar{u}\|=1.$
\end{proof}

\begin{Lemma}\label{lemm:oc}
Suppose that $\Omega$ is convex and satisfies $\Omega \subseteq \Int \dom F.$ Furthermore, assume that $\gph F$ is convex and that $F$ is locally bounded at a point  $\bar{x} \in \Omega.$ Then, 
\begin{equation*}
\bar{x} \in \wargmin \eqref{sp} \Longleftrightarrow 0 \in \cl \conv\left(\bigcup_{\bar{y} \in \Min(F(\bar{x}),K)} D^* F(\bar{x},\bar{y})\left( B\right)\right) + N(\bar{x},\Omega), 
\end{equation*} 
where  $B$ is given in \eqref{eq:Bdef}.
\end{Lemma}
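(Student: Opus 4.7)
The plan is to reduce the weak-minimality condition in \eqref{sp} to a scalar convex minimization problem and then read off the optimality condition from Fermat's rule combined with standard convex subdifferential calculus. For each $v \in K^*$ introduce the marginal function
\[
\varphi_v(x) := \inf\{v^\top y \mid y \in F(x)\},
\]
which is convex in $x$ because $\gph F$ is convex, finite and continuous on a neighborhood of $\bar{x}$ because $\bar{x} \in \Int \dom F$ and $F(\bar{x})$ is compact, and continuous in $v \in \R^m$ for each such $x$ (as the infimum of a continuous family parametrized over the compact set $F(x)$). Set
\[
g(x) := \sup_{v \in B}\bigl[\varphi_v(x) - \varphi_v(\bar{x})\bigr];
\]
then $g$ is convex, finite near $\bar{x}$, and $g(\bar{x}) = 0$. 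The key scalarization is
\[
F(\bar{x}) \subseteq F(x) + \Int K \;\Longleftrightarrow\; \exists\, \alpha > 0 \ \forall\, v \in B:\ \varphi_v(x) \leq \varphi_v(\bar{x}) - \alpha,
\]
which I would prove by first using the compactness of $F(\bar{x})$ to rewrite the left-hand side as $F(\bar{x}) - \alpha e \subseteq F(x) + K$ for some $\alpha > 0$, and then scalarizing this inclusion via the support-function identities $\sigma_{F(x)+K}(-v) = -\varphi_v(x)$ for $v \in K^*$ (since $\sigma_K(-v) = 0$) and $\sigma_{F(\bar{x}) - \alpha e}(-v) = -\varphi_v(\bar{x}) + \alpha v^\top e$, finally using the positive homogeneity $\varphi_{\lambda v} = \lambda \varphi_v$ for $\lambda \geq 0$ to reduce the range of $v$ to $B$. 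Consequently $\bar{x} \in \wargmin \eqref{sp}$ if and only if $g \geq 0 = g(\bar{x})$ on $\Omega$, i.e.\ $\bar{x}$ is a global minimizer of $g$ over $\Omega$.

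Because $g$ is convex and $\bar{x} \in \Int \dom g$, the standard Fermat rule for convex minimization over a convex set converts the preceding condition to $0 \in \partial g(\bar{x}) + N(\bar{x}, \Omega)$. It then suffices to show $\partial g(\bar{x}) = \cl \conv A$, where $A$ is the set from Lemma \ref{lem:Acompact}. For this I combine two standard formulas. First, the Ioffe--Tikhomirov/Valadier max-subdifferential formula---whose hypotheses hold since $B$ is compact (by \cite[Lemma 2.4]{DureaTammer2009}, invoked in the proof of Lemma \ref{lem:Acompact}), $v \mapsto \varphi_v(x) - \varphi_v(\bar{x})$ is continuous, and every $v \in B$ is an active index at $\bar{x}$ (all give value $0 = g(\bar{x})$)---yields
\[
\partial g(\bar{x}) = \cl \conv \bigcup_{v \in B} \partial \varphi_v(\bar{x}).
\]
Second, the classical convex marginal-function formula---valid because $\gph F$ is convex and $\bar{x} \in \Int \dom F$---gives $\partial \varphi_v(\bar{x}) = D^* F(\bar{x}, \bar{y})(v)$ for \emph{every} $\bar{y}$ in $M_v(\bar{x}) := \{y \in F(\bar{x}) \mid v^\top y = \varphi_v(\bar{x})\}$. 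The identification with $A$ is then completed by two observations: (i) by Proposition \ref{prop:domination property}, any $\bar{y} \in M_v(\bar{x})$ is dominated by some $\bar{y}' \in \Min(F(\bar{x}), K)$, which itself lies in $M_v(\bar{x})$ because $v \in K^*$ forces $v^\top \bar{y}' \leq v^\top \bar{y}$; and (ii) conversely, for $\bar{y} \in \Min(F(\bar{x}), K) \setminus M_v(\bar{x})$ the set $D^* F(\bar{x}, \bar{y})(v)$ is empty, since testing the defining inclusion $(u,-v) \in N((\bar{x}, \bar{y}), \gph F)$ at $(\bar{x}, y)$ for any $y \in F(\bar{x})$ with $v^\top y < v^\top \bar{y}$ would give the contradiction $v^\top \bar{y} \leq v^\top y$. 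Together these imply $\bigcup_{v \in B} \partial \varphi_v(\bar{x}) = A$, hence $\partial g(\bar{x}) = \cl \conv A$.

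The main obstacle is technical rather than conceptual: supplying both subdifferential calculus rules with the appropriate convex-analytic hypotheses---continuity of $v \mapsto \varphi_v(x)$ for the max-formula, and the constraint qualification $\bar{x} \in \Int \dom F$ for the marginal-function formula. Both are guaranteed by the standing assumptions, so what remains is bookkeeping with normal cones to $\gph F$, the positive homogeneity of the coderivative in its dual argument, and the pointedness of $K$ used to transfer minimizers from $\WMin$ to $\Min$.
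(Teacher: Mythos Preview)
Your proposal is correct but takes a genuinely different route from the paper. The paper's own proof is essentially a one-line citation: it invokes \cite[Theorem 6.5 $(i)$]{bouzaquintanatuantammer2020}, which directly delivers the equivalence with $D^*\mathcal{E}_F$ and $\partial \psi_e(0)$ in place of $D^*F$ and $B$, and then uses Lemma \ref{lem:coderivatives equal} together with the identity $\partial \psi_e(0) = B$ from \cite[Lemma 2.4]{DureaTammer2009} to make the cosmetic substitutions. In other words, the paper treats this lemma as a corollary of a heavy external optimality result for set optimization, and the bulk of the paper's proof is devoted to checking that the hypotheses of that external theorem are met.

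Your approach, by contrast, is self-contained convex analysis: you scalarize weak minimality in \eqref{sp} to the minimization of a sup-function $g$ over $\Omega$, and then compute $\partial g(\bar{x})$ via the Valadier max-formula and the convex marginal-function formula, matching the answer to the set $A$ from Lemma \ref{lem:Acompact}. This is more elementary and more transparent---it explains \emph{why} the coderivative appears at all---whereas the paper hides all the content in the cited theorem. The price you pay is the bookkeeping of verifying two calculus rules, and here one small correction is in order: your claim that $\varphi_v$ is ``finite and continuous on a neighborhood of $\bar{x}$ because $\bar{x} \in \Int \dom F$ and $F(\bar{x})$ is compact'' undersells what is needed. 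Compactness of $F(\bar{x})$ alone only gives finiteness \emph{at} $\bar{x}$; to get finiteness (hence continuity of the convex function $\varphi_v$, and hence applicability of the sum rule and Fermat's rule) on a full neighborhood you need precisely the local boundedness of $F$ at $\bar{x}$---the one hypothesis of the lemma you have not yet invoked. With that adjustment, the argument goes through.
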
 

\begin{proof} The statement will be a consequence of \cite[Theorem 6.5 $(i)$]{bouzaquintanatuantammer2020}, where it is shown that
\begin{equation*}
\bar{x} \in \wargmin \eqref{sp} \Longleftrightarrow 0 \in \cl \conv\left(\bigcup_{\bar{y} \in \Min(F(\bar{x}),K)} D^* \mathcal{E}_F(\bar{x},\bar{y})\left( \partial \psi_e (0)\right)\right) + N(\bar{x},\Omega),
\end{equation*}
with $\partial$ denoting the Fenchel subdifferential from convex analysis and $\psi_e$  denoting the so called Tammer functional with respect to $e$, see \cite{DureaTammer2009}. In the following we justify that all of the assumptions of this theorem are fulfilled. Indeed, it can be verified that our concept of weakly minimal solutions of \eqref{sp} is equivalent to the one in \cite[Definition 6.1 $(i)$]{bouzaquintanatuantammer2020}   because $F(x)$ is compact for every $x \in \Omega.$ The set-valued mapping $F$ is $\preceq^{(l)}_K$-convex (see \cite[Definition 2.4]{bouzaquintanatuantammer2020}) since $\gph \mathcal{E}_F$ is convex, see \cite[Remark 2.5]{bouzaquintanatuantammer2020}. Furthermore, $F$ is  $l$-bounded at $\bar{x}$ (see \cite[Definition 2.6 $(v)$]{bouzaquintanatuantammer2020}) because $F$ is locally bounded at $\bar{x}$. Finally, the values of $F$ are strongly $K$-compact (see \cite[Definition 4.5]{bouzaquintanatuantammer2020}) because they are in particular compact. The remaining assumptions are common.

From the equivalence established in the theorem it suffices to show that for each $\bar{y} \in F(\bar{x})$ we have $D^*\mathcal{E}_F(\bar{x},\bar{y})\left(\partial \psi_e (0)\right) = D^*F(\bar{x},\bar{y})(B).$ But this is a consequence of \cite[Lemma 2.4]{DureaTammer2009} and  Lemma \ref{lem:coderivatives equal}, where it is shown that $\partial \psi_e (0) = B$ and that $D^*\mathcal{E}_F(\bar{x},\bar{y}) =D^*F(\bar{x},\bar{y}),$ respectively. The statement follows. 
\end{proof}

We are now ready to establish the main theorem.
\begin{Theorem}
Suppose that $\Omega$ is convex and satisfies $\Omega \subseteq \Int \dom F.$ Furthermore, assume that $\gph F$ is convex, and that $F$ is locally bounded at any point in $\Omega.$ Then,
$$
 \eqref{sp} \mbox{ satisfies \textup{(wFDVP)} with }\ p:= n+1.
$$
\end{Theorem}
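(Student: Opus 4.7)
The first inclusion $\wargmin_x(\mathcal{VP}_{n+1}) \subseteq \wargmin\eqref{sp}$ is already contained in Theorem~\ref{thm:relations vectorization scheme}\ref{item:relations vectorization scheme_wmin}. So the plan is to establish the reverse inclusion, which amounts to the following: for each $\bar{x}\in \wargmin\eqref{sp}$, exhibit $\bar y^1,\ldots,\bar y^{n+1}\in F(\bar x)$ such that $(\bar x,\bar y^1,\ldots,\bar y^{n+1})\in \wargmin(\mathcal{VP}_{n+1})$.

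Fix $\bar x\in \wargmin\eqref{sp}$. Apply Lemma~\ref{lemm:oc} to obtain
$$0\in \cl\conv(A) + N(\bar x,\Omega), \quad A=\bigcup_{\bar y\in\Min(F(\bar x),K)} D^*F(\bar x,\bar y)(B),\ B=\{v\in K^*\mid v^\top e=1\}.$$
By Lemma~\ref{lem:Acompact}, $A$ is compact; since $A\subseteq \R^n$, the set $\conv A$ is also compact, hence closed, so $0\in \conv(A)+N(\bar x,\Omega)$. Carath\'eodory's theorem in $\R^n$ then yields
$$0=\sum_{i=1}^{n+1}\lambda_i u^i + \nu,\qquad \lambda_i\ge 0,\ \sum_{i=1}^{n+1}\lambda_i=1,\ u^i\in A,\ \nu\in N(\bar x,\Omega).$$
For each $i$, unpack $u^i\in D^*F(\bar x,\bar y^i)(v^i)$ with $\bar y^i\in \Min(F(\bar x),K)$ and $v^i\in B$. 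Define the scalarization multiplier $\mu^i:=\lambda_i v^i\in K^*$ and $\mu:=(\mu^1,\ldots,\mu^{n+1})\in (K^*)^{n+1}=(K^{n+1})^*$. A short argument shows $\mu\neq 0$: otherwise $\lambda_i v^i=0$ for all $i$, and $(v^i)^\top e=1$ forces $\lambda_i=0$ for all $i$, contradicting $\sum\lambda_i=1$.

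Now I verify the KKT condition for the linear scalarized problem $\min\{\mu^\top f^{n+1}(z)\mid z\in \gph F^{n+1}\cap (\Omega\times \prod_{i=1}^{n+1}\R^m)\}$ at $\bar z:=(\bar x,\bar y^1,\ldots,\bar y^{n+1})$. The gradient of the linear scalarized objective is $(0,\mu^1,\ldots,\mu^{n+1})$, so I must check
$$(0,-\mu^1,\ldots,-\mu^{n+1})\in N\!\left(\bar z,\ \gph F^{n+1}\cap\bigl(\Omega\times\!\!\prod_{i=1}^{n+1}\!\R^m\bigr)\right).$$
Combining the normal-cone formulas of Lemmas~\ref{lem:ncintGi} and \ref{lem:ncintgphFomega}, the right-hand side contains every vector of the form
$\sum_{i=1}^{n+1}(\tilde u^i,0,\ldots,\tilde v^i,\ldots,0)+(\tilde\nu,0,\ldots,0)$
with $(\tilde u^i,\tilde v^i)\in N((\bar x,\bar y^i),\gph F)$ and $\tilde\nu\in N(\bar x,\Omega)$. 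Choosing $\tilde u^i:=\lambda_i u^i$, $\tilde v^i:=-\mu^i=-\lambda_i v^i$, and $\tilde\nu:=\nu$ works, since $(\lambda_i u^i,-\lambda_i v^i)\in N((\bar x,\bar y^i),\gph F)$ by conicity of the normal cone, and the decomposition $\sum_i\lambda_i u^i+\nu=0$ yields exactly $(0,-\mu^1,\ldots,-\mu^{n+1})$.

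Thus $\bar z$ minimizes the scalarized objective on the feasible set. To finish, I invoke the easy direction of linear scalarization for weak minimality in vector optimization: if there existed a feasible $z$ with $f^{n+1}(z)\prec_{K^{n+1}} f^{n+1}(\bar z)$, then $\mu^\top(f^{n+1}(\bar z)-f^{n+1}(z))>0$ (since $\mu\in (K^{n+1})^*\setminus\{0\}$ and $f^{n+1}(\bar z)-f^{n+1}(z)\in \Int K^{n+1}$), contradicting the minimizer property. Hence $\bar z\in \wargmin(\mathcal{VP}_{n+1})$ and in particular $\bar x\in \wargmin_x(\mathcal{VP}_{n+1})$. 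The main technical obstacle is the bookkeeping in the KKT verification: one must correctly juggle Lemmas~\ref{lem:ncintGi}, \ref{lem:ncintgphFomega}, \ref{lemm:oc}, and Carath\'eodory so that the specific number $n+1$ of summands (dictated by $\dim \R^n$) matches the number of slack vectors in $(\mathcal{VP}_{n+1})$.
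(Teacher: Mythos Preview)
Your proof is correct and follows essentially the same approach as the paper's: apply Lemma~\ref{lemm:oc}, use Lemma~\ref{lem:Acompact} to drop the closure, invoke Carath\'eodory in $\R^n$ to produce $n+1$ points $\bar y^i$ and multipliers $\mu^i=\lambda_i v^i$, verify the KKT condition for the weighted-sum scalarization via Lemmas~\ref{lem:ncintGi} and~\ref{lem:ncintgphFomega}, and conclude weak minimality from $\mu\in (K^{n+1})^*\setminus\{0\}$. The only cosmetic differences are that the paper cites \cite[Theorem 4.14]{MordukhovichNam2014path} for the sufficiency of the convex KKT condition and \cite[Theorem 5.28]{Jahn2011} for the scalarization step, whereas you treat both as immediate.
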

\begin{proof} According to Theorem \ref{thm:relations vectorization scheme}, we just need to prove that, with $p:= n+1,$ 
 $$\wargmin \eqref{sp} \subseteq \wargmin_x \eqref{vpp}.$$ In order to see this, fix $\bar{x} \in \wargmin \eqref{sp}.$ Then, from Lemma \ref{lemm:oc} we have
\begin{equation}\label{eq:opt conf convex l}
0 \in \cl \conv A + N(\bar{x},\Omega), 
\end{equation} where $A$ is defined in \eqref{eq:Adef}. Now, according to Lemma \ref{lem:Acompact}, the set $A$ is compact. Thus, so is the set $\conv A.$ In particular, this implies that $\cl \conv A = \conv A.$ This fact, together with Caratheodory's theorem \cite[Theorem 2.1.6]{bazaraa2006}, allow us to deduce that \eqref{eq:opt conf convex l} \textrm{ holds} if and only if 
\begin{equation*}
    \begin{cases}
\exists \left\{\bar{y}^1,\ldots \bar{y}^{n+1}\right\} \subseteq \Min(F(\bar{x}),K),\; \left\{v^1,\ldots,v^{n+1}\right\} \subseteq B, \lambda \in \R^{n+1}_+: \\
\text{\textbullet} \;\;0 \in \sum\limits_{i=1}^{n+1} \lambda_iD^*F(\bar{x},\bar{y}^i)(v^i) +N(\bar{x},\Omega),\\
\text{\textbullet} \;\; \sum\limits_{i=1}^{n+1} \lambda_i = 1,
\end{cases}
\end{equation*}
or equivalently, 
\begin{equation*}
    \begin{cases}
\exists \left\{\bar{y}^1,\ldots \bar{y}^{n+1}\right\} \subseteq \Min(F(\bar{x}),K),\; \left\{v^1,\ldots,v^{n+1}\right\} \subseteq B, \;\left\{u^1,\ldots, u^{n+1}\right\} \subseteq \R^n, \lambda \in \R^{n+1}_+: \\
\text{\textbullet} \;\;0 \in \sum\limits_{i=1}^{n+1} \lambda_i u^i + N(\bar{x},\Omega),\\
\text{\textbullet} \;\;\forall\; i \in [n+1]: (u^i,-v^i) \in N\left((\bar{x},\bar{y}^i), \gph F \right),\\
\text{\textbullet} \;\; \sum\limits_{i=1}^{n+1} \lambda_i = 1.
\end{cases}
\end{equation*} 
Next, we set
 $\bar{u}^i:= \lambda_i u^i $ and $\bar{v}^i:= \lambda_i v^i$ for $i \in [n+1].$ 
 As there exists $i\in[n+1]$ with $\lambda_i\neq 0$ and as $v^j\in B$ and thus $v^j \not=0$ for all $j\in[n+1]$,
 the above implies
\begin{equation*}
    \begin{cases}
\exists \left\{\bar{y}^1,\ldots \bar{y}^{n+1}\right\} \subseteq \Min(F(\bar{x}),K),\; \left\{\bar{v}^1,\ldots, \bar{v}^{n+1}\right\} \subseteq K^*, \;\left\{\bar{u}^1,\ldots, \bar{u}^{n+1}\right\} \subseteq \R^n: \\
\text{\textbullet} \;\; 0 \in \sum\limits_{i=1}^{n+1} \bar{u}^i + N(\bar{x},\Omega),\\
\text{\textbullet} \;\;\forall\; i \in [n+1]: (\bar{u}^i,-\bar{v}^i) \in N\left((\bar{x},\bar{y}^i), \gph F \right),\\
\text{\textbullet} \;\; (\bar{v}^1,\ldots,\bar{v}^{n+1}) \neq 0,
\end{cases}
\end{equation*} which can be rewritten as 
\begin{equation*}
    \begin{cases}
\exists \left\{\bar{y}^1,\ldots \bar{y}^{n+1}\right\} \subseteq \Min(F(\bar{x}),K),\; \left\{\bar{v}^1,\ldots, \bar{v}^{n+1}\right\} \subseteq K^*: \\
\text{\textbullet} \;\; 0 \in \begin{pmatrix} 0 \\ \bar{v}^1\\ \vdots \\\bar{v}^{n+1}\end{pmatrix} + \sum\limits_{i=1}^{n+1} \left\{ \left(u,0,\ldots,\smash[b]{\smallunderbrace{v}_{\mathclap{(i + 1)^{th} \textrm{ term}} }},\ldots,0\right) \mid \left(u,v\right) \in N\left( (\bar{x},\bar{y}^i ),\gph F\right)\right\} + N(\bar{x},\Omega) \times \{0\},\\
\text{\textbullet} \;\; (\bar{v}^1,\ldots,\bar{v}^{n+1}) \neq 0.
\end{cases}
\end{equation*} Next, according to Lemma \ref{lem:ncintGi}, it follows that 
\begin{equation*}
    \begin{cases}
\exists \left\{\bar{y}^1,\ldots \bar{y}^{n+1}\right\} \subseteq \Min(F(\bar{x}),K),\; \left\{\bar{v}^1,\ldots, \bar{v}^{n+1}\right\} \subseteq K^*: \\
\text{\textbullet} \;\; 0 \in \begin{pmatrix} 0 \\ \bar{v}^1\\ \vdots \\\bar{v}^{n+1} \end{pmatrix} + N\left( (\bar{x},\bar{y}^1,\ldots,\bar{y}^{n + 1} ),\gph F^{n + 1}\right) + N(\bar{x},\Omega) \times \{0\},\\
\text{\textbullet} \;\; (\bar{v}^1,\ldots,\bar{v}^{n+1}) \neq 0.
\end{cases}
\end{equation*}
Applying now Lemma \ref{lem:ncintgphFomega}, the above is equivalent to
\begin{equation}\label{eq:preKKT}
    \begin{cases}
\exists \left\{\bar{y}^1,\ldots \bar{y}^{n+1}\right\} \subseteq \Min(F(\bar{x}),K),\; \left\{\bar{v}^1,\ldots, \bar{v}^{n+1}\right\} \subseteq K^*: \\
\text{\textbullet} \;\; 0 \in \begin{pmatrix} 0 \\ \bar{v}^1\\ \vdots \\\bar{v}^{n+1}\end{pmatrix} + N\left( (\bar{x},\bar{y}^1,\ldots,\bar{y}^{n + 1} ),\gph F^{n + 1} \cap \left(\Omega \times \prod\limits_{i=1}^{n + 1}\R^m\right)\right),\\
\text{\textbullet} \;\; (\bar{v}^1,\ldots,\bar{v}^{n+1}) \neq 0.
\end{cases}
\end{equation} Finally, taking into account that
\begin{equation*}
\nabla f^{n+1}(\bar{x},\bar{y}^1,\ldots,\bar{y}^{n+1}) = \begin{pmatrix}
0_{n\times q}  \\
\mathcal{I}_{q} 
\end{pmatrix}, \; q:= m(n+1),
\end{equation*} we deduce from \eqref{eq:preKKT} the existence of  $\left\{\bar{y}^1,\ldots \bar{y}^{n+1}\right\} \subseteq \Min(F(\bar{x}),K)$ and a vector $\bar{w}:= \left(\bar{v}^1,\ldots, \bar{v}^{n+1}\right) \in  \left(K^{n + 1}\right)^* \setminus \{0\}$ such that 
$$0 \in \nabla f^{n+1}(\bar{x},\bar{y}^1,\ldots,\bar{y}^{n+1}) \bar{w} + N\left( (\bar{x},\bar{y}^1,\ldots,\bar{y}^{n + 1} ),\gph F^{n + 1} \cap \left(\Omega \times \prod\limits_{i=1}^{n + 1}\R^m\right)\right).$$
Then, in particular, we find that the point $(\bar{x}, \bar{y}^1,\ldots, \bar{y}^{n + 1})$ satisfies a standard first order necessary and sufficient optimality condition (see \cite[Theorem 4.14]{MordukhovichNam2014path}) for the convex optimization problem 
\begin{equation}\label{eq:cvp}
\begin{array}{ll}
\min\limits_{x, y^1,\ldots,y^{n + 1}} &\;  \bar{w}^\top f^{n+1}(x, y^1,\ldots, y^{n + 1})  \\
\;\;\;\;\;\textup{s.t.} & (x, y^1,\ldots, y^{n + 1}) \in \gph F^{n + 1} \cap \left(\Omega \times \prod\limits_{i=1}^{n + 1}\R^m\right).\\
\end{array}  
\end{equation} Thus, $(\bar{x}, \bar{y}^1,\ldots, \bar{y}^{n + 1})$ is an optimal solution of \eqref{eq:cvp}. This, together with the fact that  \eqref{eq:cvp} is a weighted sum scalarized problem for $(\mathcal{VP}_{n+1})$ and that $\bar{w} \in \left(K^{n + 1}\right)^* \setminus \{0\},$ allow us to apply \cite[Theorem 5.28]{Jahn2011} to obtain that $(\bar{x},\bar{y}^1,\ldots,\bar{y}^{n+1}) \in \wargmin (\mathcal{VP}_{n+1}).$  The proof is complete.
\end{proof}

\section{Conclusions}
\label{section:Conclusions}
This paper introduces, for the first time, a practical methodology for solving general nonconvex set optimization problems with respect to the set approach. The parametric family of multiobjective subproblems presented is able to completely describe or at least approximate the set optimization problem with arbitrary precision while maintaining useful properties such as convexity. Consequently, our results significantly reduce the complexity of many classes of set optimization problems that usually arise both in the literature and in applications. Finally, many tools and techniques from set optimization with respect to the vector approach can now be applied in this setting. Thus, further research in this direction is of interest.

\section*{Funding}
The research of the second author is funded by the  German Federal Ministry for Economic Affairs and Energy (BMWi) under grant 03EI4013B. The research of the third author is funded by  DFG under grant no. 392195690. 

\bibliographystyle{siam}
\bibliography{references}
\end{document}